\numberwithin{equation}{section}
\numberwithin{figure}{section}
\theoremstyle{plain} \newtheorem{theorem}{Theorem}[section]
\theoremstyle{plain} \newtheorem{proposition}[theorem]{Proposition}
\theoremstyle{plain} \newtheorem{lemma}[theorem]{Lemma}
\theoremstyle{plain} \newtheorem{corollary}[theorem]{Corollary}
\theoremstyle{definition} \newtheorem{definition}[theorem]{Definition}
\theoremstyle{definition} 
\theoremstyle{remark} \newtheorem{remark}[theorem]{Remark}
\theoremstyle{remark} \newtheorem{example}[theorem]{Example}
\newcommand{\E}{\mathsf{E}}
\renewcommand{\P}{\mathsf{P}}
\newcommand{\Q}{\mathsf{Q}}
\newcommand\cE{{\mathcal E}}
\newcommand\cF{{\mathcal F}}
\newcommand\cL{{\mathcal L}}
\newcommand\cB{{\mathcal B}}
\newcommand\cH{{\mathcal H}}
\newcommand\cM{{\mathcal M}}
\newcommand\cI{{\mathcal I}}
\newcommand\cQ{{\mathcal Q}}
\newcommand{\XX}{\mathfrak{X}}
\newcommand{\R}{\mathbb{R}}
\newcommand{\Lnot}[1][0]{\cL^#1}
\newcommand{\slimsup}{s\text{-}\limsup}
\renewcommand{\subseteq}{\subset}
\newcommand{\bX}{{\mbf X}}
\newcommand{\bK}{{\mbf K}}
\renewcommand{\bX}{\bX}
\let\Xi\varXi
\renewcommand{\epsilon}{\varepsilon}
\newcommand{\eps}{\varepsilon}
\renewcommand{\bX}{X}
\renewcommand{\bK}{K}
\DeclareMathOperator{\Gr}{Gr}
\DeclareMathOperator{\cl}{cl}
\DeclareMathOperator{\co}{co}
\DeclareMathOperator{\cone}{cone}
\DeclareMathOperator{\cm}{\mathbf{m}}
\DeclareMathOperator{\CM}{\mathbf{M}}
\DeclareMathOperator{\Int}{int}
\DeclareMathOperator{\epi}{epi}
\newcommand{\esssup}[1][\cH]{\mathrm{ess\,sup}_{#1}}
\newcommand{\essinf}[1][\cH]{\mathrm{ess\,inf}_{#1}}
\newcommand{\one}{{\mathbf{1}}}
\newcommand{\eqdef}{=}
\newlength{\querylen}
\begin{document}

\title[Core and convex hulls]{Conditional cores and conditional convex hulls
  of random sets}

\author[E. Lepinette]{Emmanuel Lepinette}
\address{CEREMADE, Paris-Dauphine University\\
  Place du Mar\'echal De Lattre De Tassigny, \\
  75775 Paris cedex 16, France}
\email{emmanuel.lepinette@ceremade.dauphine.fr}

\author[I. Molchanov]{Ilya Molchanov}
\address{University of Bern \\
 Institute of Mathematical Statistics and Actuarial Science \\
 Sidlerstrasse 5 \\
 CH-3012 Bern \\
 SWITZERLAND}
\email{ilya.molchanov@stat.unibe.ch}

\thanks{IM supported by the Swiss National Science Foundation Grant
  200021-153597.} 

\subjclass[2010]{49J53, 60D05, 26E25, 28B20, 60B11} 

\keywords{random set, selection, set-valued expectation, sublinear
  expectation, superlinear expectation, support function, essential
  supremum}

\date{\today}

\begin{abstract}
  We define two non-linear operations with random (not necessarily closed) sets
  in Banach space: the conditional core and the conditional convex
  hull. While the first is sublinear, the second one is superlinear
  (in the reverse set inclusion ordering). Furthermore, we introduce
  the generalised conditional expectation of random closed sets and
  show that it is sandwiched between the conditional core and the
  conditional convex hull. The results rely on measurability
  properties of not necessarily closed random sets considered from the
  point of view of the families of their selections. Furthermore, we
  develop analytical tools suitable to handle random convex (not
  necessarily compact) sets in Banach spaces; these tools are based on
  considering support functions as functions of random arguments. 
  The paper is motivated by applications to assessing
  multivariate risks in mathematical finance.
\end{abstract}

\maketitle

\section{Introduction}
\label{sec:introduction}

Each almost surely non-empty random closed set $X$ (see definition in
Section~\ref{sec:graph-meas-rand}) in a Banach space admits a
measurable \emph{selection}, that is, a random element $\xi$ that
almost surely belongs to $X$. Moreover, $X$ equals the closure of a
countable family of its selections, called a \emph{Castaing
  representation} of $X$, see \cite{mo1}. If at least one selection is
Bochner integrable, then $X$ becomes the closure of a countable family
of \emph{integrable} selections, and the (selection)
\emph{expectation} $\E X$ of $X$ is defined as the closure of the set
of expectations for its all integrable selections, see
\cite{hia:ume77,mo1}.

Almost surely deterministic selections (if they exist) constitute the
set of \emph{fixed points} of $X$; this set is always a subset of $\E
X$. The union of supports of all selections is a superset of $\E X$;
it can be regarded as the \emph{support} of $X$.

In this paper we work out conditional variants of these concepts which
also apply to not necessarily closed grah measurable random
sets. Given a probability space $(\Omega,\cF,\P)$ and a
sub-$\sigma$-algebra $\cH$ of $\cF$, we introduce the concept of the
\emph{conditional core} $\cm(X|\cH)$, which relies on considering
selections measurable with respect to $\cH$. If $\cH$ is trivial, then
the conditional core becomes the set of fixed points of $X$; it is
also related to the essential intersection considered in
\cite{hes:ser:choir14}. The conditional core corresponds to the
concept of the conditional essential supremum (infimum) for a family
of random variables, see \cite{bar:car:jen03}.

If $X$ is a.s. convex, its conditional core can be obtained by taking
the conditional essential infimum of its support function. Taking the
conditional essential maximum leads to the dual concept of the
\emph{conditional convex hull} $\CM(X|\cH)$. While the conditional core of
the sum of sets is a superset of the sum of their conditional cores,
the opposite inclusion holds for the conditional convex hull. In other
words, the conditional core and the conditional convex hull are
\emph{non-linear} set-valued expectations.

The conventional conditional (selection) expectation $\E(X|\cH)$ is a
well-known concept for integrable random closed sets, see
\cite{hes02,hia:ume77}. We introduce a \emph{generalised} conditional
expectation $\E^g(X|\cH)$ based on working with the set of generalised
conditional expectation of all selections and show how it relates to
the conventional one. In particular, it is shown that $\E^g(X|\cH)=X$
is $X$ is $\cH$-measurable, no matter if $X$ is integrable or not.

The presented results are motivated by applications in mathematical
finance, where multiasset portfolios are represented as sets and their
\emph{risks} are also set-valued, see
\cite{ham:hey10,ham:rud:yan13,cas:mol14}. Working in the dynamic
setting requires a better understanding of conditioning operation with
random sets, and its iterative properties. In this relation, the
conditional core provides a simple \emph{conditional risk measure}
which generalises the concept of the essential infimum for multiasset
portfolios. In order to make a parallel with classical financial
concept (where real-valued risk measures are sublinear), the sets are
ordered by \emph{reverse} inclusion, so that the conditional core is
sublinear and the conditional convex hull is superlinear.

Section~\ref{sec:decomp-meas-vers} introduces random sets, their
selections and treats various measurability issues, in particular, it
is shown that each closed set-valued map admits a measurable
version. A special attention is devoted to the decomposability and
infinite decomposability properties, which are the key concepts
suitable to relate families of random vectors and selections of random
sets.

Section~\ref{sec:random-convex-sets} develops various analytical tools
suitable to handle random \emph{convex} sets. Random \emph{compact}
convex sets in Euclidean space can be efficiently explored by passing
to their support functions, and the same tool works well for weakly
compact sets in separable Banach spaces. Otherwise (e.g., for
unbounded closed sets in Euclidean space), the support function is
only lower semicontinuous and may become a non-separable random
function on the dual space. For instance, the support function of a
random half-space in Euclidean space with an isotropic normal almost
surely vanishes on all deterministic arguments. We show that this
complication can be circumvented by viewing the support function as a
function applied to \emph{random} elements in the dual space. In
particular, we prove a random variant of the 
well-known result saying that a closed convex set is given
by intersection of a countable number of half-spaces.
It is also shown that random convex closed sets can be alternatively
described as measurable epigraphs of their support functions, thereby
extending the fact known in the Euclidean setting for compact random
sets to all random convex closed sets in separable Banach spaces.

Section~\ref{sec:conditional-core} introduces and elaborates the
properties of conditional cores of random sets. Given a
sub-$\sigma$-algebra $\cH$, the conditional core $\cm(X|\cH)$ is the
largest $\cH$-measurable random closed set contained in $X$. While we
work with random sets in Banach spaces, the conditional core may be
defined for random sets in general Polish spaces. In linear spaces,
the conditional core is subadditive for the reverse inclusion, that is
the core of the sum of two random sets is a superset of the sum of
their conditional cores. 

While the conditional core is the largest random set contained in the
given one and is measurable with respect to a sub-$\sigma$-algebra
$\cH$, the conditional convex hull $\CM(X|\cH)$ is a smallest
$\cH$-measurable random convex closed set containing $X$.
Section~\ref{sec:cond-conv-hull} establishes the existence of the
conditional convex hull. It is shown that the support function of the
conditional convex hull is given by the essential supremum of the
support function of $X$. Duality relationships between the core and
the convex hull are also obtained. 

Section~\ref{sec:expect-cond-expect} introduces the concept of a
generalised conditional expectation for random sets and shows that it
is sandwiched between the conditional core and the conditional convex
hull. By taking the intersection (or closed convex hull) of generalised
conditional expectations with respect to varying probability measures,
it is possible to obtain a rich collection of conditional set-valued
non-linear expectations. 

Random convex cones have a particular property that their support
functions either vanish or are infinite.
Conditional cores and convex hulls of random convex cones are
considered in Section~\ref{sec:random-cones}. 

Some useful facts about conditional essential supremum and infimum of
random variables and the generalised conditional expectation are
collected in the appendices.

\section{Decomposability and measurable versions}
\label{sec:decomp-meas-vers}

\subsection{Graph measurable random sets and their selections}
\label{sec:graph-meas-rand}

Let $\XX$ be a separable (real) Banach space with norm $\|\cdot\|$ and
the Borel $\sigma$-algebra $\cB(\XX)$ generated by its strong
topology. The norm-closure of a set $A\subset\XX$ is denoted by $\cl
A$ and the interior by $\Int A$.

Fix a complete probability space $(\Omega,\cF,\P)$.  Let $\cH$ be a
sub-$\sigma$-algebra of $\cF$, which may coincide with $\cF$. Denote
by $\Lnot[p](\XX,\cH)$ the family of $\cH$-measurable random elements
in $\XX$ with $p$-integrable norm for $p\in[1,\infty)$, essentially
bounded if $p=\infty$, and all random elements if $p=0$.
The closure in the strong topology in $\Lnot[p]$ for $p\in[1,\infty)$
is denoted by $\cl_p$ and $\cl_0$ is the closure in probability for
$p=0$. If $p=\infty$, the closure is considered in the
$\sigma(\Lnot[\infty],\Lnot[1])$-topology.

An \emph{$\cH$-measurable random set} (shortly, random set) is a
set-valued function $\omega\mapsto X(\omega)\subseteq \XX$ from
$\Omega$ to the family of all subsets of $\XX$, such that its graph
\begin{equation}
  \label{eq:5}
  \Gr X\eqdef\{(\omega,x)\in \Omega\times \XX:
  x\in X(\omega)\}
\end{equation}
belongs to the product $\sigma$-algebra $\cH\otimes \cB(\XX)$; in this
case $X$ is often called \emph{graph measurable}, see
\cite[Sec.~1.2.5]{mo1}. Unless otherwise stated, by the measurability we always
understand the measurability with respect to $\cF$. The random set $X$
is said to be closed (convex, open) if $X(\omega)$ is a closed
(convex, open) set for almost all $\omega$. If $X$ is closed, then
\eqref{eq:5} holds if and only if $X$ is \emph{Effros measurable},
that is $\{\omega:\; X(\omega)\cap G\neq\emptyset\}\in\cF$ for each
open set $G$, see \cite[Def.~1.2.1]{mo1}. 

\begin{definition}
  An $\cH$-measurable random element $\xi$ such that
  $\xi(\omega)\in X(\omega)$ for almost all $\omega\in\Omega$ is
  said to be an $\cH$-measurable \emph{selection} (selection in short)
  of $X$, $\Lnot(X,\cH)$ denotes the family of all
  $\cH$-measurable selections of $X$, and $\Lnot[p](X,\cH)$
  is the family of $p$-integrable ones for $p\in[1,\infty]$.
\end{definition}

It is known that each almost surely non-empty random set has at least
one selection, see \cite[Th.~4.4]{hes02}.

\begin{lemma} 
  \label{ApproxSelector}
  Let $\{\xi_n,n\geq1\}$ be a sequence from $\Lnot(\XX,\cF)$, so
  that $X(\omega)=\cl\{\xi_n(\omega),n\geq1\}$ is a random closed
  set.  Let $\xi\in \Lnot(X,\cF)$. Then, for each $\eps>0$, there
  exists a measurable partition $A_1,\dots,A_n$ of $\Omega$ such that
  \begin{displaymath}
    \E\Big[\big\| \xi-\sum_{i=1}^n \one_{A_i}\xi_i\big\|\wedge 1\Big]
    \le \epsilon.
  \end{displaymath}
\end{lemma}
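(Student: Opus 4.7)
The plan is to convert the pointwise approximation of $\xi(\omega)$ by the family $\{\xi_k(\omega)\}_{k\ge 1}$ into an approximation by a single random element $\sum_{k=1}^{n}\one_{A_k}\xi_k$ that is measurable with respect to some finite partition. The error will be controlled by $\delta$ on a ``good'' set where $\xi(\omega)$ is within $\delta$ of one of the first $n$ members of the sequence, and it will be truncated by $1$ on the small remaining ``bad'' set, whose probability is made small by a continuity-of-measure argument.

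Concretely, I would fix $\delta\in(0,\eps/2)$ and set
\begin{displaymath}
  B_k \eqdef \{\omega\in\Omega:\|\xi(\omega)-\xi_k(\omega)\|<\delta\}\in\cF,\qquad k\ge 1,
\end{displaymath}
whose measurability follows from that of $\xi$ and $\xi_k$ and the continuity of the norm. Since $\xi(\omega)\in X(\omega)=\cl\{\xi_k(\omega),k\ge 1\}$ for almost every $\omega$, one has $\P\bigl(\bigcup_{k\ge 1}B_k\bigr)=1$, and continuity of $\P$ yields an integer $n$ with $\P(R)<\delta$, where $R\eqdef\Omega\setminus\bigcup_{k=1}^{n}B_k$.

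Next, I would disjointify $B_1,\dots,B_n$ and absorb the tail set $R$ into the first block by setting
\begin{displaymath}
  A_1 \eqdef B_1\cup R,\qquad A_k \eqdef B_k\setminus\bigcup_{i<k}B_i\text{ for }2\le k\le n.
\end{displaymath}
These are measurable and disjoint, and they cover $\Omega$. On $A_k\setminus R$ we have $\omega\in B_k$, so $\|\xi(\omega)-\xi_k(\omega)\|<\delta$, while on $R\subseteq A_1$ the truncated integrand $\|\xi-\xi_1\|\wedge 1$ is bounded by $1$. Summing the contributions gives
\begin{displaymath}
  \E\Big[\big\|\xi-\sum_{k=1}^{n}\one_{A_k}\xi_k\big\|\wedge 1\Big]\le \delta+\P(R)<2\delta<\eps.
\end{displaymath}

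The argument is a routine measure-theoretic truncation, so no significant obstacle is expected. The only delicate point is to arrange that $\{A_1,\dots,A_n\}$ is a partition of the \emph{whole} of $\Omega$ rather than of the favourable part $\bigcup_{k=1}^{n}B_k$; this is achieved by dumping the residual set $R$ into $A_1$ at the cost of an extra $\P(R)<\delta$ in the estimate, which is harmless thanks to the $\wedge\,1$ truncation.
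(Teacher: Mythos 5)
Your proof is correct and follows essentially the same route as the paper: both construct the sets where $\xi$ is within $\delta$ of some $\xi_k$, disjointify into a countable (resp.\ finite) partition, truncate at a finite index $n$, and absorb the residual set into $A_1$, using the $\wedge\,1$ truncation to bound its contribution by its (small) probability. The only cosmetic difference is that the paper chooses $n$ so that the expectation over the tail is at most $\eps/2$ directly, whereas you bound it via $\P(R)<\delta$; these are interchangeable since the integrand is bounded by $1$.
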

\begin{proof}
  Consider a measurable countable partition
  $\{B_i, i\geq1\}$ of $\Omega$, such that $\|\xi-\xi_i\|<\epsilon/2$
  on $B_i$ and choose a large enough $n$, so that
  \begin{displaymath}
    \E\Big[\one_{\cup_{i\ge n+1} B_i}\|\xi-\xi_1\|\wedge 1\Big]\le \epsilon/2.
  \end{displaymath}
  Define $A_1\eqdef B_1\cup \left( \cup_{i\ge n+1} B_i\right)$ and
  $A_i\eqdef B_i$ for $i=2\,\dots,n$.  Since the mapping $x\mapsto x\wedge
  1$ is increasing,
  \begin{displaymath}
    \E\Big[\big\| \xi-\sum_{i=1}^n \one_{A_i}\xi_i\big\|\wedge 1\Big]\le
    \E\Big[\big(\sum_{i=1}^n\|\xi-\xi_i\|\one_{A_i}\big)\wedge 1\Big].
  \end{displaymath}
  Then 
  \begin{multline*}
    \E\Big[\big\| \xi-\sum_{i=1}^n \one_{A_i}\xi_i\big\|\wedge 1\Big]
    \le \sum_{i=1}^n \E\big[\|\xi-\xi_i\|\one_{A_i}\wedge 1\big]\\
    \le \sum_{i=1}^n \E\big[\|\xi-\xi_i\|\one_{B_i}\wedge 1\big]
    +\E\big[\one_{\cup_{i\ge n+1} B_i}\|\xi-\xi_1\|\wedge 1\big]\le
    \epsilon. \qquad \qedhere
  \end{multline*}
\end{proof}

\begin{definition}
  A family $\Xi\subset \Lnot(\XX,\cF)$ is said to be
  \emph{$\cH$-decomposable} if, for each
  $\xi,\eta\in\Xi$ and each $A\in \cH$, the random element
  $\one_A\xi+\one_{A^c}\eta$ belongs to $\Xi$.  
\end{definition}

Decomposable subsets of $\Lnot(\R^d,\cF)$ are studied under the
name stable sets in \cite{cher:kup:vog14}.  The following result is
well known for $p=1$ \cite{hia:ume77}, for $p\in[1,\infty]$
\cite[Th.2.1.6]{mo1}, and is mentioned in
\cite[Prop.~5.4.3]{kab:saf09} without proof for $p=0$. We give below
the proof in the latter case and provide its variant for random
\emph{convex} sets.

\begin{theorem} 
  \label{ReprDecompL0} 
  Let $\Xi$ be a non-empty subset of $\Lnot[p](\XX,\cF)$ for $p=0$ or
  $p\in[1,\infty]$. Then $\Xi=\Lnot[p](X,\cF)$ for a random closed set
  $X$ if and only if $\Xi$ is $\cF$-decomposable and closed. The
  family $\Xi$ is convex (is a cone in $\Lnot[p](\XX,\cF)$) if and
  only if $X$ is convex (is a cone in $\XX$).
\end{theorem}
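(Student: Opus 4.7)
The plan is to dispense with the necessity quickly and focus the work on the sufficiency at $p=0$ and the convex/cone variant, with the intermediate cases $p\in[1,\infty]$ left to the cited literature. For necessity, if $\Xi=\Lnot[p](X,\cF)$ for a random closed set $X$, then for $\xi,\eta\in\Xi$ and $A\in\cF$ the pasting $\one_A\xi+\one_{A^c}\eta$ is a measurable selection of $X$ whose norm is dominated by $\|\xi\|+\|\eta\|$, so it lies in $\Xi$; this gives $\cF$-decomposability with the correct integrability. Closedness at $p=0$ follows since convergence in probability admits an a.s.\ convergent subsequence whose limit remains in the closed set $X(\omega)$ a.s. Pointwise convex combinations (positive scalings) of selections are selections when $X$ is a.s.\ convex (a cone), which transfers the property to $\Xi$.

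For sufficiency at $p=0$, the plan is to adapt the standard decomposable-closure construction. Equip $\Lnot[0](\XX,\cF)$ with the metric $d(\xi,\eta)\eqdef\E[\|\xi-\eta\|\wedge1]$ inducing convergence in probability; separability of $\XX$ yields separability of $\Lnot[0](\XX,\cF)$, so $\Xi$ admits a countable subfamily $\{\xi_n:n\ge1\}$ dense in $\Xi$ for $d$. Define
\[
X(\omega)\eqdef\cl\{\xi_n(\omega):n\ge1\}.
\]
Then $X$ is Effros measurable via $\{\omega:X(\omega)\cap G\neq\emptyset\}=\bigcup_n\{\xi_n\in G\}\in\cF$ for each open $G\subset\XX$, hence graph measurable. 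For $\xi\in\Xi$ choose $n_k$ with $\xi_{n_k}\to\xi$ in probability and pass to an a.s.\ convergent subsequence; closedness of $X(\omega)$ gives $\xi(\omega)\in X(\omega)$ a.s., so $\Xi\subset\Lnot[0](X,\cF)$. For the reverse inclusion, Lemma~\ref{ApproxSelector} supplies, for each $\eta\in\Lnot[0](X,\cF)$ and each $k$, a finite measurable partition $(A^k_i)_i$ and indices $(j^k_i)_i$ with $d\bigl(\eta,\sum_i\one_{A^k_i}\xi_{j^k_i}\bigr)\le1/k$; by iterated $\cF$-decomposability the approximant lies in $\Xi$, and closedness of $\Xi$ delivers $\eta\in\Xi$.

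For the convex variant, convexity of $X(\omega)$ a.s.\ transfers to $\Xi$ directly. Conversely, suppose $\Xi$ is convex but $X$ fails to be convex on a set of positive probability. By closedness of $X(\omega)$ and continuity, there is $\lambda\in(0,1)\cap\QQ$ for which the random set
\[
Y^\lambda(\omega)\eqdef\{(x,y)\in X(\omega)\times X(\omega):\lambda x+(1-\lambda)y\notin X(\omega)\}
\]
is graph measurable (immediate from graph measurability of $X$) and nonempty with positive probability. A measurable selection of $Y^\lambda$, extended by a fixed $\xi_0\in\Xi$ where $Y^\lambda$ is empty, produces $\xi,\eta\in\Xi$ whose convex combination $\lambda\xi+(1-\lambda)\eta$ is not in $\Lnot[0](X,\cF)=\Xi$, a contradiction. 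The cone case is analogous with positive rational scalars. The main technical obstacle is the existence of the dense sequence $\{\xi_n\}$ without separability of $(\Omega,\cF,\P)$; in full generality this is handled by a saturation argument, iteratively pasting elements of $\Xi$ onto a maximal countable skeleton via $\cF$-decomposability so that its pointwise closure recovers $\Xi$.
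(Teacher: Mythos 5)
Your necessity argument and the inclusion $\Lnot[0](X,\cF)\subseteq\Xi$ via Lemma~\ref{ApproxSelector} are fine, and your measurable-selection argument for transferring convexity from $\Xi$ back to $X$ is a legitimate (if different) route. But the central construction for $p=0$ has a genuine gap: the claim that separability of $\XX$ yields separability of $(\Lnot[0](\XX,\cF),d)$ is false. Separability of $\Lnot[0]$ requires the measure algebra of $(\Omega,\cF,\P)$ to be separable, which the theorem does not assume. For instance, on $\Omega=\{0,1\}^{[0,1]}$ with the product Bernoulli measure, the coordinate variables form an uncountable family that is pairwise at $d$-distance $1/2$, so $\Lnot[0](\R,\cF)$ is not separable and no countable $d$-dense subfamily of a general $\Xi$ exists. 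Your closing remark that this is ``handled by a saturation argument, iteratively pasting elements onto a maximal countable skeleton'' is not a proof: there is no obvious maximal countable subfamily (unions of chains of countable sets need not be countable), and no counting argument is offered to show the process terminates at a countable stage. This is exactly the hard part of the theorem, and it is the part your proposal leaves open.

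The paper closes this gap without any separability of $\Lnot[0]$: it takes a countable dense set $\{x_i\}\subseteq\XX$, sets $a_i\eqdef\inf_{\eta\in\Xi}\E[\|\eta-x_i\|\wedge 1]$, and chooses \emph{near-minimizers} $\eta_{ij}\in\Xi$ with $\E[\|\eta_{ij}-x_i\|\wedge 1]\le a_i+j^{-1}$, defining $X(\omega)=\cl\{\eta_{ij}(\omega)\}$. The point is that $\{\eta_{ij}\}$ need not be $d$-dense in $\Xi$; it only needs to be \emph{pointwise} dense in $X$, and decomposability forces this: if some $\xi\in\Xi$ stayed at distance $>\delta$ from every $\eta_{ij}$ on a set $A$ of positive measure, then on $B_i=A\cap\{\|\xi-x_i\|\wedge 1<\delta/3\}$ the pasting $\eta'_{ij}=\xi\one_{B_i}+\eta_{ij}\one_{B_i^c}\in\Xi$ would beat the infimum $a_i$ by at least $\tfrac{\delta}{3}\P(B_i)$ uniformly in $j$, contradicting $a_i\le\E[\|\eta_{ij}'-x_i\|\wedge1]$ and $\E[\|\eta_{ij}-x_i\|\wedge1]\le a_i+j^{-1}$ as $j\to\infty$. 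That variational use of decomposability is the missing idea; with it, the convex and cone cases follow by replacing $X$ with the closed convex hull (resp.\ closed cone) generated by the $\eta_{ij}$, which is somewhat shorter than your separation-by-measurable-selection argument, though the latter also works once the main equality $\Xi=\Lnot[0](X,\cF)$ is established.
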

\begin{proof}
  The necessity is trivial. Let $p=0$ and assume that $\Xi$ is
  $\cF$-decomposable and closed.  Consider a countable dense set
  $\{x_i,i\geq1\}\subset\XX$ and define
  \begin{displaymath}
    a_i\eqdef\inf_{\eta\in \Xi}\E\left[\|\eta-x_i\|\wedge 1\right],\quad i\geq1.
  \end{displaymath}
  For all $i,j\geq1$, there exists an $\eta_{ij}\in \Xi$ such that
  \begin{displaymath}
    \E\left[\|\eta_{ij}-x_i\|\wedge 1\right]\le a_i+j^{-1}.
  \end{displaymath}
  Define $X(\omega)\eqdef\cl\{\eta_{ij}(\omega),i,j\geq1\}$ for all
  $\omega\in \Omega$. Since $\Xi$ is decomposable and closed,
  $\Lnot(X,\cF)\subseteq \Xi$ by Lemma~\ref{ApproxSelector}. If $\Xi$
  is convex or is a cone, the same inclusion holds for $X$ being the
  closed convex hull of $\{\eta_{ij}(\omega),i,j\geq1\}$ or the closed
  cone generated by these random elements.

  Assume that there exists a $\xi\in \Xi$, which does not belong to
  $\Lnot(X,\cF)$. Then there exists a $\delta\in (0,1)$ such that
  \begin{displaymath}
    A=\bigcap_{i,j\geq1}\{\|\xi-\eta_{ij}\|\wedge 1>\delta\}
  \end{displaymath}
  has positive probability.  Since $\Omega=\cup_i\{\|\xi-x_i\|\wedge
  1<\delta/3\}$, the event $B_i\eqdef A\cap \{\|\xi-x_i\|\wedge
  1<\delta/3\}$ has a positive probability for some $i\geq1$.  Recall
  that
  \begin{displaymath}
    \|a-b\|\wedge 1\le \|a-c\|\wedge 1+\|c-b\|\wedge 1.
  \end{displaymath}
  Then, on the set $B_i$,
  \begin{displaymath}
    \|x_i-\eta_{ij}\|\wedge 1\ge \|\xi-\eta_{ij}\|
    \wedge 1-\|\xi-x_i\|\wedge 1\ge \frac{2\delta}{3}.
  \end{displaymath}
  Furthermore, $\eta'_{ij}\eqdef\xi\one_{B_i}+\eta_{ij}\one_{B^c_i}\in\Xi$
  by decomposability, and 
  \begin{multline*}
    j^{-1}\geq \E \big[ \|\eta_{ij}-x_i\|\wedge 1\big]-a_i
    \ge \E\big[\|\eta_{ij}-x_i\|\wedge 1\big]-\E\big[\|\eta'_{ij}-x_i\|\wedge 1\big]\\
    \ge \E\Big[\left(\|\eta_{ij}-x_i\|\wedge 1-\|\xi-x_i\|\wedge
      1\right)\one_{B_i}\Big]\ge \frac{\delta}{3}\P(B_i).
  \end{multline*}
  Since $B_i$ and $\delta$ do not depend on $j$, letting $j\to\infty$
  yields a contradiction.
\end{proof}

\begin{corollary}
  \label{cor:H-dec}
  If $\Xi\subset\Lnot[p](\XX,\cF)$ with $p=0$ or $p\in[1,\infty]$ is
  closed and $\cH$-decomposable, then there exists an $\cH$-measurable
  random closed set $X$ such that 
  \begin{displaymath}
    \Xi\cap\Lnot[p](\XX,\cH)=\Lnot[p](X,\cH).
  \end{displaymath}
\end{corollary}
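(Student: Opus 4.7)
The plan is to reduce to Theorem~\ref{ReprDecompL0} by restricting to the probability space $(\Omega,\cH,\P|_{\cH})$. Set
\begin{displaymath}
  \Xi' \eqdef \Xi\cap\Lnot[p](\XX,\cH),
\end{displaymath}
and show that $\Xi'$ satisfies, in the $\cH$-measurable world, the hypotheses of Theorem~\ref{ReprDecompL0} with $\cH$ playing the role of $\cF$; the conclusion then furnishes the desired $\cH$-measurable random closed set $X$.

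First, I would verify that $\Xi'$ is closed inside $\Lnot[p](\XX,\cH)$. Since $\Lnot[p](\XX,\cH)$ is closed in $\Lnot[p](\XX,\cF)$ (strong closure, closure in probability, or weak-$\ast$ closure for $p=\infty$, respectively), and $\Xi$ is closed in $\Lnot[p](\XX,\cF)$ by assumption, the intersection $\Xi'$ is closed in both $\Lnot[p](\XX,\cF)$ and $\Lnot[p](\XX,\cH)$.

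Next, I would check $\cH$-decomposability of $\Xi'$ as a subfamily of $\Lnot[p](\XX,\cH)$: if $\xi,\eta\in\Xi'$ and $A\in\cH$, then $\one_A\xi+\one_{A^c}\eta$ is $\cH$-measurable (being a sum of $\cH$-measurable random elements multiplied by $\cH$-measurable indicators) and belongs to $\Xi$ by the $\cH$-decomposability of $\Xi$, hence lies in $\Xi'$. Once this is in hand, since every $\cH$-decomposable family is in particular decomposable with respect to the full $\sigma$-algebra of the ambient space $(\Omega,\cH,\P|_{\cH})$, Theorem~\ref{ReprDecompL0} applied on $(\Omega,\cH,\P|_{\cH})$ yields an $\cH$-measurable random closed set $X$ such that $\Xi'=\Lnot[p](X,\cH)$, which is exactly the claimed identity.

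I do not expect a serious obstacle, as the argument is essentially a transfer of Theorem~\ref{ReprDecompL0} to the sub-$\sigma$-algebra $\cH$. The only minor point to watch is the standing completeness convention: one should either assume $\cH$ is augmented by the $\P$-null sets, or notice that the construction in the proof of Theorem~\ref{ReprDecompL0} only uses countably many selections, so the resulting $X$ is automatically $\cH$-measurable up to evanescent modifications and the identification of selection families is unaffected.
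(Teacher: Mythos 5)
Your proof is correct and matches the paper's (implicit) argument: the corollary is stated there without proof as an immediate consequence of Theorem~\ref{ReprDecompL0}, applied exactly as you do to $\Xi'=\Xi\cap\Lnot[p](\XX,\cH)$ viewed on the probability space $(\Omega,\cH,\P)$. The only details worth recording are the trivial empty case (take $X=\emptyset$, since Theorem~\ref{ReprDecompL0} assumes $\Xi$ non-empty) and, for $p=\infty$, that the relative $\sigma(\Lnot[\infty],\Lnot[1])$-topology on $\Lnot[\infty](\XX,\cH)$ coincides with the intrinsic one because testing against an integrable $\eta$ is the same as testing against $\E(\eta|\cH)$, so your closedness claim for $\Xi'$ is in the topology the theorem actually needs.
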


\begin{proposition} 
  \label{MeasurVersClos}
  If $X$ is a random set, then its pointwise closure
  $\cl X(\omega)$, $\omega\in\Omega$, is a random closed set, and
  $\Lnot(\cl X,\cF)=\cl_0\Lnot(X,\cF)$.
\end{proposition}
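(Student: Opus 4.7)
The plan is to establish the two assertions separately: first that $\cl X$ is graph measurable, and then the two inclusions between $\cl_0\Lnot(X,\cF)$ and $\Lnot(\cl X,\cF)$.

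For graph measurability of $\cl X$, since $\cl X$ has closed values, it suffices to check Effros measurability, i.e.\ that $\{\omega:\cl X(\omega)\cap G\neq\emptyset\}\in\cF$ for every open $G\subset\XX$; graph measurability then follows from the standard equivalence recalled in Section~\ref{sec:graph-meas-rand}. Because $G$ is open, $\cl X(\omega)\cap G\neq\emptyset$ iff $X(\omega)\cap G\neq\emptyset$, and the latter event is the projection onto $\Omega$ of $\Gr X\cap(\Omega\times G)\in\cF\otimes\cB(\XX)$. That projection is an analytic subset of $\Omega$, hence belongs to $\cF$ by completeness of $(\Omega,\cF,\P)$.

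For the inclusion $\cl_0\Lnot(X,\cF)\subseteq\Lnot(\cl X,\cF)$, given $\xi_n\in\Lnot(X,\cF)$ converging in probability to $\xi$, I would extract an almost surely convergent subsequence; since $\xi_{n_k}(\omega)\in X(\omega)\subseteq\cl X(\omega)$ a.s., the pointwise limit $\xi$ is a selection of $\cl X$.

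For the reverse inclusion, fix $\xi\in\Lnot(\cl X,\cF)$ and, for each $n\geq 1$, set
\[
Y_n(\omega)\eqdef X(\omega)\cap\{x\in\XX:\|x-\xi(\omega)\|<1/n\}.
\]
Then $\Gr Y_n=\Gr X\cap\{(\omega,x):\|x-\xi(\omega)\|<1/n\}$ lies in $\cF\otimes\cB(\XX)$ since $\xi$ is measurable, so $Y_n$ is graph measurable; and because $\xi(\omega)\in\cl X(\omega)$ a.s., $Y_n(\omega)\neq\emptyset$ a.s. Invoking the Aumann--von Neumann selection theorem, which applies to graph measurable multifunctions with a.s.\ non-empty values on a complete probability space, I obtain a measurable selection $\eta_n$ of $Y_n$. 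Then $\eta_n\in\Lnot(X,\cF)$ and $\|\eta_n-\xi\|\leq 1/n$ a.s., so $\eta_n\to\xi$ in probability, which gives $\xi\in\cl_0\Lnot(X,\cF)$.

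The main subtlety is this last step: because $Y_n$ need not be closed-valued, the classical Kuratowski--Ryll-Nardzewski theorem does not apply, and one must fall back on the projection-based Aumann--von Neumann selection argument. This is where completeness of $(\Omega,\cF,\P)$ is essential, and it is the same completeness assumption already used to handle the projection in the first part.
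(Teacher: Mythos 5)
Your proof is correct, and the first two steps (Effros measurability of $\cl X$ via the projection theorem, and extraction of an a.s.\ convergent subsequence for $\cl_0\Lnot(X,\cF)\subseteq\Lnot(\cl X,\cF)$) coincide with what the paper does. Where you genuinely diverge is the reverse inclusion $\Lnot(\cl X,\cF)\subseteq\cl_0\Lnot(X,\cF)$: you argue directly, intersecting $X$ with shrinking open balls around a given selection $\xi$ of $\cl X$ and invoking the von Neumann--Aumann selection theorem for the graph-measurable, a.s.\ non-empty sets $Y_n$ to produce $\eta_n\in\Lnot(X,\cF)$ with $\|\eta_n-\xi\|<1/n$. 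The paper instead observes that $\cl_0\Lnot(X,\cF)$ is closed and decomposable, applies Theorem~\ref{ReprDecompL0} to write it as $\Lnot(Y,\cF)$ for a random closed set $Y$, and deduces $\cl X\subseteq Y$ from $\Lnot(X,\cF)\subseteq\Lnot(Y,\cF)$. Your route is more hands-on and self-contained: it avoids the representation theorem for decomposable families and makes explicit the selection-theoretic content that the paper's step ``$\Lnot(X,\cF)\subseteq\Lnot(Y,\cF)$ implies $X\subseteq Y$ a.s.'' leaves implicit; the price is that you must check joint measurability of $(\omega,x)\mapsto\|x-\xi(\omega)\|$ (a Carath\'eodory function on a separable space, so this is fine) and appeal to the full Aumann-type selection theorem rather than only to decomposability. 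The paper's route is shorter once Theorem~\ref{ReprDecompL0} is available and fits its general strategy of reducing statements about random sets to statements about decomposable families of selections. Both arguments rely on completeness of $(\Omega,\cF,\P)$ in essentially the same places.
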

\begin{proof}
  Since the probability space is complete and the graph of $X$ is
  measurable in the product space, the projection theorem yields that
  $\{X\cap G\neq\emptyset\}\in\cF$ for any open set $G$. Finally,
  note that $X$ hits any open set $G$ if and only if
  $\cl X$ hits $G$. Thus, $\cl X$ is Effros
  measurable and so is a random closed set.

  The inclusion $\cl_0 \Lnot(X,\cF)\subseteq
  \Lnot(\cl X,\cF)$ obviously holds. Since $\cl_0
  \Lnot(X,\cF)$ is decomposable, there exists a random closed set
  $Y$ such that $\cl_0 \Lnot(X,\cF)=\Lnot(Y,\cF)$. Since
  $\Lnot(X,\cF)\subseteq \Lnot(Y,\cF)$, we have
  $X\subseteq Y$ a.s. Therefore, $\cl X\subseteq Y$
  a.s. and the conclusion follows.
\end{proof}

The following result is well known for random closed sets as a
\emph{Castaing representation}, see e.g. \cite[Th.~1.2.3]{mo1}.

\begin{proposition}
  \label{CountRepresentation}
  If $X$ is a non-empty random set, then there exists a countable family
  $\{\xi_i, i\geq1\}$ of measurable selections of $X$ such that
  \begin{displaymath}
    \cl X=\cl\{\xi_i, i\geq1\}\quad \text{a.s.}
  \end{displaymath}
\end{proposition}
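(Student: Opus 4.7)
The plan is to reduce to the known Castaing representation for random \emph{closed} sets by first closing $X$ pointwise, then pulling the resulting selections back into $X$ via approximation. Concretely, Proposition~\ref{MeasurVersClos} already tells us two things: $\cl X$ is a random closed set, and $\Lnot(\cl X,\cF)=\cl_0\Lnot(X,\cF)$. These will be the only inputs we need beyond the classical Castaing theorem.

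First, I would apply the classical Castaing representation (\cite[Th.~1.2.3]{mo1}, cited just above the statement) to $\cl X$ to obtain a countable family $\{\eta_j,\,j\geq1\}\subset\Lnot(\cl X,\cF)$ with $\cl X=\cl\{\eta_j,\,j\geq1\}$ almost surely. These $\eta_j$ are selections of $\cl X$ but not, in general, of $X$, so they cannot serve directly.

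Next, for each fixed $j$, the identity $\Lnot(\cl X,\cF)=\cl_0\Lnot(X,\cF)$ yields a sequence $\{\zeta_{j,k},\,k\geq1\}\subset\Lnot(X,\cF)$ converging in probability to $\eta_j$. Passing to a subsequence (depending on $j$), I may assume $\zeta_{j,k}\to\eta_j$ almost surely as $k\to\infty$. Carrying out this selection of subsequences for all $j\geq1$ on a single probability-one event (a countable intersection of a.s. events), I obtain a countable family $\{\zeta_{j,k},\,j,k\geq1\}$ of measurable selections of $X$.

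Finally, I would verify the two inclusions on this common almost sure event. Since each $\zeta_{j,k}$ lies in $X$, the inclusion $\cl\{\zeta_{j,k},\,j,k\geq1\}\subseteq\cl X$ is immediate. Conversely, $\eta_j\in\cl\{\zeta_{j,k},\,k\geq1\}\subseteq\cl\{\zeta_{j,k},\,j,k\geq1\}$ for every $j$, so $\cl X=\cl\{\eta_j,\,j\geq1\}\subseteq\cl\{\zeta_{j,k},\,j,k\geq1\}$, giving the reverse inclusion and hence the desired equality. Relabelling $\{\zeta_{j,k}\}$ as $\{\xi_i,\,i\geq1\}$ concludes the proof. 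The only subtle point, and the one I would be careful about, is ensuring that the almost sure convergence holds simultaneously for all $j$ and that the closure identity then holds on a single full-measure event; this is handled simply because countably many almost sure statements can be combined.
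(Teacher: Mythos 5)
Your proof is correct. It rests on the same two pillars as the paper's argument — the fact that $\cl X$ is a random closed set and the identity $\Lnot(\cl X,\cF)=\cl_0\Lnot(X,\cF)$ from Proposition~\ref{MeasurVersClos} — but the packaging is different. The paper reruns the construction inside the proof of Theorem~\ref{ReprDecompL0} with $\Xi=\Lnot(\cl X,\cF)$, observing that the infima $a_i=\inf_\eta\E[\|\eta-x_i\|\wedge 1]$ are unchanged when taken over the dense subfamily $\Lnot(X,\cF)$, so the near-minimizers $\eta_{ij}$ can be chosen directly as selections of $X$; the decomposability/contradiction machinery of that proof then forces $\cl\{\eta_{ij}\}=\cl X$. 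You instead treat the classical Castaing theorem for $\cl X$ as a black box and post-process its output, replacing each $\eta_j$ by an a.s.\ convergent sequence $\zeta_{j,k}\in\Lnot(X,\cF)$ and collecting everything on one full-measure event. Your route is more modular and arguably easier to read in isolation, at the cost of a two-stage diagonal bookkeeping step; the paper's route is terser because it recycles an argument already on the page. Both are valid, and your handling of the one delicate point — combining countably many almost sure statements so that all inclusions hold on a single event — is exactly right.
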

\begin{proof}
  It suffices to repeat the part of the proof of
  Theorem~\ref{ReprDecompL0} with $\Xi=\Lnot(\cl X,\cF)$ and
  observe that
  \begin{displaymath}
    a_i\eqdef\inf_{\eta\in
      \Lnot(X,\cF)}\E\|\eta-x_i\|\wedge 1=\inf_{\eta\in
      \Xi}\E\|\eta-x_i\|\wedge 1,\quad i\geq1.
  \end{displaymath}
  Indeed, by Proposition~\ref{MeasurVersClos}, $\Xi=\cl_0
  \Lnot(X,\cF)$.
\end{proof}

Proposition~\ref{CountRepresentation} yields that the norm 
\begin{displaymath}
  \|X\|=\sup\{\|x\|:\; x\in X\}
  =\sup\{\|\xi\|:\; \xi\in\Lnot(X,\cF)\}
\end{displaymath}
and the Hausdorff distance between any two random sets are random
variables with values in $[0,\infty]$.

The following result establishes the existence of a measurable version
for any closed-valued mapping.

\begin{proposition} 
  \label{MeasurVersClos1}
  For any closed set-valued mapping $X(\omega)$, $\omega\in\Omega$,
  there exists a random closed set $Y$ (called the measurable version
  of $X$) such that $\Lnot(X,\cF)=\Lnot(Y,\cF)$. If $X$ is convex
  (respectively, a cone), then $Y$ is also convex (respectively, a
  cone).
\end{proposition}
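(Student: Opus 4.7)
The plan is to reduce the existence of a measurable version to Theorem~\ref{ReprDecompL0} applied with $p=0$ to the family $\Xi \eqdef \Lnot(X,\cF)$ of all $\cF$-measurable selections of $X$. If $\Xi$ is empty, I take $Y\eqdef\emptyset$, which is trivially a random closed set with $\Lnot(Y,\cF)=\Lnot(X,\cF)$ and which inherits convexity and the cone property vacuously.

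Assuming $\Xi$ is non-empty, I would verify the two hypotheses of Theorem~\ref{ReprDecompL0}. For $\cF$-decomposability, given $\xi,\eta\in\Xi$ and $A\in\cF$, the element $\one_A\xi+\one_{A^c}\eta$ is $\cF$-measurable and, on the full-measure set where both $\xi$ and $\eta$ select $X$, takes values in $X(\omega)$. For closedness in $\Lnot[0](\XX,\cF)$, I would pick a sequence $\xi_n\in\Xi$ converging in probability to some $\xi$, extract an almost surely convergent subsequence, and invoke the a.s. closedness of $X(\omega)$ to deduce that $\xi(\omega)\in X(\omega)$ a.s., so $\xi\in\Xi$.

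With these two properties in hand, Theorem~\ref{ReprDecompL0} delivers an $\cF$-measurable random closed set $Y$ with $\Lnot(Y,\cF)=\Xi$, which is the desired measurable version. For the convex and cone variants, I would observe that if $X(\omega)$ is almost surely convex (respectively, a cone), then convex combinations (respectively, non-negative linear combinations) of selections remain pointwise in $X(\omega)$ a.s. and stay $\cF$-measurable, so $\Xi$ is convex (respectively, a cone in $\Lnot(\XX,\cF)$). The corresponding clause of Theorem~\ref{ReprDecompL0} then produces $Y$ convex (respectively, a cone).

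The only step where mild care is needed is the closedness check, since convergence in probability of selections must first be upgraded to almost sure convergence along a subsequence before the pointwise closedness of $X(\omega)$ can be exploited. Apart from this, the argument is formal, and, importantly, no a priori measurability assumption on the graph of $X$ itself is required.
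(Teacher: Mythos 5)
Your proposal is correct and follows essentially the same route as the paper: the paper's proof also reduces to Theorem~\ref{ReprDecompL0} with $p=0$ applied to $\Xi=\Lnot(X,\cF)$, asserting (without the details you supply) that this family is closed and decomposable, and handling the empty case separately. Your additional verifications of decomposability, of closedness via an almost surely convergent subsequence, and of the convexity/cone clauses are exactly the omitted routine checks.
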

\begin{proof}
  Assume that $\Lnot(X,\cF)$ is  non-empty, otherwise, the
  statement is evident with empty $Y$. Since
  $\Lnot(X,\cF)$ is closed and decomposable,
  Theorem~\ref{ReprDecompL0} ensures the existence of $Y$
  that satisfies the required conditions. 
\end{proof}

If $X_i$, $i\in I$, is an uncountable family of random sets,
then Proposition~\ref{MeasurVersClos1} makes it possible to define
measurable versions of the closure of their union or intersection. 

For $A,B\subset\XX$, define their pointwise sum as
\begin{displaymath}
  A+B\eqdef\{x+y:\; x\in A,y\in B\}\,.
\end{displaymath}
The same definition applies to the sum of subsets of
$\Lnot[p](\XX,\cF)$.
Note that the sum of two closed sets is not necessarily closed, unless
at least one summand is compact. 
If $X$ and $Y$ are two random closed sets, then the closure of $X+Y$
is a random closed set too, see \cite{mo1}.

\subsection{Infinite decomposability}
\label{sec:infin-decomp-1}

\begin{definition} 
  A family $\Xi\subseteq \Lnot(\XX,\cF)$ is said to be
  \emph{infinitely $\cH$-decomposable} if
  \begin{displaymath}
    \sum_n \xi_n\one_{A_n}\in \Xi
  \end{displaymath}
  for all sequences $\{\xi_n, n\geq1\}$ from $\Xi$ and all
  $\cH$-measurable partitions $\{A_n,n\geq1\}$ of $\Omega$.
\end{definition}

Infinitely $\cF$-decomposable subsets of $\Lnot(\R^d,\cF)$ are called
$\sigma$-stable in \cite{cher:kup:vog14}.  Taking a partition that
consists of two sets and letting all other sets be empty, it is
immediate that the infinite decomposability implies the
decomposability.  Observe that an infinitely decomposable family $\Xi$
is not necessarily closed in $\Lnot$, e.g., $\Xi=\Lnot(X,\cF)$ with a
non-closed $X$.  It is easy to see that if $\Xi$ is infinitely
decomposable, then its closure in $\Lnot$ is also infinitely
decomposable.

In Euclidean space, the sum of an open set $G$ and another set $M$
equals the sum of $G$ and the closure of $M$. The following result
shows that an analogue of this for subsets of $\Lnot(\XX,\cF)$ holds
under the infinite decomposability assumption.

\begin{proposition}
  \label{prop-open+decomposable} 
  Let $\Xi$ be an infinitely $\cF$-decomposable subset of
  $\Lnot(\XX,\cF)$, and let $X$ be an $\cF$-measurable
  a.s. non-empty random open set. Then
  \begin{displaymath}
    \Lnot(X,\cF)+\Xi=\Lnot(X,\cF)+\cl_0\Xi.
  \end{displaymath}
\end{proposition}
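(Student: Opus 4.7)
The inclusion $\Lnot(X,\cF)+\Xi\subseteq \Lnot(X,\cF)+\cl_0\Xi$ is immediate, so the plan is to fix $\xi\in\Lnot(X,\cF)$ and $\zeta\in\cl_0\Xi$ and produce a decomposition $\xi+\zeta=\xi'+\zeta'$ with $\xi'\in\Lnot(X,\cF)$ and $\zeta'\in\Xi$. The idea is to use the openness of $X$ to absorb a small perturbation: since $X$ is open and $\xi\in X$ a.s., there is a strictly positive random radius $\rho$ such that the open ball $B(\xi,\rho)$ is contained in $X$ almost surely, and any $\zeta'\in\Xi$ sufficiently close to $\zeta$ will then give $\xi'=\xi+\zeta-\zeta'\in X$.

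The first step is to construct $\rho$ measurably. Since $X$ is open and graph measurable, its complement $X^c$ has values that are closed sets, and the Effros measurability of $X^c$ follows from the earlier part of the paper (graph measurability for closed-valued multifunctions on a complete probability space implies Effros measurability via the projection theorem). Consequently, $\rho(\omega)\eqdef d(\xi(\omega),X^c(\omega))$ is an $\cF$-measurable random variable, strictly positive almost surely, and $B(\xi,\rho)\subseteq X$ holds a.s.

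The second step is to patch together approximating selections from $\Xi$ using infinite decomposability. Choose $\zeta_n\in\Xi$ with $\zeta_n\to\zeta$ in probability, and pass to a subsequence so that the convergence is almost sure. For a.e. $\omega$, set
\begin{displaymath}
  n(\omega)\eqdef\min\{n\geq 1:\|\zeta_n(\omega)-\zeta(\omega)\|<\rho(\omega)\},
\end{displaymath}
which is a well-defined $\cF$-measurable integer-valued random variable because $\rho>0$ and $\zeta_n\to\zeta$ a.s. Setting $A_n\eqdef\{n(\omega)=n\}$ produces a measurable partition of $\Omega$, and
\begin{displaymath}
  \zeta'\eqdef\sum_{n\geq 1}\one_{A_n}\zeta_n\in\Xi
\end{displaymath}
by the infinite $\cF$-decomposability of $\Xi$. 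By construction $\|\zeta-\zeta'\|<\rho$ a.s.

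Finally, define $\xi'\eqdef\xi+\zeta-\zeta'$. Then $\|\xi'-\xi\|=\|\zeta-\zeta'\|<\rho$ almost surely, so $\xi'\in B(\xi,\rho)\subseteq X$ a.s.\ and hence $\xi'\in\Lnot(X,\cF)$. Since $\xi+\zeta=\xi'+\zeta'$ with $\xi'\in\Lnot(X,\cF)$ and $\zeta'\in\Xi$, the reverse inclusion follows. The main subtlety is the measurable construction of $\rho$ and $n(\omega)$; everything else is bookkeeping that exactly matches the hypothesis of infinite decomposability (rather than mere decomposability, which would give only a finite patchwork and a deterministic bound on $\|\zeta-\zeta'\|$, insufficient since $\rho$ is random and can be arbitrarily small).
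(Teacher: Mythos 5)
Your proof is correct and follows essentially the same route as the paper's: find a measurable positive radius $\rho$ with $B(\xi,\rho)\subseteq X$ a.s., use the first index $n(\omega)$ at which an approximating sequence from $\Xi$ enters that radius to build $\zeta'\in\Xi$ via infinite decomposability, and absorb the difference $\zeta-\zeta'$ into the open-set summand. The only cosmetic difference is that you make the paper's ``measurable selection argument'' for the radius explicit by taking the distance to the (Effros measurable) closed complement, which is a fine way to do it.
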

\begin{proof}  
  Consider $\gamma\in \Lnot(X,\cF)$ and $\xi\in \cl_0\Xi$, so
  that $\xi_n\to\xi$ a.s. for $\xi_n\in\Xi$, $n\geq1$. By a measurable
  selection argument, there exists an $\alpha\in \Lnot((0,\infty),\cF)$
  such that the ball of radius $\alpha$ centred at $\gamma$ is a
  subset of $X$ a.s. Let us define, up to a null set,
  \begin{displaymath}
    k(\omega)\eqdef\inf\{n:~\|\xi(\omega)-\xi_n(\omega)\|\le
    \alpha(\omega)\}, \quad \omega\in \Omega.
  \end{displaymath}
  Since the mapping $\omega\mapsto k(\omega)$ is $\cF$-measurable,
  \begin{displaymath}
    \hat \xi(\omega)\eqdef\xi_{k(\omega)}(\omega)
    =\sum_{j=1}^\infty \xi_{j}1_{k(\omega)=j},
  \end{displaymath}
  is also $\cF$-measurable and belongs to $\Xi$ by the infinite
  decomposability assumption. Since $\|\hat\xi-\xi\|\leq\alpha$ a.s.,
  \begin{displaymath}
    \xi+\gamma=(\xi+\gamma-\hat \xi)+\hat \xi\in
    \Lnot(X,\cF)+\Xi. \qedhere
  \end{displaymath}
\end{proof}

\begin{corollary} 
  \label{limit-inf-decomposable} 
  Let $\Xi$ be an infinitely $\cF$-decomposable subset of
  $\Lnot(\XX,\cF)$. For every $\gamma\in \cl_0\Xi$ and
  $\alpha\in \Lnot((0,\infty),\cF)$, there exists a $\xi\in \Xi$ such
  that $\|\gamma-\xi\|\le \alpha$ a.s.
\end{corollary}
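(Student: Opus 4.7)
The plan is to deduce this as an immediate consequence of Proposition~\ref{prop-open+decomposable} applied to the random open ball centred at the origin with radius $\alpha$.

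More precisely, I would define the set-valued mapping
\begin{displaymath}
  X(\omega)\eqdef\{x\in\XX:\; \|x\|<\alpha(\omega)\},\qquad\omega\in\Omega.
\end{displaymath}
Its graph equals $\{(\omega,x):\;\|x\|-\alpha(\omega)<0\}$, which belongs to $\cF\otimes\cB(\XX)$ since $\alpha$ is $\cF$-measurable and the norm is continuous. Hence $X$ is an $\cF$-measurable random open set, and since $\alpha>0$ a.s., the zero random element is a selection of $X$, so that $X$ is a.s. non-empty.

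Given $\gamma\in\cl_0\Xi$, write $\gamma=0+\gamma\in\Lnot(X,\cF)+\cl_0\Xi$. By Proposition~\ref{prop-open+decomposable},
\begin{displaymath}
  \Lnot(X,\cF)+\cl_0\Xi=\Lnot(X,\cF)+\Xi,
\end{displaymath}
so there exist $\eta\in\Lnot(X,\cF)$ and $\xi\in\Xi$ with $\gamma=\eta+\xi$. Since $\eta$ is a selection of $X$, we have $\|\eta\|<\alpha$ a.s., and therefore $\|\gamma-\xi\|=\|\eta\|\le\alpha$ a.s., which is the desired conclusion.

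There is no real obstacle here: once one notices that the statement is exactly the "free" content of Proposition~\ref{prop-open+decomposable} obtained by choosing $X$ to be an $\alpha$-ball and the auxiliary summand in $\Lnot(X,\cF)$ to be zero, the proof becomes two lines. The only point worth a brief verification is the graph measurability of the ball $X$, which follows from joint measurability of $(\omega,x)\mapsto\|x\|-\alpha(\omega)$.
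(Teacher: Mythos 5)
Your proof is correct and is essentially the paper's own argument: the paper likewise obtains the corollary by applying Proposition~\ref{prop-open+decomposable} with $X$ an open ball centred at zero (the paper uses radius $\alpha/2$, you use radius $\alpha$; both work, and yours even yields the strict inequality $\|\gamma-\xi\|<\alpha$). The added verification of graph measurability of the random ball is a sensible, if routine, touch.
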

\begin{proof}
  Apply Proposition~\ref{prop-open+decomposable} with $X$ being the
  open ball of radius $\alpha/2$ centred at zero. 
\end{proof}

\section{Random convex sets}
\label{sec:random-convex-sets}

\subsection{Support function}
\label{sec:supp-funct-cond}

Let $\XX^*$ be the dual space to $\XX$ with the 
pairing $\langle u,x\rangle$ for $x\in\XX$ and $u\in\XX^*$.
The space $\XX^*$ is equipped with the $\sigma(\XX^*,\XX)$-topology
(see \cite[Sec.~5.14]{alip:bor06}) and the corresponding Borel
$\sigma$-algebra, so that $\zeta$ is a random element in $\XX^*$ if
$\langle\zeta,x\rangle$ is a random variable for all $x\in\XX$.  Let
$\XX^*_0$ be a countable total subset of $\XX^*$ (which always
exists). The separability of $\XX$ ensures that the
$\sigma(\XX^*,\XX)$-topology is metrisable, and the corresponding
metric space is complete separable, see \cite{dan:sim79} and 
\cite[Th.~7.8.3]{ma95}. 

The \emph{support function} of a random set $X$ in $\XX$ is defined by
\begin{displaymath}
  h_X(u)=\sup\{\langle u,x\rangle:\; x\in X\}\,,
  \quad u\in\XX^*\,.
\end{displaymath}
The support function of the empty set is set to be $-\infty$. It is
easy to see that the support function does not discern between $X$ and
its closed convex hull. The support function is a lower semicontinuous
sublinear function of $u$; if $X$ is weakly compact, then the support
function is $\sigma(\XX^*,\XX)$-continuous, see
\cite[Th.~7.52]{alip:bor06}. Recall that all topologies consistent
with the pairing have the same lower semicontinuous sublinear
functions.

If $X$ is $p$-integrably bounded, that is $\|X\|\in\Lnot[p](\R,\cF)$
for $p\in[1,\infty]$, then $h_X(\zeta)\in \Lnot[1](\R,\cF)$ for
$\zeta\in\Lnot[q](\XX^*,\cF)$ with $p^{-1}+q^{-1}=1$.  If $X$ is not
bounded, then the support function may take infinite values, even with
probability one for each given $u\in\XX^*$, e.g., if $X$ is a line in
$\XX=\R^2$ with a uniformly distributed direction. This fact calls for
letting the argument of $h_X$ be random. The following result is well
known for deterministic arguments of the support function; it refers
to the definition of the essential supremum from \ref{sec:appendix}.

\begin{lemma}
  \label{lem:h-esssup}
  For every $\zeta\in \Lnot(\XX^*,\cF)$ and a random
  closed convex set $X$, $h_X(\zeta)$ is a random
  variable in $[-\infty,\infty]$, and 
   \begin{displaymath}
     h_X(\zeta)=\esssup[\cF] \{\langle \zeta,\xi\rangle:\; \xi\in
    \Lnot(X,\cF)\}
    \quad \text{a.s.}
  \end{displaymath}
  if $X$ is a.s. non-empty.
\end{lemma}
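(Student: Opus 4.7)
My plan is to reduce the support function to a countable supremum by means of a Castaing representation, after which measurability and the essential supremum identity fall out easily. Applying Proposition~\ref{CountRepresentation} to $X$ produces a sequence $\{\xi_i,i\geq1\}\subset\Lnot(X,\cF)$ such that $X=\cl\{\xi_i,i\geq1\}$ a.s. Since $\zeta(\omega)\in\XX^*$ a.s., the functional $\langle\zeta(\omega),\cdot\rangle$ is norm-continuous on $\XX$, and so the supremum over $X(\omega)$ coincides with the supremum over $\{\xi_i(\omega),i\geq1\}$. Consequently,
\begin{equation*}
  h_X(\zeta)=\sup_{i\geq1}\langle\zeta,\xi_i\rangle\quad\text{a.s.}
\end{equation*}

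To establish that each $\langle\zeta,\xi_i\rangle$ is an extended random variable, I would exploit the separability of $\XX$ to approximate $\xi_i$ by countably-valued simple functions $\xi_i^{(n)}=\sum_k x_k^{(n)}\one_{A_k^{(n)}}$ with $\|\xi_i-\xi_i^{(n)}\|\to0$ a.s. By the very definition of the weak*-measurability of $\zeta$, each
\begin{equation*}
  \langle\zeta,\xi_i^{(n)}\rangle=\sum_k\langle\zeta,x_k^{(n)}\rangle\one_{A_k^{(n)}}
\end{equation*}
is measurable. Since $\zeta(\omega)\in\XX^*$ a.s., the bound $|\langle\zeta,\xi_i-\xi_i^{(n)}\rangle|\leq\|\zeta\|_{\XX^*}\|\xi_i-\xi_i^{(n)}\|$ forces a.s. convergence, so $\langle\zeta,\xi_i\rangle$ is measurable as the pointwise a.s. limit, and $h_X(\zeta)$ is then measurable as a countable supremum. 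This is the step I expect to be the main obstacle, since $\zeta$ carries only the weak* Borel structure while $\xi_i$ lives with the strong one; the approximation by simple functions is precisely what bridges the two topologies.

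Finally, for the essential supremum identity, the inequality
\begin{equation*}
  h_X(\zeta)\geq\esssup[\cF]\{\langle\zeta,\xi\rangle:\xi\in\Lnot(X,\cF)\}
\end{equation*}
is immediate, since $\langle\zeta,\xi\rangle\leq h_X(\zeta)$ a.s. for every selection $\xi$. For the reverse inequality, each $\xi_i$ from the Castaing representation is itself a member of $\Lnot(X,\cF)$, so $\langle\zeta,\xi_i\rangle$ is a.s. bounded above by the essential supremum on the right; because this majorisation involves only a countable family, the bound is preserved under the countable supremum, yielding $h_X(\zeta)=\sup_i\langle\zeta,\xi_i\rangle\leq\esssup[\cF]\{\langle\zeta,\xi\rangle:\xi\in\Lnot(X,\cF)\}$ a.s., which closes the argument.
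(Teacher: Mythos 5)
Your proof is correct, and for the key inequality it takes a genuinely different --- and shorter --- route than the paper. Both arguments begin identically: a Castaing representation $\{\xi_i,i\geq1\}$ gives $h_X(\zeta)=\sup_i\langle\zeta,\xi_i\rangle$ a.s., which settles measurability (your explicit bridge between the weak* measurability of $\zeta$ and the strong measurability of $\xi_i$ via simple-function approximation is exactly the standard justification the paper leaves implicit), and the inequality $h_X(\zeta)\geq\esssup[\cF]\{\langle\zeta,\xi\rangle:\xi\in\Lnot(X,\cF)\}$ is immediate in both. For the reverse inequality the paper argues less directly: it first assumes $X$ bounded, picks a measurable selection $\eta$ of the a.s. non-empty random closed set $X\cap\{x:\langle\zeta,x\rangle\geq h_X(\zeta)-\eps\}$ to get $\esssup[\cF]\{\cdots\}\geq h_X(\zeta)-\eps$, and then removes the boundedness assumption by truncating $X$ with centred balls of radius $n$ and passing to the limit. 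You instead observe that the Castaing selections $\xi_i$ are themselves members of $\Lnot(X,\cF)$, so each $\langle\zeta,\xi_i\rangle$ is a.s. dominated by the essential supremum, and a countable supremum of a.s. dominated random variables is still a.s. dominated; this yields $h_X(\zeta)=\sup_i\langle\zeta,\xi_i\rangle\leq\esssup[\cF]\{\cdots\}$ in one step, with no $\eps$-argmax selection and no truncation, and it handles the case $h_X(\zeta)=+\infty$ automatically. The only cosmetic omission is the reduction to the a.s. non-empty case for the measurability claim (restrict to the event $\{X\neq\emptyset\}\in\cF$, where $h_X(\zeta)=-\infty$ on the complement), which the paper also dispatches in a single sentence.
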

\begin{proof}
  Since $\{X=\emptyset\}\in\cF$, it is possible to assume that $X$
  is a.s. non-empty.  Taking a Castaing representation
  $X=\cl\{\xi_i, i\geq1\}$, we confirm that $h_X(\zeta)=\sup_i
  \langle\zeta,\xi_i\rangle$ is $\cF$-measurable.
  It is immediate that $h_X(\zeta)\geq \langle \zeta,\xi\rangle$ for
  all $\xi\in\Lnot(X,\cF)$, so that
  \begin{displaymath}
    h_X(\zeta)\geq \esssup[\cF]
    \{\langle \zeta,\xi\rangle:\; \xi\in \Lnot(X,\cF)\}.
  \end{displaymath}
  Assume that $X$ is a.s.\ bounded, so that $|h_X(\zeta)|<\infty$
  a.s.  For any $\eps>0$, the random closed set
  \begin{math}
    X\cap \{x:\; \langle \zeta,x\rangle\geq h_X(\zeta)-\eps\}
  \end{math}
  is a.s. non-empty and so possesses a selection $\eta$. Then 
  \begin{displaymath}
    \esssup[\cF]\{\langle \zeta,\xi\rangle:\; \xi\in
    \Lnot(X,\cF)\}
    \geq \langle \zeta,\eta\rangle\geq h_X(\zeta)-\eps\,.
  \end{displaymath}
  Letting $\eps\downarrow 0$ yields that
  \begin{equation}
    \label{eq:1}
    h_X(\zeta)=\esssup[\cF] \{\langle \zeta,\xi\rangle:\; \xi\in
    \Lnot(X,\cF)\}\qquad \text{a.s.}
  \end{equation}
  For a general closed set $X$, $h_X(\zeta)$ is the limit of
  $h_{X^n}(\zeta)$ as $n\to \infty$, where $X^n$ is the
  intersection of $X$ with the centred ball of radius $n$. Since
  \eqref{eq:1} holds for $X=X^n$ and $X^n\subseteq X$ a.s.,
  \begin{displaymath}
    \langle \zeta,\xi_n\rangle\leq \esssup[\cF] \{\langle
    \zeta,\xi\rangle:\; \xi\in \Lnot(X,\cF)\}\qquad \text{a.s.}
  \end{displaymath}
  for all $\xi_n\in \Lnot(X^n,\cF)$. Hence, 
  \begin{multline*}
    \esssup[\cF] \{\langle
    \zeta,\xi_n\rangle:\; \xi_n\in \Lnot(X^n,\cF)\}\\
    \leq \esssup[\cF]
    \{\langle \zeta,\xi\rangle:\; \xi\in \Lnot(X,\cF)\}\qquad  \text{a.s.}
  \end{multline*}
  Therefore, 
  \begin{displaymath}
    h_X(\zeta)=\lim_{n\to\infty} h_{X^n}(\zeta)\le
    \esssup[\cF] \{\langle \zeta,\xi\rangle:\; \xi\in \Lnot(X,\cF)\},
  \end{displaymath}
  and the conclusion follows.
\end{proof}

\begin{remark}
  \label{SuppFunctionDirected}
  For $\xi_1,\xi_2\in\Lnot(X,\cF)$ and $\zeta\in \Lnot(\XX^*,\cF)$,
  define
  \begin{displaymath}
    \xi\eqdef\xi_11_{\langle \zeta,\xi_1\rangle>\langle \zeta,\xi_2\rangle}
    +\xi_21_{\langle \zeta,\xi_1\rangle\le\langle \zeta,\xi_2\rangle}.
  \end{displaymath}
  Then $\xi\in \Lnot(X,\cF)$ and $\langle \zeta,\xi\rangle=\langle
  \zeta,\xi_1\rangle\vee \langle \zeta,\xi_2\rangle$. Therefore, the
  family $\{\langle \zeta,\xi\rangle:\; \xi\in \Lnot(X,\cF)\}$ is
  directed upward, so that $\langle \zeta,\xi_n\rangle \uparrow
  h_X(\zeta)$, where $\{\xi_n, n\geq1\}\subset\Lnot(X,\cF)$.
\end{remark}

\subsection{Polar sets}
\label{sec:polar-sets}

The \emph{polar} set to a random set $X$ is defined by
\begin{displaymath}
  X^o=\{u\in\XX^*:\; h_X(u)\leq 1\}.
\end{displaymath}
The polar to $X$ is a convex $\sigma(\XX^*,\XX)$-closed set, which
coincides with the polar to the closed convex hull of $X$.

\begin{lemma} 
  \label{lemma:polar}
  If $X$ is a random set in $\XX$, then its polar $X^o$ is a random
  $\sigma(\XX^*,\XX)$-closed convex set.
\end{lemma}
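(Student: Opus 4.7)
The strategy is to represent $X^o$ as a countable intersection of random weak-$*$ closed half-spaces and then check joint measurability of the graph.

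First I would invoke Proposition~\ref{CountRepresentation} to obtain a Castaing representation $\cl X=\cl\{\xi_i,i\geq1\}$ with $\xi_i\in\Lnot(\XX,\cF)$. Since $h_X$ coincides with the support function of the closed convex hull of $X$, and by continuity of $u\mapsto\langle u,x\rangle$ on $(\XX^*,\sigma(\XX^*,\XX))$, one has for every $u\in\XX^*$
\begin{displaymath}
  h_X(u)=\sup_{i\geq1}\langle u,\xi_i\rangle\quad\text{a.s.},
\end{displaymath}
so, working $\omega$-wise,
\begin{displaymath}
  X^o(\omega)=\bigcap_{i\geq1}\{u\in\XX^*:\;\langle u,\xi_i(\omega)\rangle\leq1\}.
\end{displaymath}
Each of these sets is a weak-$*$ closed (because $u\mapsto\langle u,\xi_i(\omega)\rangle$ is $\sigma(\XX^*,\XX)$-continuous) and convex half-space, and a countable intersection of weak-$*$ closed convex sets is weak-$*$ closed and convex, which settles the geometry of $X^o(\omega)$.

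For the measurability, I would verify that $\Gr X^o\in\cF\otimes\cB(\XX^*)$ directly. For each fixed $i$, the map
\begin{displaymath}
  (\omega,u)\longmapsto\langle u,\xi_i(\omega)\rangle
\end{displaymath}
is $\cF$-measurable in $\omega$ (for every fixed $u$, by definition of a random element of $\XX^*$, or rather its dual, since here $u$ is deterministic and $\xi_i$ is an $\XX$-valued random element) and $\sigma(\XX^*,\XX)$-continuous in $u$ for fixed $\omega$. Recalling that $\XX^*$ with $\sigma(\XX^*,\XX)$ is a Polish space, a standard Carath\'eodory-function argument yields joint measurability with respect to $\cF\otimes\cB(\XX^*)$. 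Hence
\begin{displaymath}
  \Gr X^o=\bigcap_{i\geq1}\{(\omega,u):\;\langle u,\xi_i(\omega)\rangle\leq1\}\in\cF\otimes\cB(\XX^*),
\end{displaymath}
so $X^o$ is graph measurable, and therefore a random $\sigma(\XX^*,\XX)$-closed convex set as required.

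The only mildly delicate step is the passage from the pointwise supremum formula $h_X(u)=\sup_i\langle u,\xi_i\rangle$ (which holds for each deterministic $u$ almost surely) to the pointwise identity for $X^o$ valid simultaneously for all $u$; this is handled by the countability of the indexing family together with the fact that $X^o$ depends only on the closed convex hull of $X$, so we may assume without loss that $X=\cl\{\xi_i,i\geq1\}$ pointwise and read the formula off directly.
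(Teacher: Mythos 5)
Your proposal is correct and follows essentially the same route as the paper: a Castaing representation of $\cl X$, the identity $\Gr X^o=\bigcap_{i\geq1}\{(\omega,u):\langle u,\xi_i(\omega)\rangle\leq 1\}$, and the Polishness of $\XX^*$ in the $\sigma(\XX^*,\XX)$-topology. You merely spell out the Carath\'eodory-function argument for joint measurability that the paper leaves implicit.
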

\begin{proof}
  By Proposition~\ref{CountRepresentation}, $\cl X$ admits a
  Castaing representation $\{\xi_i,i\geq1\}$. Then
  \begin{displaymath}
    \Gr X^o=\bigcap_{i\geq1}\{(\omega,u)\in\Omega\times\XX^*:\;
    \langle u,\xi_i(\omega)\rangle\leq 1\}\in \cF\otimes \cB(\XX^*),
  \end{displaymath}
  and it remains to note that $X^o$ is closed for all $\omega$ and to
  note that $\XX^*$ is Polish in the $\sigma(\XX^*,\XX)$-topology. 
\end{proof}

The following result is a variant of the well-known fact saying
that each convex closed set equals the intersection of at most a
countable number of half-spaces. In the setting of random convex sets,
these half-spaces become random and are determined by selections of
$X^o$. 

\begin{theorem}
  \label{thr:countable}
  Each almost surely non-empty random closed convex set $X$ in a
  separable Banach space is obtained as the intersection of an at most 
  countable number of random half-spaces, that is, there exists a
  countable set $\{\zeta_n,n\geq1\}\subset\Lnot(\XX^*,\cF)$ such that
  \begin{equation}
    \label{eq:cr}
    X=\bigcap_{n\geq1} \{x\in \XX:\;
    \langle \zeta_n,x\rangle\leq h_X(\zeta_n)\}.
  \end{equation}
  If $X$ is weakly compact, then it is possible to let $\zeta_n$ be
  deterministic from a countable total set. 
\end{theorem}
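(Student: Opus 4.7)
The plan is to combine Hahn--Banach separation with Aumann's measurable selection theorem, localized at a countable dense set $\{x_i,i\geq1\}\subset \XX$. I would let $B^*$ denote the closed unit ball of $\XX^*$, which by separability of $\XX$ is compact and metrizable, hence Polish, in the $\sigma(\XX^*,\XX)$-topology. For each $i$, the distance $\rho_i(\omega)\eqdef d(x_i,X(\omega))$ is $\cF$-measurable, and when $\rho_i(\omega)>0$ the Hahn--Banach theorem separates $X(\omega)$ from the open ball of radius $\rho_i(\omega)$ centred at $x_i$ by some $u\in B^*$ with $\langle u,x_i\rangle - h_X(u,\omega)\geq \rho_i(\omega)$; the functional $u=0$ works when $\rho_i(\omega)=0$.

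The key step is to promote this pointwise construction to a measurable one. Using a Castaing representation $\cl X=\cl\{\xi_j,j\geq1\}$ from Proposition~\ref{CountRepresentation}, $h_X(u,\omega)=\sup_j\langle u,\xi_j(\omega)\rangle$ is jointly $\cB(\XX^*)\otimes \cF$-measurable, so the set-valued map
\[
V_i(\omega)\eqdef\bigl\{u\in B^*:\ \langle u,x_i\rangle - h_X(u,\omega)\geq \rho_i(\omega)/2\bigr\}
\]
has $\cF\otimes \cB(\XX^*)$-measurable graph and non-empty sections. Applying Aumann's selection theorem in the Polish space $(B^*,\sigma(\XX^*,\XX))$ over the complete probability space $(\Omega,\cF,\P)$ yields $\zeta_i\in \Lnot(\XX^*,\cF)$ with $\zeta_i(\omega)\in V_i(\omega)$ a.s. The inclusion ``$\subset$'' in \eqref{eq:cr} is immediate from the definition of $h_X$. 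For the reverse inclusion, fix $\omega$ outside a null set and $x\notin X(\omega)$; closedness gives $\rho(\omega)\eqdef d(x,X(\omega))>0$, and choosing $i$ with $\|x-x_i\|<\rho(\omega)/4$ forces $\rho_i(\omega)\geq 3\rho(\omega)/4$ by the triangle inequality, whence, since $\zeta_i(\omega)\in B^*$,
\[
\langle\zeta_i,x\rangle - h_X(\zeta_i)\geq \rho_i(\omega)/2 - \|x-x_i\|\geq 3\rho(\omega)/8 - \rho(\omega)/4 = \rho(\omega)/8 > 0,
\]
so $x$ fails the $i$-th constraint.

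For the weakly compact case, $h_X$ is $\sigma(\XX^*,\XX)$-continuous on $\XX^*$ by \cite[Th.~7.52]{alip:bor06}. Since $B^*$ is compact and metrizable it is separable in the $\sigma(\XX^*,\XX)$-topology, so I would pick a countable weak-$*$ dense subset $\{u_n,n\geq1\}\subset B^*$; this family is automatically total, because $\langle u_n,x\rangle=0$ for all $n$ forces $\langle u,x\rangle=0$ for every $u\in B^*$ by weak-$*$ approximation, hence $x=0$. For $x\notin X$, Hahn--Banach supplies $u^*\in B^*$ with $\langle u^*,x\rangle > h_X(u^*)$, and continuity of $u\mapsto \langle u,x\rangle - h_X(u)$ combined with density yields some $u_n$ with $\langle u_n,x\rangle > h_X(u_n)$. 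The main obstacle will be that pointwise Hahn--Banach separation at the countable set $\{x_i\}$ does not directly separate arbitrary exterior points; the margin $\rho_i/2$ built into $V_i$ is what resolves this, while joint measurability of $h_X(u,\omega)$ through the Castaing representation is the ingredient that makes Aumann's theorem applicable.
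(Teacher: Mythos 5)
Your proof is correct, but it takes a genuinely different route from the paper's. The paper reduces to the case $0\in X$ a.s.\ by translating along a selection, takes a Castaing representation $\{\zeta_n,n\geq1\}$ of the polar set $X^o$ (measurable by Lemma~\ref{lemma:polar}), and invokes the bipolar theorem: $X=(X^o)^o$ is exactly the polar of the countable family $\{\zeta_n\}$, i.e.\ the intersection $\bigcap_n\{\langle\zeta_n,x\rangle\leq 1\}$, which is then tightened to \eqref{eq:cr} using $h_X(\zeta_n)\leq1$. You instead run a quantitative Hahn--Banach separation at a countable dense set $\{x_i\}\subset\XX$, encode the separating functionals with margin $\rho_i/2$ into the measurable multifunction $V_i$ inside the dual ball $B^*$, and extract $\zeta_i$ by Aumann's selection theorem; the reverse inclusion then follows from the density of $\{x_i\}$ together with the uniform bound $\|\zeta_i\|\leq1$. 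What the paper's route buys is brevity and reuse of already-established machinery (Lemma~\ref{lemma:polar}, bipolar theorem), at the price of the translation reduction; what your route buys is independence from the polar-set apparatus, no need to assume $0\in X$, and functionals confined to $B^*$ with an explicit separation margin --- at the price of having to verify joint $\cF\otimes\cB(\XX^*)$-measurability of $(\omega,u)\mapsto h_X(u,\omega)$ (which your Carath\'eodory argument via the Castaing representation does handle, since $B^*$ is weak-$*$ compact metrizable) and of invoking a measurable selection theorem. Your treatment of the weakly compact case is essentially the paper's, and is in fact slightly more careful in specifying that the countable total set should be weak-$*$ dense in $B^*$.
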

\begin{proof}
  Assume first that $X$ almost surely contains the origin, and let
  \begin{displaymath}
    X^o=\cl \{\zeta_n,n\geq1\}
  \end{displaymath}
  be a Castaing representation of $X^o$, which is graph
  measurable by Lemma~\ref{lemma:polar}. Since $X$ is convex, it is
  also weakly closed, and the bipolar theorem yields that
  $X=(X^o)^o$. The second polar does not make a difference between the
  set $\{\zeta_n,n\geq1\}$ and its closure, whence $X$ is the polar set to
  $\{\zeta_n,n\geq1\}$, i.e.
  \begin{equation}
    \label{eq:9}
    X=\bigcap_{n\geq1} \{x\in\XX:\; \langle \zeta_n,x\rangle \leq
    1\}. 
  \end{equation}
  Denote by $\tilde{X}$ the right-hand side of \eqref{eq:cr}.  Since
  $h_X(\zeta_n)\leq1$, the right-hand side of \eqref{eq:9} is a
  superset of $\tilde{X}$. It remains to note that $X$ is a subset of
  $\{x\in \XX:\; \langle \zeta_n,x\rangle\leq h_X(\zeta_n)\}$ for all
  $n$ and so is a subset of $\tilde{X}$. 

  If $X$ does not necessarily contain the origin, consider $Y=X-\xi$
  for any $\xi\in\Lnot(X,\cF)$, so that 
  \begin{align*}
    Y&=\bigcap_{n\geq1} \{y\in \XX:\;
    \langle \zeta_n,y\rangle\leq h_Y(\zeta_n)\}\\
    &=\bigcap_{n\geq1} \{y\in \XX:\;
    \langle \zeta_n,y+\xi\rangle\leq h_X(\zeta_n)\}.
  \end{align*}
  It remains to note that $x\in X$ if and only if $x-\xi\in Y$.
  
  If $X$ is weakly compact, then the support function is
  $\sigma(\XX^*,\XX)$-continuous on $\XX^*$, and $h_X(u)$ is a
  finite random variable for each $u\in\XX^*$. Consequently, $X$ equals
  the intersection of half-spaces $\{x:\; \langle u,x\rangle\leq
  h_X(u)\}$ for all $u\in\XX^*_0$.
\end{proof}

\begin{corollary}
  \label{cor:all-zeta}
  Each almost surely non-empty random closed convex set $X$ in a
  separable Banach space satisfies
  \begin{equation}
    \label{eq:12}
    X=\bigcap_{\zeta\in\Lnot(\XX^*,\cF)} \{x\in \XX:\;
    \langle \zeta,x\rangle\leq h_X(\zeta)\}.
  \end{equation}
\end{corollary}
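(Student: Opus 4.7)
The plan is to reduce the corollary to Theorem~\ref{thr:countable} by observing that the intersection in \eqref{eq:12} is taken over a family that contains the countable collection supplied by that theorem, while also containing all other $\zeta$ for which the corresponding half-space automatically contains $X$. Denote the right-hand side of \eqref{eq:12} by $\tilde X$; the equality will be established by showing the two inclusions separately.

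For the inclusion $X \subseteq \tilde X$, I would argue directly from the definition of the support function. Fix $\zeta \in \Lnot(\XX^*,\cF)$. For almost every $\omega$ and every $x \in X(\omega)$, one has $\langle \zeta(\omega), x \rangle \le \sup\{\langle \zeta(\omega), y\rangle : y \in X(\omega)\} = h_X(\zeta)(\omega)$, so $x$ lies in the random half-space $\{x \in \XX : \langle \zeta, x \rangle \le h_X(\zeta)\}$. Taking the intersection over all $\zeta \in \Lnot(\XX^*,\cF)$ yields $X \subseteq \tilde X$ almost surely.

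For the reverse inclusion $\tilde X \subseteq X$, I would invoke Theorem~\ref{thr:countable}, which provides a countable family $\{\zeta_n, n\ge 1\} \subset \Lnot(\XX^*,\cF)$ such that
\begin{displaymath}
  X = \bigcap_{n \ge 1} \{x \in \XX : \langle \zeta_n, x \rangle \le h_X(\zeta_n)\}\qquad\text{a.s.}
\end{displaymath}
Since each $\zeta_n$ belongs to $\Lnot(\XX^*,\cF)$, the intersection defining $\tilde X$ is over a larger index set than the countable one above, so $\tilde X$ is a subset of the right-hand side, i.e.\ of $X$, almost surely.

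There is no substantive obstacle here: Theorem~\ref{thr:countable} has already handled the difficult part of representing a random closed convex set as a countable intersection of random half-spaces generated by measurable selections of the polar. The only point worth noting is that \eqref{eq:12} should be read pointwise almost surely, and that the countable Castaing-type family from Theorem~\ref{thr:countable} is a subfamily of $\Lnot(\XX^*,\cF)$, so enlarging the index set to all of $\Lnot(\XX^*,\cF)$ only refines the intersection without shrinking it below $X$.
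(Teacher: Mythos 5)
Your proposal is correct and follows essentially the same route as the paper: the inclusion $X\subseteq\tilde X$ is immediate from the definition of the support function, and the reverse inclusion follows because the uncountable intersection is contained in the countable intersection over the family $\{\zeta_n\}$ supplied by Theorem~\ref{thr:countable}, which equals $X$. No gaps.
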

\begin{proof}
  It suffices to note that $X$ is a subset of the right-hand side of
  \eqref{eq:12}, and the uncountable intersection is a subset of the
  right-hand side of \eqref{eq:9}. 
\end{proof}

\begin{corollary}
  \label{cor:domination}
  If $X$ and $Y$ are two random convex closed sets and
  $h_Y(\zeta)\leq h_X(\zeta)$ a.s. for each
  $\zeta\in\Lnot(\XX^*,\cF)$, then  $Y\subset X$ a.s.
\end{corollary}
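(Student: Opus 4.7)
The plan is to reduce the uncountable intersection in Corollary~\ref{cor:all-zeta} to a countable one by invoking Theorem~\ref{thr:countable}, and then to verify the inclusion on a Castaing representation of $Y$.

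First, I would apply Theorem~\ref{thr:countable} to obtain a countable family $\{\zeta_n, n\geq1\}\subset\Lnot(\XX^*,\cF)$ such that
\begin{displaymath}
  X=\bigcap_{n\geq1} \{x\in\XX:\; \langle \zeta_n,x\rangle\leq h_X(\zeta_n)\}\quad\text{a.s.}
\end{displaymath}
On the event $\{X=\emptyset\}\cup\{Y=\emptyset\}$ there is nothing to prove (this event is $\cF$-measurable), so I may restrict attention to the complementary event on which both sets are non-empty. I then fix a Castaing representation $\{\eta_i, i\geq1\}\subset\Lnot(Y,\cF)$ of $Y$, which exists by Proposition~\ref{CountRepresentation}.

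Next, for every pair $(i,n)$, since $\eta_i(\omega)\in Y(\omega)$ a.s., the definition of the support function gives $\langle \zeta_n,\eta_i\rangle\leq h_Y(\zeta_n)$ a.s.; combined with the hypothesis $h_Y(\zeta_n)\leq h_X(\zeta_n)$ a.s., this yields
\begin{displaymath}
  \langle \zeta_n,\eta_i\rangle\leq h_X(\zeta_n)\qquad\text{a.s.}
\end{displaymath}
The exceptional null set depends on $(i,n)$, but since the pairs form a countable family, the union of these null sets is still a null set. Hence, off a single null set, $\eta_i\in\{x:\langle\zeta_n,x\rangle\leq h_X(\zeta_n)\}$ for all $n\geq1$ and all $i\geq1$ simultaneously, so that $\eta_i\in X$ a.s.\ for every $i$.

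Finally, since $X$ is a.s.\ closed, passing to the closure in $Y=\cl\{\eta_i,i\geq1\}$ preserves the inclusion, and I conclude $Y\subset X$ a.s. The potential obstacle here is exactly the measurability/uncountability issue (one cannot conclude directly from Corollary~\ref{cor:all-zeta}, because the defining intersection ranges over an uncountable family of $\zeta$'s and the exceptional null sets could accumulate); the use of Theorem~\ref{thr:countable} to restrict to a countable family of selections of $X^o$ is what circumvents this difficulty.
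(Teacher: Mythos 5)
Your proof is correct and follows essentially the same route as the paper's: both reduce to the countable family $\{\zeta_n\}$ supplied by Theorem~\ref{thr:countable} for $X$, and the only (cosmetic) difference is that you justify the inclusion $Y\subset\{x:\langle\zeta_n,x\rangle\leq h_Y(\zeta_n)\}$ via a Castaing representation of $Y$ rather than by citing Corollary~\ref{cor:all-zeta}. One small imprecision: on the event $\{X=\emptyset\}\cap\{Y\neq\emptyset\}$ there \emph{is} something to prove, but the hypothesis with $\zeta=0$ (for which $h_X(0)=-\infty$ exactly when $X=\emptyset$) shows this event is null, so the reduction to a.s.\ non-empty sets stands.
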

\begin{proof}
  Consider the representation of $X$ given by \eqref{eq:9}. Then 
  \begin{displaymath}
    X\supset 
    \bigcap_{n\geq1}\{x\in\XX:\; \langle \zeta_n,x\rangle\leq
    h_Y(\zeta_n)\}
    \supset Y,
  \end{displaymath}
  where the latter inclusion follows from \eqref{eq:12}. 
\end{proof}

Unless the random sets are weakly compact, it does not suffice to
consider deterministic $\zeta$ in Corollary~\ref{cor:domination}.

\subsection{Epigraphs}
\label{sec:epigraphs}

The \emph{epigraph} of a function $f:\XX^*\mapsto[-\infty,\infty]$ is
defined as
\begin{displaymath}
  \epi f=\{(u,t)\in\XX^*\times\R:\; f(u)\leq t\}.
\end{displaymath}
The epigraphs of support functions can be characterised as subsets of
$\XX^*\times\R$ closed in the product of the
$\sigma(\XX^*,\XX)$-topology and the Euclidean topology on $\R$, and which
are convex cones that, with each element $(u,t)$, contain $(u,s)$ for all
$s\geq t$. We denote the family of such subsets by $\cE$. The
following result characterises epigraphs of random closed sets in
$\XX$ and is interesting on its own. Its version for Euclidean spaces
is known, see \cite[Prop.~5.3.6]{mo1}.

\begin{theorem}
  \label{thr:epi}
  A closed convex set-valued mapping $X$ in $\XX$ is $\cF$-measurable if and
  only if $\epi h_X$ is an $\cF$-measurable random closed set with values in $\cE$. 
\end{theorem}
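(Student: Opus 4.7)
Both implications hinge on a Castaing-type exchange between selections of $X$ and selections of $\epi h_X$, mediated by the bilinear pairing $(u,x)\mapsto\langle u,x\rangle$ and the observation that this pairing is Carathéodory on $\XX^*\times\XX$ when $\XX^*$ is equipped with $\sigma(\XX^*,\XX)$.

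For the forward direction, suppose $X$ is a random closed convex set. Proposition~\ref{CountRepresentation} supplies a Castaing representation $X=\cl\{\xi_i,i\geq 1\}$, and norm-continuity of each $\langle u,\cdot\rangle$ gives $h_X(u)=\sup_{i\geq1}\langle u,\xi_i\rangle$ pointwise on $\XX^*$, so
\[
\epi h_X=\bigcap_{i\geq 1}\bigl\{(u,t)\in\XX^*\times\R:\;\langle u,\xi_i\rangle\leq t\bigr\}.
\]
For each $i$, the function $(\omega,u,t)\mapsto\langle u,\xi_i(\omega)\rangle-t$ is Carathéodory (measurable in $\omega$, continuous on $\XX^*\times\R$ in the $\sigma(\XX^*,\XX)\times$Euclidean topology), hence jointly measurable; the intersection of countably many such measurable sublevel sets lies in $\cF\otimes\cB(\XX^*\times\R)$, so $\Gr\epi h_X$ is measurable. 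Since $\XX^*\times\R$ is Polish in this product topology (cf.\ Section~\ref{sec:supp-funct-cond}) and each fibre $\epi h_X(\omega)$ is closed, graph measurability upgrades to Effros measurability. Membership in $\cE$ is automatic: $h_X$ is sublinear and lower semicontinuous, and the upward-ray property is trivial.

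For the backward direction, let $\Phi=\epi h_X$ be a random closed set with values in $\cE$. Applying Proposition~\ref{CountRepresentation} within the Polish space $\XX^*\times\R$, choose measurable selections $(\zeta_n,t_n)$ with $\Phi=\cl\{(\zeta_n,t_n),n\geq 1\}$ a.s. The central claim is
\[
X=\bigcap_{n\geq 1}\bigl\{x\in\XX:\;\langle\zeta_n,x\rangle\leq t_n\bigr\}\quad\text{a.s.}
\]
The inclusion $\subset$ is immediate, since $x\in X$ and $(\zeta_n,t_n)\in\epi h_X$ yield $\langle\zeta_n,x\rangle\leq h_X(\zeta_n)\leq t_n$. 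For the reverse inclusion, fix $\omega$ and $x\notin X(\omega)$. If $X(\omega)$ is non-empty, Hahn-Banach separation produces $u\in\XX^*$ and $\alpha\in\R$ with $\langle u,x\rangle>\alpha\geq h_X(u)$, so $(u,\alpha)\in\Phi(\omega)$; by density, some subsequence $(\zeta_{n_k},t_{n_k})\to(u,\alpha)$ in the product topology, which gives $\langle\zeta_{n_k},x\rangle>t_{n_k}$ for large $k$, contradicting the intersection condition. If $X(\omega)=\emptyset$, then $\Phi(\omega)=\XX^*\times\R$ and any pair $(u,t)$ with $\langle u,x\rangle>t$ can be approximated by the $(\zeta_n,t_n)$, yielding the same contradiction. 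Joint Carathéodory measurability of $(\omega,x)\mapsto\langle\zeta_n(\omega),x\rangle-t_n(\omega)$ then places $\Gr X$ into $\cF\otimes\cB(\XX)$.

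The principal obstacle sits in the backward direction: one must export the Castaing construction from $\XX$ into the dual $\XX^*$ (needing the Polish structure under the weak-$*$ topology recalled in Section~\ref{sec:supp-funct-cond}) and then combine density of $\{(\zeta_n,t_n)\}$ with Hahn-Banach separation to recover $X$ from its dual side, paying attention to the corner case $X(\omega)=\emptyset$ where the support function is identically $-\infty$ and the epigraph degenerates to the whole product space.
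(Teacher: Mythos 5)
Your proof is correct. The necessity half coincides with the paper's: a Castaing representation of $X$ turns $\epi h_X$ into a countable intersection of sets cut out by Carath\'eodory functions $(\omega,u,t)\mapsto\langle u,\xi_i(\omega)\rangle-t$. In the sufficiency half both arguments arrive at the same representation $X=\bigcap_n\{x:\langle\zeta_n,x\rangle\le t_n\}$ built from a Castaing representation $\{(\zeta_n,t_n)\}$ of the epigraph, but you verify the nontrivial inclusion by a different mechanism. The paper approximates an arbitrary selection $(\zeta,h(\zeta))$ of the epigraph by finite combinations of the $(\zeta_n,t_n)$ via Lemma~\ref{ApproxSelector}, passes to the limit using lower semicontinuity of the support function of the intersection $Z$ to get $h_Z(\zeta)\le h(\zeta)$ for every random argument, and then invokes Corollary~\ref{cor:domination} to conclude $Z\subset X$. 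You instead argue $\omega$-wise: for $x\notin X(\omega)$, Hahn--Banach separation produces a point $(u,\alpha)$ of the epigraph with $\langle u,x\rangle>\alpha$, and density of the Castaing family in the $\sigma(\XX^*,\XX)\times{}$Euclidean topology --- weak-$*$ convergence evaluated at the single vector $x$ --- transfers the strict inequality to some $(\zeta_{n_k},t_{n_k})$. Your route is more elementary in that it bypasses Lemma~\ref{ApproxSelector} and Corollary~\ref{cor:domination} (and the bipolar machinery behind the latter), at the cost of re-deriving pointwise what that corollary packages; it also treats the degenerate case $X(\omega)=\emptyset$ explicitly, which the paper leaves implicit. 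Both treatments rest on the same non-obvious ingredient, namely that the Castaing construction applies to random closed sets in the Polish space $\XX^*\times\R$ under the weak-$*$ product topology, which you correctly flag.
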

\begin{proof}
  \textsl{Necessity.}  Consider a Castaing representation
  $X=\cl\{\xi_i, i\geq1\}$ of an $\cF$-measurable random closed convex
  set $X$.  Then
  \begin{displaymath}
    h_X(u)=\sup\{\langle x,u\rangle:\; x\in X\}
    =\sup_{i\geq1} \langle \xi_i, u \rangle, \quad u\in \XX^*.
  \end{displaymath}
  Therefore, the graph of $\epi h_X$ is given by
  \begin{equation}
    \Gr  (\epi h_X) =\bigcap_{i\ge 1}\{ (\omega,u,t)\in
    \Omega\times\XX^*\times \R :~t\ge \langle \xi_i(\omega), u \rangle\}
    \label{graphEpi-hX}
  \end{equation}
  Finally, note that the mapping $u\mapsto \langle \xi_i(\omega), u
  \rangle$ is measurable with respect to the product of $\cF$ and
  $\cB(\XX^*)$.

  \textsl{Sufficiency.} Let $Y$ be a random closed set with values in
  $\cE$. Then $Y$ is the epigraph of a lower semicontinuous
  sublinear function
  \begin{displaymath}
    h(u)=\inf\{t:\; (u,t)\in Y\}. 
  \end{displaymath}
  Thus, $h$ is the support function of a set-valued map $X$ with
  closed convex values. It remains to show that $X$ is
  $\cF$-measurable. 

  Let $\{(\zeta_i,t_i),i\geq1\}$ be a Castaing representation of
  $Y$.  Define an $\cF$-measurable random closed set by letting
  \begin{displaymath}
    Z=\bigcap_{n\ge1} \{x\in\XX:\; \langle\zeta_n,x\rangle\leq t_n\}. 
  \end{displaymath}
  Let $(\zeta,t)$ be a selection of $Y$, that is $h(\zeta)\leq t$
  a.s., and assume that $t=h(\zeta)$. 
  By Lemma~\ref{ApproxSelector}, $(\zeta,t)$ is the a.s. limit of
  a sequence $(\zeta'_m,t'_m)$, where $(\zeta'_m,t'_m)$ are obtained
  as combinations of the members of a Castaing representation of
  $Y$. It is easy to see that 
  \begin{displaymath}
    Z\subset \{x\in\XX:\; \langle\zeta'_m,x\rangle\leq t'_m\},\quad m\geq1,
  \end{displaymath}
  whence $h_Z(\zeta'_m)\leq t'_m$ for all $m\geq1$. Note that
  $\zeta'_m\to\zeta$ in the norm topology on $\XX^*$, whence also in
  $\sigma(\XX^*,\XX)$. Passing to the limits and using the lower
  semicontinuity of the support function $h_Z$ yields that
  $h_Z(\zeta)\leq h(\zeta)$. By Corollary~\ref{cor:domination},
  $Z\subset X$. The other inclusion is obvious.
\end{proof}

\section{Conditional core}
\label{sec:conditional-core}

\subsection{Existence}
\label{sec:existence-results}

The following concept is related to the measurable versions of random
closed sets considered in Proposition~\ref{MeasurVersClos1}. 

\begin{definition} 
  Let $X$ be any set-valued mapping. The conditional core $\cm(X|\cH)$
  of $X$ (also called $\cH$-core) is the largest $\cH$-measurable
  random set $X'$ such that $X'\subseteq X$ a.s.
\end{definition}

The following result relates the conditional core to the family of
$\cH$-measurable selections of $X$.

\begin{lemma}
  \label{H-meas-represent-m(X|H)exists} 
  If $\cm(X|\cH)$ exists and is almost surely non-empty, then
  \begin{equation}
    \label{eq:6}
    \Lnot(X,\cH)=\Lnot(\cm(X|\cH),\cH),
  \end{equation}
  in particular $\cm(X,\cH)$ is a.s. non-empty if and only if
  $\Lnot(X,\cH)\ne \emptyset$.
\end{lemma}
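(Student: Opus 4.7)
The plan is to prove the two inclusions in \eqref{eq:6} separately, then derive the iff statement from them together with the standard existence of selections.

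\emph{First inclusion:} $\Lnot(\cm(X|\cH),\cH) \subseteq \Lnot(X,\cH)$ is immediate, because by definition $\cm(X|\cH) \subseteq X$ almost surely, so every $\cH$-measurable selection of the conditional core is also an $\cH$-measurable selection of $X$.

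\emph{Second (harder) inclusion:} $\Lnot(X,\cH) \subseteq \Lnot(\cm(X|\cH),\cH)$. This is where I would use the \emph{maximality} property built into the definition of $\cm(X|\cH)$. Given $\xi \in \Lnot(X,\cH)$, consider the set-valued map
\begin{displaymath}
  X'(\omega) \eqdef \cm(X|\cH)(\omega) \cup \{\xi(\omega)\}.
\end{displaymath}
Since $\xi$ is $\cH$-measurable and $\cm(X|\cH)$ is an $\cH$-measurable random set, $X'$ is $\cH$-measurable (its graph is the union of two sets in $\cH\otimes\cB(\XX)$). Moreover $X'\subseteq X$ a.s.\ because both $\cm(X|\cH)\subseteq X$ and $\xi\in X$ a.s. By the maximality clause in the definition of the conditional core, we must have $X'\subseteq \cm(X|\cH)$ a.s., which forces $\xi(\omega)\in\cm(X|\cH)(\omega)$ a.s. Hence $\xi\in\Lnot(\cm(X|\cH),\cH)$.

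\emph{The ``in particular'' part.} If $\cm(X|\cH)$ is a.s.\ non-empty, then by the existence of selections (stated in the paragraph after the definition of selection, attributed to \cite{hes02}) applied on the sub-$\sigma$-algebra $\cH$, the set $\Lnot(\cm(X|\cH),\cH)$ is non-empty, and by \eqref{eq:6} so is $\Lnot(X,\cH)$. Conversely, if $\Lnot(X,\cH)\ne\emptyset$, pick $\xi\in \Lnot(X,\cH)$; the deterministic-fiber map $\omega\mapsto\{\xi(\omega)\}$ is an $\cH$-measurable random set contained in $X$ a.s., so by maximality it is contained in $\cm(X|\cH)$, showing $\cm(X|\cH)$ is a.s.\ non-empty.

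The only real subtlety is the legitimacy of the union step: one must check that $\cm(X|\cH)\cup\{\xi\}$ qualifies as an $\cH$-measurable random set so the maximality can actually be invoked, but this is automatic from joint graph-measurability. Everything else is a direct unwinding of definitions.
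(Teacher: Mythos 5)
Your proof is correct and follows essentially the same route as the paper: the key step in both is to apply the maximality clause in the definition of $\cm(X|\cH)$ to an $\cH$-measurable random set containing $\xi(\omega)$ and contained in $X$ (the paper simply uses the singleton $\{\xi(\omega)\}$, so your union with $\cm(X|\cH)$ is harmless but superfluous). The handling of the ``in particular'' part via the existence of selections is also the intended argument.
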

\begin{proof}
  In order to show the non-trivial inclusion, consider $\gamma\in
  \Lnot(X,\cH)$. The random set
  $X'(\omega)\eqdef\{\gamma(\omega)\}$ is $\cH$-measurable and
  satisfies $X'\subseteq X$ a.s. It follows that
  $X'\subseteq \cm(X|\cH)$ a.s., so that
  $\gamma\in\cm(X|\cH)$ a.s.
\end{proof}

The existence of the $\cH$-core is the issue of the existence of the
\emph{largest} $\cH$-measurable set $X'\subset X$. It does not prevent
$\cm(X|\cH)$ from being empty.
If $\cH$ is the trivial $\sigma$-algebra, then $\cm(X|\cH)$ is the set
of all points $x\in\XX$ such that $x\in X(\omega)$ almost surely. Such
points are called \emph{fixed points} of a random set and it is
obvious that the set of fixed points may be empty.

\begin{lemma}
  \label{Exist-closed-case} 
  If $X$ is a random closed set, then $\cm(X|\cH)$ exists and is a
  random closed set, which is a.s. convex (respectively, is a cone) if
  $X$ is a.s. convex (respectively, is a cone).
\end{lemma}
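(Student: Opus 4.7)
The plan is to apply Corollary~\ref{cor:H-dec} with $p=0$ to the family $\Xi \eqdef \Lnot(X,\cH)$ of $\cH$-measurable selections of $X$, and then read off the conditional-core properties from the resulting random closed set. First I verify that $\Xi$ is closed in $\Lnot(\XX,\cF)$: if $\xi_n\to\xi$ in probability with $\xi_n\in\Xi$, a subsequence converges almost surely, the limit is $\cH$-measurable as a limit of $\cH$-measurable functions, and since $X$ is a.s.\ closed the limit lies in $X$ a.s. I also check that $\Xi$ is $\cH$-decomposable: for $\xi,\eta\in\Xi$ and $A\in\cH$, the combination $\one_A\xi+\one_{A^c}\eta$ is $\cH$-measurable and pointwise in $X$.

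Corollary~\ref{cor:H-dec} then produces an $\cH$-measurable random closed set $X'$ with $\Lnot(X',\cH)=\Xi\cap\Lnot(\XX,\cH)=\Lnot(X,\cH)$. To show $X'\subseteq X$ a.s., I invoke the Castaing representation of Proposition~\ref{CountRepresentation} in the $\cH$-measurable category: $X'=\cl\{\eta_i,\,i\geq1\}$ for some $\eta_i\in\Lnot(X',\cH)=\Lnot(X,\cH)$, so each $\eta_i$ lies in $X$ a.s., and closedness of $X$ transfers the inclusion to the closure. For maximality, let $X''$ be any $\cH$-measurable random set with $X''\subseteq X$ a.s.; replacing $X''$ by its pointwise closure (an $\cH$-measurable random closed set by Proposition~\ref{MeasurVersClos}, still sitting inside the closed set $X$) I may assume $X''$ is closed, and then an $\cH$-measurable Castaing representation of $X''$ exhibits it as the closure of countably many selections belonging to $\Lnot(X,\cH)=\Lnot(X',\cH)$, giving $X''\subseteq X'$ a.s. The case $\Lnot(X,\cH)=\emptyset$ is handled by taking $X'\eqdef\emptyset$ and invoking the measurable selection theorem on $(\Omega,\cH,\P|_{\cH})$ to preclude any non-empty $\cH$-measurable subset of $X$ on a positive-probability $\cH$-event.

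Preservation of convexity and of the conic structure is immediate from the corresponding clause of Theorem~\ref{ReprDecompL0}: if $X$ is a.s.\ convex, then $\Xi$ is a convex subset of $\Lnot(\XX,\cH)$ (convex combinations of $\cH$-measurable selections remain $\cH$-measurable and pointwise in the convex set $X$), so the produced $X'$ is a.s.\ convex; the same reasoning applies if $X$ is a.s.\ a cone. The step I expect to require the most care is the emptiness case: verifying that $\Lnot(X,\cH)=\emptyset$ rules out every non-empty $\cH$-measurable random subset of $X$ relies on a measurable selection argument for graph-measurable sets with respect to the (possibly incomplete) sub-$\sigma$-algebra $\cH$, which must be handled by passing to the $\P$-completion of $\cH$ within $\cF$.
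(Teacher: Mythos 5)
Your treatment of the case $\Lnot(X,\cH)\neq\emptyset$ is essentially the paper's argument (decomposability of $\Lnot(X,\cH)$, Theorem~\ref{ReprDecompL0}/Corollary~\ref{cor:H-dec}, Castaing representations for the inclusion $X'\subseteq X$ and for maximality), and that part is fine modulo the routine point that a competitor $X''$ may be empty on part of $\Omega$, which you handle correctly by localising to $\{X''\neq\emptyset\}$.

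The genuine gap is your disposal of the case $\Lnot(X,\cH)=\emptyset$. You claim that the absence of a global $\cH$-measurable selection precludes any non-empty $\cH$-measurable random subset of $X$ on a positive-probability $\cH$-event, and accordingly set $\cm(X|\cH)=\emptyset$. That implication is false: a selection of such a subset $Z$ produced by the measurable selection theorem is only defined on the event $H=\{Z\neq\emptyset\}\in\cH$, and there may be no way to extend it to an element of $\Lnot(X,\cH)$ because $X$ admits no $\cH$-measurable point on $H^c$. Concretely, take $\Omega=\{1,2,3,4\}$ with equal weights, $\cF=2^\Omega$, $\cH$ generated by $B_1=\{1,2\}$ and $B_2=\{3,4\}$, and $X\equiv\{0\}$ on $B_1$, $X(3)=\{1\}$, $X(4)=\{2\}$. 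Then $\Lnot(X,\cH)=\emptyset$ (any $\cH$-measurable selection would have to be a single point lying in both $\{1\}$ and $\{2\}$ on $B_2$), yet the $\cH$-measurable random closed set equal to $\{0\}$ on $B_1$ and $\emptyset$ on $B_2$ is contained in $X$ and is not a.s.\ empty; by Example~\ref{example:core-finite-omega} it is in fact the conditional core. So your candidate $X'=\emptyset$ is not the largest $\cH$-measurable subset and the lemma's existence claim is not established. The paper's proof closes exactly this case by a localisation: it introduces the family $\cI$ of events $H\in\cH$ on which $X$ admits an $\cH\cap H$-measurable local selection, shows $\cI$ is stable under unions, extracts the essential maximal event $H^*$ via $\esssup\{\one_H:H\in\cI\}=\one_{H^*}$, applies the non-empty case on $H^*$, and defines $\cm(X|\cH)$ to be that local core on $H^*$ and $\emptyset$ off $H^*$. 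You need some version of this patching argument for your proof to go through.
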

\begin{proof}
  We first consider the case where $\Lnot(X,\cH)\ne \emptyset$.
  By Theorem~\ref{ReprDecompL0}, $\Lnot(X,\cH)=\Lnot(Y,\cH)$ for an
  $\cH$-measurable random closed set $Y$. Moreover,
  $Y=\cl\{\xi_n,n\geq1\}$ a.s. for $\xi_n\in \Lnot(X,\cH)$,
  $n\geq1$. Since $X$ is closed,
  $Y\subseteq X$ a.s. Since any $\cH$-measurable random set
  $Z\subseteq X$ satisfies $\Lnot(Z,\cH)\subseteq
  \Lnot(Y,\cH)$, we have $Z\subseteq X$ a.s., so that
  $Y=\cm(X|\cH)$.
  If $X$ is convex, then $\Lnot(X,\cH)=\Lnot(Y,\cH)$ is convex, whence
  $Y$ is a random convex set by Theorem~\ref{ReprDecompL0}.
  
  Let us now consider the case $\Lnot(X,\cH)=\emptyset$. Define 
  \begin{displaymath}
    \cI=\{H\in\cH:\; \Lnot(X,\cH\cap H)\ne \emptyset\},
  \end{displaymath}
  where $\Lnot(X,\cH\cap H)$ designates the $\cH\cap H$-measurable
  selections of $X(\omega)$, $\omega\in H$, measurable with respect to
  the trace of $\cH$ on $H$. Observe that $\cI\ne \emptyset$ if and
  only if there exists a closed $\cH$-measurable subset $Z_{\cH}$ of
  $X$ such that $\P\{Z_{\cH}\ne \emptyset\}>0$. If $\cI= \emptyset$,
  we let $\cm(X|\cH)=\emptyset$. Otherwise, note that $H_1,H_2\in
  \cI$ implies that $H_1\cup H_2\in \cI$. Hence 
  \begin{displaymath}
    \zeta=\esssup\{1_H:\; H\in \cI\}=1_{H^*},
  \end{displaymath}
  where $H_n\uparrow H^*$ for $H_n\in \cI$, $n\geq1$. In particular,
  $H^*\in\cI$.  By the result for non-empty cores, we have
  $\Lnot(X,\cH\cap H^*)=\Lnot(X_{\cH},\cH\cap H^*)$ where $X_{\cH}$ is
  a closed $\cH\cap H^*$-measurable subset which is non-empty on
  $H^*$. This is the largest $\cH\cap H^*$-measurable closed subset of
  $X$.
  
  Define $\cm(X|\cH)$ by $\cm(X|\cH)(\omega)=X_{\cH}(\omega)$ if
  $\omega\in H^*$ and $\emptyset$ otherwise. Consider a closed
  $\cH$-measurable subset $Z$ of $X$. The inclusion $Z\subseteq
  \cm(X|\cH)$ is trivial on $\{\omega:~Z(\omega)= \emptyset\}\in
  \cH$. The complement $H$ of this latter set belongs to $\cI$ as soon
  as $\P(H)>0$, using a measurable selection argument. Therefore, we
  may modify $H$ on a non-null set and suppose that $H\subseteq
  H^*$. We deduce by construction of $X_{\cH}$ that $Z\subseteq
  X_{\cH}$ on $H^*$ while this inclusion is trivial on the complement
  $\{\omega:~Z(\omega)= \emptyset\}$. Thus, $\cm(X|\cH)$ is the
  largest closed $\cH$-measurable subset of $X$.
\end{proof}

\begin{lemma}
  \label{lemma:lp-core}
  If $\Lnot[p](X,\cH)\neq\emptyset$ for some $p\in[1,\infty]$ and a random
  closed set $X$, then $\Lnot[p](\cm(X|\cH),\cH)=\Lnot[p](X,\cH)$.
\end{lemma}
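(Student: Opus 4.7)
The plan is to reduce this to Lemma~\ref{H-meas-represent-m(X|H)exists}, which handles the $p=0$ case, and then to observe that $p$-integrability is an intrinsic property of the selection (not of the containing set), so it passes through the identification of selection sets.

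First, I would verify that $\cm(X|\cH)$ is almost surely non-empty. Since $\Lnot[p](X,\cH) \subseteq \Lnot(X,\cH)$, the hypothesis $\Lnot[p](X,\cH) \ne \emptyset$ forces $\Lnot(X,\cH) \ne \emptyset$. Existence of the conditional core for the random closed set $X$ is guaranteed by Lemma~\ref{Exist-closed-case}, and the ``in particular'' clause of Lemma~\ref{H-meas-represent-m(X|H)exists} then yields $\cm(X|\cH) \ne \emptyset$ almost surely, together with the identity $\Lnot(X,\cH) = \Lnot(\cm(X|\cH),\cH)$.

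Next, I intersect both families with $\Lnot[p](\XX,\cH)$. For any $\xi \in \Lnot(\XX,\cH)$, membership in $\Lnot[p](Y,\cH)$ for a random set $Y$ is the conjunction of $\xi \in \Lnot[p](\XX,\cH)$ and $\xi \in Y$ a.s.; the $p$-integrability is a property of the random element, not of the ambient set. Consequently
\begin{displaymath}
  \Lnot[p](X,\cH) = \Lnot(X,\cH)\cap \Lnot[p](\XX,\cH)
  = \Lnot(\cm(X|\cH),\cH)\cap \Lnot[p](\XX,\cH)
  = \Lnot[p](\cm(X|\cH),\cH),
\end{displaymath}
which is the desired equality.

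There is no genuine obstacle here: both inclusions are essentially formal. The inclusion $\Lnot[p](\cm(X|\cH),\cH) \subseteq \Lnot[p](X,\cH)$ is also visible directly from $\cm(X|\cH)\subseteq X$ a.s.; the reverse inclusion is the only substantive content and rests entirely on the already established $p=0$ version. The one point that deserves explicit mention in the write-up is why $\cm(X|\cH)$ is non-empty under the $\Lnot[p]$-hypothesis, since the remark following the definition of the conditional core warns that emptiness is possible in general.
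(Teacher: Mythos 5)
Your proof is correct and rests on the same key fact as the paper's, namely the identity $\Lnot(X,\cH)=\Lnot(\cm(X|\cH),\cH)$ from Lemma~\ref{H-meas-represent-m(X|H)exists} combined with the observation that $p$-integrability is a property of the selection itself. The paper's (very terse) proof additionally invokes a Castaing representation of $\cm(X|\cH)$ by $p$-integrable selections, which your direct intersection argument $\Lnot[p](Y,\cH)=\Lnot(Y,\cH)\cap\Lnot[p](\XX,\cH)$ renders unnecessary; your explicit check that $\cm(X|\cH)$ is a.s.\ non-empty is a worthwhile addition.
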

\begin{proof}
  By the condition, $\cm(X|\cH)$ admits a $p$-integrable selection,
  and so has a Castaing representation consisting of $p$-integrable
  selections, see \cite{hia:ume77} and \cite{mo1}.
\end{proof}

\begin{example}
  \label{example:core-finite-omega}
  If $\cH$ is generated by a partition $\{B_1,\dots,B_m\}$ of a finite
  probability space, then $Y=\cm(X|\cH)$ is obtained by letting
  $Y(\omega)=\cap_{\omega'\in B_i} X(\omega')$ if $\omega\in B_i$,
  $i=1,\dots,m$.
\end{example}

\subsection{Sublinearity of the conditional core}
\label{sec:subadd-prop}

\begin{lemma}
  \label{LemmCondminProduct}
  Let $X$ be an $\cF$-measurable random set. 
  \begin{enumerate}[(i)]
  \item $\lambda X$ is an $\cF$-measurable random set for any
    $\lambda\in \Lnot(\R,\cF)$.
  \item If 
    $\lambda\in \Lnot(\R,\cH)$, then
    \begin{equation}
      \label{eq:scale}
      \cm(\lambda X|\cH)=\lambda\cm(X|\cH).
    \end{equation}
  \item If $X$ is a random closed set and $\cH'$ is a
    sub-$\sigma$-algebra of $\cH$, then
    \begin{displaymath}
      \cm(\cm(X|\cH)|\cH')=\cm(X|\cH').
    \end{displaymath}
  \end{enumerate}
\end{lemma}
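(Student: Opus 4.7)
The plan is to handle each part separately, using the universal property of $\cm(\cdot|\cH)$ as much as possible and reducing measurability to the graph description.

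For (i), I would describe the graph of $\lambda X$ explicitly:
\begin{displaymath}
  \Gr(\lambda X)=\bigl\{(\omega,x):\lambda(\omega)\neq 0,\;
  x/\lambda(\omega)\in X(\omega)\bigr\}\cup
  \bigl\{(\omega,0):\lambda(\omega)=0,\;X(\omega)\neq\emptyset\bigr\}.
\end{displaymath}
The first piece is the preimage of $\Gr X$ under the $\cF\otimes\cB(\XX)$-measurable map $(\omega,x)\mapsto(\omega,x/\lambda(\omega))$ on $\{\lambda\neq0\}\times\XX$, hence lies in $\cF\otimes\cB(\XX)$. For the second piece I would use completeness of $(\Omega,\cF,\P)$ and the projection theorem to note that $\{X\neq\emptyset\}$ is the projection of $\Gr X$ onto $\Omega$ and hence lies in $\cF$; its product with $\{0\}$ is then in $\cF\otimes\cB(\XX)$.

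For (ii), the inclusion $\lambda\cm(X|\cH)\subseteq\cm(\lambda X|\cH)$ is immediate: $\lambda\cm(X|\cH)$ is $\cH$-measurable by part (i) applied with $\cH$ in place of $\cF$, and it is contained in $\lambda X$; maximality of the core then gives the containment. For the reverse inclusion I would argue separately on $\{\lambda\neq0\}\in\cH$ and $\{\lambda=0\}\in\cH$. On the former, apply part (i) to $\lambda^{-1}$ (which is $\cH$-measurable where defined) to see that $\lambda^{-1}\cm(\lambda X|\cH)$ is an $\cH$-measurable subset of $\lambda^{-1}(\lambda X)=X$, hence contained in $\cm(X|\cH)$, and multiply back by $\lambda$. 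On the $\cH$-measurable set $\{\lambda=0\}$ both $\lambda\cm(X|\cH)$ and $\cm(\lambda X|\cH)$ reduce (modulo the convention $0\cdot\emptyset=\emptyset$) to $\{0\}$ precisely on the $\cH$-measurable trace where the corresponding core is non-empty, which is the main edge case; I would use Lemma~\ref{H-meas-represent-m(X|H)exists} to identify the relevant non-emptiness sets through $\Lnot(\cdot,\cH)$ and check that they agree.

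For (iii), I would use maximality twice. First, $\cm(X|\cH')$ is $\cH'$-measurable, hence $\cH$-measurable, and is contained in $X$, so by maximality it is contained in $\cm(X|\cH)$; being $\cH'$-measurable and contained in $\cm(X|\cH)$, it is contained in $\cm(\cm(X|\cH)|\cH')$ by another application of maximality. Conversely, $\cm(\cm(X|\cH)|\cH')$ is $\cH'$-measurable and contained in $\cm(X|\cH)\subseteq X$, so by maximality it is contained in $\cm(X|\cH')$.

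The routine steps are (i) and (iii); the delicate point I expect to spend most effort on is the $\{\lambda=0\}$ part of (ii), since scaling by zero collapses sets and can create a mismatch between the emptiness locus of $\cm(X|\cH)$ and that of $\cm(\lambda X|\cH)$. Clarifying the convention $0\cdot\emptyset=\emptyset$ and using the selection-level identity in Lemma~\ref{H-meas-represent-m(X|H)exists} to match these loci is the key bookkeeping task.
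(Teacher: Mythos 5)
Your parts (i) and (iii) are correct, and your treatment of (ii) on $\{\lambda\neq0\}$ coincides with the paper's argument (divide by $\lambda$, invoke maximality, multiply back). The only genuine difference of route is in (iii): the paper verifies the identity at the level of selections, writing $\Lnot(\cm(\cm(X|\cH)|\cH'),\cH')=\Lnot(\cm(X|\cH),\cH)\cap\Lnot(\XX,\cH')=\Lnot(X,\cH')=\Lnot(\cm(X|\cH'),\cH')$ via Lemma~\ref{H-meas-represent-m(X|H)exists} and then identifies the sets through Theorem~\ref{ReprDecompL0}, whereas you apply maximality twice directly. Your version is shorter and works once the existence of the three cores is secured by Lemma~\ref{Exist-closed-case} (note that $\cm(X|\cH)$ is again a random closed set). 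One structural remark on (ii): since the statement implicitly asserts that $\cm(\lambda X|\cH)$ \emph{exists}, you should run the reverse inclusion with an arbitrary $\cH$-measurable $X'\subseteq\lambda X$ rather than with $\cm(\lambda X|\cH)$ itself, exactly as the paper does; the computation is otherwise identical.

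The one step that does not close as written is the deferred check on $\{\lambda=0\}$. There $\cm(\lambda X|\cH)$ equals $\{0\}$ on the largest $\cH$-measurable subset of $\{X\neq\emptyset\}\cap\{\lambda=0\}$, while $\lambda\cm(X|\cH)$ equals $\{0\}$ only on $\{\cm(X|\cH)\neq\emptyset\}\cap\{\lambda=0\}$, which by Lemma~\ref{H-meas-represent-m(X|H)exists} is governed by $\Lnot(X,\cH)$ and can be strictly smaller: take $X=\{\xi\}$ a.s.\ non-empty with $\xi$ admitting no $\cH$-measurable version on any non-null set, and $\lambda\equiv0$; then $\cm(\lambda X|\cH)=\{0\}$ a.s.\ but $\lambda\cm(X|\cH)=\emptyset$. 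So the two non-emptiness loci you propose to match need not agree, and \eqref{eq:scale} can actually fail on $\{\lambda=0\}$ in this generality. To be fair, the paper's own proof has the same blind spot: its auxiliary set $X''=\lambda^{-1}X'\one_{\lambda\ne0}+X\one_{\lambda=0}$ is declared $\cH$-measurable although it equals $X$ on $\{\lambda=0\}$. The clean fix is to read (ii) on $\{\lambda\neq0\}$, or under the standing assumption that $\cm(X|\cH)$ is a.s.\ non-empty (equivalently $\Lnot(X,\cH)\neq\emptyset$), in which case both sides reduce to $\{0\}$ on all of $\{\lambda=0\}$ and your bookkeeping goes through.
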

\begin{proof}
  (i) Since 
  \begin{displaymath}
    \Gr(\lambda X)=(\{\lambda=0\}\times\{0\})
    \cup(\Gr(\lambda X)\cap(\{\lambda\neq 0\}\times\XX)),
  \end{displaymath}
  it suffices to assume that $\lambda\neq 0$ a.s.  The measurability
  of $\lambda X$ is immediate, since the map
  $\phi:(\omega,x)\mapsto (\omega,\lambda^{-1} x)$ is measurable and
  $\Gr(\lambda X)=\phi^{-1}(\Gr X)$.
  
  (ii)
  Observe that $\lambda\cm(X|\cH)$ is $\cH$-measurable if
  $\lambda\in \Lnot(\R,\cH)$ and $\lambda\cm(X|\cH)\subseteq
  \lambda X$. Suppose that $X'\subseteq \lambda X$ is
  $\cH$-measurable. Then
  \begin{displaymath}
    X''=\lambda^{-1}X'\one_{\lambda\ne 0}+X\one_{\lambda=0}\subseteq X
  \end{displaymath}
  is $\cH$-measurable. Therefore, $X''\subseteq
  \cm(X|\cH)$, so that $X'\subseteq
  \lambda\cm(X|\cH)$. Thus, $\cm(\lambda X|\cH)$ exits and
  \eqref{eq:scale} holds.

  (iii) By Lemma~\ref{H-meas-represent-m(X|H)exists},
  \begin{align*}
    \Lnot(\cm(\cm(X|\cH)|\cH'),\cH')&=\Lnot(\cm(X|\cH),\cH)\cap
    \Lnot(\XX,\cH')\\
    &=\Lnot(X,\cH')=\Lnot(\cm(X|\cH'),\cH'). \qedhere
  \end{align*}
\end{proof}

The following result establishes that the conditional core is
subadditive for the reverse inclusion ordering; together with
Lemma~\ref{LemmCondminProduct}(ii), they mean that the conditional core
is a set-valued conditional \emph{sublinear expectation}.

\begin{lemma} 
  \label{LemmCondminSum} 
  Let $X$ and $Y$ be set-valued mappings.
  Then
  \begin{equation}
    \label{eq:2}
    \cm(X|\cH)+ \cm(Y|\cH)\subseteq  \cm(X+Y|\cH).
  \end{equation}
\end{lemma}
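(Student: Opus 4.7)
The plan is to invoke the maximality property built into the definition of the conditional core. Writing $X' = \cm(X|\cH)$ and $Y' = \cm(Y|\cH)$, it suffices to verify two properties of the pointwise sum $X' + Y'$:
\begin{enumerate}[(i)]
\item $X' + Y' \subseteq X + Y$ almost surely;
\item $X' + Y'$ is an $\cH$-measurable random set.
\end{enumerate}
Granted (i) and (ii), since $\cm(X+Y|\cH)$ is by definition the largest $\cH$-measurable random set contained in $X + Y$, one immediately obtains $X' + Y' \subseteq \cm(X+Y|\cH)$, which is exactly \eqref{eq:2}.

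Property (i) is routine and purely pointwise: if $x \in X'(\omega) \subseteq X(\omega)$ and $y \in Y'(\omega) \subseteq Y(\omega)$ for almost every $\omega$, then $x + y \in X(\omega) + Y(\omega)$. If either $X'$ or $Y'$ is a.s. empty, the left-hand side of \eqref{eq:2} is a.s. empty and the inclusion is trivial, so nothing further is needed in that case.

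The main obstacle is property (ii), the $\cH$-measurability of the sum. My approach would be to express the graph as
\begin{displaymath}
\Gr(X' + Y') = \{(\omega, z)\in\Omega\times\XX : \exists\, (x,y),\ (\omega, x) \in \Gr X',\ (\omega, y) \in \Gr Y',\ x + y = z\},
\end{displaymath}
realizing it as the projection onto $\Omega\times\XX$ of the set
\begin{displaymath}
\{(\omega, z, x, y) : (\omega, x) \in \Gr X',\ (\omega, y) \in \Gr Y',\ z = x + y\},
\end{displaymath}
which lies in $\cH \otimes \cB(\XX)\otimes\cB(\XX)\otimes\cB(\XX)$. The projection theorem, already used in the proof of Proposition~\ref{MeasurVersClos} and enabled by the completeness of $(\Omega, \cF, \P)$, then places $\Gr(X' + Y')$ in the $\P$-completion of $\cH \otimes \cB(\XX)$; this is enough to make $X' + Y'$ an $\cH$-measurable random set in the paper's conventions. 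The technical subtlety worth flagging is the passage from $\cF$-measurability of the projection to $\cH$-measurability, which requires working with the $\P$-completion of $\cH$ inside $\cF$. Once (ii) is secured, (i) and the maximality of $\cm(X+Y|\cH)$ close the argument.
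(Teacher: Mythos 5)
Your overall strategy---exhibit $\cm(X|\cH)+\cm(Y|\cH)$ as an $\cH$-measurable random set contained in $X+Y$ and then invoke the maximality built into the definition of $\cm(X+Y|\cH)$---is sound, and step (i) is indeed immediate. The whole weight of the proof therefore rests on step (ii), and this is where your argument does not quite close. The projection you describe places $\Gr(X'+Y')$ only in an enlargement of $\cH\otimes\cB(\XX)$: projecting a set from $\cH\otimes\cB(\XX)\otimes\cB(\XX\times\XX)$ onto $\Omega\times\XX$ yields an $\cH\otimes\cB(\XX)$-analytic, hence universally measurable, set, not a member of $\cH\otimes\cB(\XX)$ itself. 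The projection theorem as used in Proposition~\ref{MeasurVersClos} projects onto the \emph{complete} space $(\Omega,\cF,\P)$; here the base is $(\Omega\times\XX,\cH\otimes\cB(\XX))$, which carries no completeness, and $\cH$ is an arbitrary sub-$\sigma$-algebra not assumed to contain the $\P$-null sets (the completeness hypothesis is on $(\Omega,\cF,\P)$ only). Since the paper's definition of an $\cH$-measurable random set requires the graph to lie in the product $\sigma$-algebra $\cH\otimes\cB(\XX)$ itself, $X'+Y'$ has not been shown to be such a set, and the maximality argument cannot yet be applied to it as a whole.

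The paper sidesteps this entirely by arguing at the level of selections: for $\gamma'\in\Lnot(\cm(X|\cH),\cH)$ and $\gamma''\in\Lnot(\cm(Y|\cH),\cH)$, the sum $\gamma'+\gamma''$ is an $\cH$-measurable selection of $X+Y$, hence by \eqref{eq:6} a selection of $\cm(X+Y|\cH)$; combining this with Castaing representations $\cm(X|\cH)=\cl\{\xi_i,i\geq1\}$ and $\cm(Y|\cH)=\cl\{\eta_j,j\geq1\}$ and the closedness of $\cm(X+Y|\cH)$ yields \eqref{eq:2} without ever deciding whether $X'+Y'$ is graph measurable. Maximality is still the engine, but it is applied only to the singletons $\{\gamma'+\gamma''\}$, via Lemma~\ref{H-meas-represent-m(X|H)exists}, rather than to the sum set. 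I would replace your projection argument by this selection argument; otherwise you must additionally show that a universally measurable graph admits an $\cH\otimes\cB(\XX)$-measurable modification on a null set, which is more work than the statement warrants.
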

\begin{proof}
  The inclusion is trivial on the $\cH$-measurable subset where one of
  the conditional cores on the left-hand side of \eqref{eq:2} is
  empty, since then the sum is also empty.  If $\gamma'\in
  \Lnot(\cm(X|\cH),\cH)$ and $\gamma''\in \Lnot(\cm(Y|\cH),\cH)$, then
  \eqref{eq:6} yields that
  \begin{displaymath}
    \gamma'+\gamma''\in
    \Lnot(X+Y,\cH)=\Lnot(\cm(X+Y|\cH),\cH).
    \qedhere
  \end{displaymath}
\end{proof}

\begin{example}
  \label{ex:H-summand}
  Let $\cH$ be trivial, and let $X$ be a line in the plane passing
  through the origin with a random direction, so that
  $\cm(X|\cH)=\{0\}$. If $Y$ is a deterministic centred ball, then the
  set of fixed points of $X+Y$ may be strictly larger than
  $\{0\}+Y=Y$.
\end{example}

While Example~\ref{ex:H-summand} shows that \eqref{eq:2} does not
necessarily turn into the equality if one of the summands is
$\cH$-measurable, the equality holds if one of the summands is an 
$\cH$-measurable \emph{singleton}.

\begin{lemma}
  \label{lem:sing-sum}
  If $X$ is a random closed set and $\eta\in\Lnot(\XX,\cH)$, then
  \begin{displaymath}
    \cm(X+\{\eta\}|\cH)=\cm(X|\cH)+\eta.
  \end{displaymath}
\end{lemma}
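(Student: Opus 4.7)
The plan is to prove the two inclusions separately; both sides are already known to exist as random closed sets, since $X+\{\eta\}$ is closed (being a translate of the closed set $X$ by a single point) and $X$ is closed, so Lemma~\ref{Exist-closed-case} applies.

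For the inclusion $\cm(X|\cH)+\eta\subseteq\cm(X+\{\eta\}|\cH)$, I would first observe that $\{\eta\}$ is itself an $\cH$-measurable random (closed) set, and that it is contained in $\{\eta\}$ trivially, so $\cm(\{\eta\}|\cH)=\{\eta\}$. Then a direct application of Lemma~\ref{LemmCondminSum} yields
\begin{displaymath}
  \cm(X|\cH)+\{\eta\}=\cm(X|\cH)+\cm(\{\eta\}|\cH)\subseteq \cm(X+\{\eta\}|\cH).
\end{displaymath}

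For the reverse inclusion, set $Z\eqdef\cm(X+\{\eta\}|\cH)$. The idea is to translate $Z$ back by $-\eta$ and show the result is an $\cH$-measurable subset of $X$, so that by the defining maximality of $\cm(X|\cH)$ it must lie inside $\cm(X|\cH)$. Concretely, $Z-\{\eta\}$ is $\cH$-measurable because $\eta\in\Lnot(\XX,\cH)$: the map $(\omega,x)\mapsto(\omega,x+\eta(\omega))$ is $\cH\otimes\cB(\XX)$-measurable, and $\Gr(Z-\{\eta\})$ is its preimage of $\Gr Z\in\cH\otimes\cB(\XX)$. Since $Z\subseteq X+\{\eta\}$ a.s., translating by $-\eta$ gives $Z-\{\eta\}\subseteq X$ a.s. By maximality of $\cm(X|\cH)$, we conclude $Z-\{\eta\}\subseteq\cm(X|\cH)$, i.e.\ $Z\subseteq \cm(X|\cH)+\eta$.

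The only subtle point is the graph-measurability of translates, but this is routine from the product measurability of $(\omega,x)\mapsto x\pm\eta(\omega)$; there is no real obstacle here, since the asymmetry between singletons and general summands (illustrated in Example~\ref{ex:H-summand}) is exactly what makes the translation trick work: one does not need $\cm(\{\eta\}|\cH)+\cm(X|\cH)\supseteq\cm(X+\{\eta\}|\cH)$ in the general decomposition sense, only the elementary fact that translation by an $\cH$-measurable element is an $\cH$-measurable bijection on set-valued maps, preserving inclusions in $X$.
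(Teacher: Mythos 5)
Your proof is correct. The easy inclusion is handled exactly as in the paper, via Lemma~\ref{LemmCondminSum} applied to $X$ and the $\cH$-measurable singleton $\{\eta\}$. For the harder inclusion $\cm(X+\{\eta\}|\cH)\subseteq\cm(X|\cH)+\eta$ you take a slightly different route: the paper argues through selections, noting that any $\gamma\in\Lnot(\cm(X+\{\eta\}|\cH),\cH)=\Lnot(X+\{\eta\},\cH)$ gives $\gamma-\eta\in\Lnot(X,\cH)=\Lnot(\cm(X|\cH),\cH)$ by \eqref{eq:6}, whereas you work directly with the set-valued map, showing that $Z-\{\eta\}$ is graph-measurable with respect to $\cH$ and contained in $X$, and then invoking the defining maximality of the conditional core. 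Both arguments rest on the same observation --- translation by an $\cH$-measurable point preserves $\cH$-measurability --- but yours bypasses the selection machinery entirely (in particular it does not need Lemma~\ref{H-meas-represent-m(X|H)exists} or the nonemptiness caveat implicit in it), at the modest cost of verifying the product-measurability of $(\omega,x)\mapsto x+\eta(\omega)$, which you do correctly. The paper's version is shorter because the identity $\Lnot(X,\cH)=\Lnot(\cm(X|\cH),\cH)$ is already available; yours is marginally more self-contained and applies verbatim even on the event where the core is empty.
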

\begin{proof}
  Consider $\xi+\eta\in\Lnot(\cm(X+\{\eta\}|\cH),\cH)$. Then $\xi$ is
  also $\cH$-measurable, so that $\xi\in\Lnot(\cm(X|\cH),\cH)$. The
  opposite inclusion follows from Lemma~\ref{LemmCondminSum}. 
\end{proof}

\begin{lemma} 
  Let $X$ be a random closed set such that there exists a
  $\gamma\in\Lnot(X,\cH)$.  Let $X^n$ be the intersection of $X$ with
  the closed ball of radius $n\geq1$ centred at $\gamma$. Then
  \begin{displaymath}
    \cm(X|\cH)=\cl\bigcup_{n\geq1} \cm(X^n|\cH).
  \end{displaymath}
\end{lemma}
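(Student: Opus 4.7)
The plan is to prove both set inclusions separately, exploiting Lemma~\ref{H-meas-represent-m(X|H)exists} to translate between $\cH$-measurable selections and the conditional core.

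First I would record that each $X^n$ is an $\cF$-measurable random closed set (intersection of $X$ with an $\cH$-measurable closed ball), and that $\gamma\in\Lnot(X^n,\cH)$ since $\|\gamma-\gamma\|=0\le n$. Hence $\cm(X^n|\cH)$ exists, is a.s.\ non-empty, and contains $\gamma$ by Lemma~\ref{Exist-closed-case} and Lemma~\ref{H-meas-represent-m(X|H)exists}. The inclusion $\cm(X^n|\cH)\subseteq X$ is immediate, so $\cl\bigcup_{n\ge1}\cm(X^n|\cH)$ is an $\cH$-measurable random closed set (measurability being inherited from the countable union and its pointwise closure via Proposition~\ref{MeasurVersClos}) contained in $X$. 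By maximality of $\cm(X|\cH)$, this yields the inclusion $\supseteq$.

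For the reverse inclusion, I would fix a Castaing representation $\cm(X|\cH)=\cl\{\xi_i,\,i\ge1\}$ with $\xi_i\in\Lnot(\cm(X|\cH),\cH)=\Lnot(X,\cH)$, and truncate each $\xi_i$ into $X^n$ using $\gamma$. Concretely, define
\begin{displaymath}
  \xi_{i,n}\eqdef \xi_i\one_{\{\|\xi_i-\gamma\|\le n\}}
  +\gamma\one_{\{\|\xi_i-\gamma\|>n\}}.
\end{displaymath}
Since $\xi_i$ and $\gamma$ are $\cH$-measurable, $\xi_{i,n}$ is $\cH$-measurable. On $\{\|\xi_i-\gamma\|\le n\}$ we have $\xi_{i,n}=\xi_i\in X\cap\bar B(\gamma,n)=X^n$, and on the complement $\xi_{i,n}=\gamma\in X^n$; hence $\xi_{i,n}\in\Lnot(X^n,\cH)=\Lnot(\cm(X^n|\cH),\cH)$ by Lemma~\ref{H-meas-represent-m(X|H)exists}.

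Because $\|\xi_i-\gamma\|<\infty$ a.s., we have $\xi_{i,n}\to\xi_i$ pointwise a.s.\ as $n\to\infty$, so $\xi_i(\omega)\in\cl\bigcup_{n\ge1}\cm(X^n|\cH)(\omega)$ for almost every $\omega$ and every $i$. Taking the closure of $\{\xi_i\}$ yields $\cm(X|\cH)\subseteq\cl\bigcup_{n\ge1}\cm(X^n|\cH)$ a.s., completing the proof. The only subtle point, and the one I would double-check, is the measurability of the countable union of the $\cH$-measurable random sets $\cm(X^n|\cH)$ together with its pointwise closure; this is precisely what Proposition~\ref{MeasurVersClos1} (applied with $\cH$ in place of $\cF$) guarantees, so no genuine obstacle arises.
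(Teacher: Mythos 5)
Your proposal is correct and follows essentially the same route as the paper: the $\supseteq$ inclusion via maximality of the conditional core, and the $\subseteq$ inclusion via the truncation $\xi\one_{\{\|\xi-\gamma\|\le n\}}+\gamma\one_{\{\|\xi-\gamma\|>n\}}$ of $\cH$-measurable selections into $X^n$. The only cosmetic difference is that you make the Castaing representation explicit where the paper argues selection by selection.
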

\begin{proof}
  Since $\cm(X^n|\cH)\subseteq X^n\subseteq X$, we have 
  $\cl\bigcup_n\cm(X^n|\cH)\subseteq X$, whence
  \begin{displaymath}
    \cl\bigcup_n\cm(X^n|\cH)\subseteq\cm(X|\cH).
  \end{displaymath}
  Reciprocally, consider a selection $\xi$ of $\cm(X|\cH)\subseteq X$
  (otherwise, $\cm(X|\cH)=\emptyset$ and the inclusion is
  trivial). Then
  \begin{displaymath}
    \xi^n\eqdef\xi\one_{\|\xi-\gamma\|\le n}+\gamma \one_{\|\xi-\gamma\|> n}\in
    \Lnot(X^n,\cH),
  \end{displaymath}
  so that $\xi^n\in \cm(X^n|\cH)$ a.s. Letting $n\to \infty$ finishes
  the proof. 
\end{proof}

The strong upper limit $\slimsup X_n$ of a sequence
$\{X_n,n\geq1\}$ of random sets is defined as the set of limits for
each almost surely strongly convergent sequence
$\xi_{n_k}\in\Lnot(X_{n_k},\cF)$, $k\geq1$.

\begin{proposition}
  \label{prop:slimsup}
  If $\slimsup X_n\subset X$ a.s. for a sequence of random
  closed sets $\{X_n,n\geq1\}$ and a random closed set $X$, then 
  \begin{equation}
    \label{eq:7}
    \slimsup \cm(X_n|\cH)\subset \cm(X|\cH) \quad \text{a.s.}
  \end{equation}
\end{proposition}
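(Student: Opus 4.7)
The plan is to exhibit $Z := \slimsup \cm(X_n|\cH)$ as an $\cH$-measurable random closed set that is contained in $X$ almost surely, and then to invoke the maximality property in the definition of $\cm(X|\cH)$ (Lemma~\ref{Exist-closed-case}) to conclude $Z \subset \cm(X|\cH)$ a.s.

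Write $Z_n = \cm(X_n|\cH)$, so that $Z_n \subset X_n$ a.s. First I would check $Z \subset X$ a.s. Any selection of $Z$ arises, by the definition of $\slimsup$, as the a.s. strong limit of some sequence $\xi_{n_k} \in \Lnot(Z_{n_k},\cF)$. Since $Z_{n_k} \subset X_{n_k}$ a.s., these $\xi_{n_k}$ are simultaneously in $\Lnot(X_{n_k},\cF)$, so their limit is a selection of $\slimsup X_n$, which is contained in $X$ a.s. by hypothesis.

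Next I would argue that $Z$ is $\cH$-measurable as a random closed set. Each $Z_n$ is $\cH$-measurable by Lemma~\ref{Exist-closed-case}, and by Proposition~\ref{CountRepresentation} admits a Castaing representation by $\cH$-measurable selections. For each $N \geq 1$, the set $Y_N(\omega) := \cl\bigcup_{n\geq N} Z_n(\omega)$ is then the pointwise closure of a countable union of $\cH$-measurable random sets, hence an $\cH$-measurable random closed set by Proposition~\ref{MeasurVersClos}. The pointwise Kuratowski upper limit $Y := \bigcap_{N\geq 1} Y_N$ is thus an $\cH$-measurable random closed set as well, being a decreasing intersection of such. It remains to identify $Z$ with $Y$ (up to null sets); the inclusion $Z \subset Y$ is clear by the same argument used to prove $Z \subset X$, and the reverse inclusion follows by a standard measurable selection argument in the separable Banach space $\XX$: given a selection $\eta$ of $Y$, for each $k\geq 1$ one can choose an $\cF$-measurable index $n_k(\omega)\geq k$ and a selection $\xi_{n_k} \in \Lnot(Z_{n_k},\cF)$ with $\|\xi_{n_k}-\eta\|\leq 1/k$, so that $\eta$ is realised as an a.s. limit of selections of the $Z_{n_k}$'s.

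Combining the two steps with the maximality of $\cm(X|\cH)$ yields \eqref{eq:7}. I expect the main obstacle to be the last identification $Z = Y$, more precisely the measurable selection argument producing the approximating sequence $\xi_{n_k}$; the other ingredients are direct applications of the structural results from Sections~\ref{sec:decomp-meas-vers} and~\ref{sec:conditional-core}.
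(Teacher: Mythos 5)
Your Step 1 and the final appeal to maximality are fine, but the step you yourself flag as the main obstacle --- identifying $Z:=\slimsup\cm(X_n|\cH)$ with the pointwise Kuratowski upper limit $Y:=\bigcap_{N}\cl\bigcup_{n\geq N}\cm(X_n|\cH)$ --- is a genuine gap, and it cannot be closed by the measurable selection argument you sketch. The paper's $\slimsup$ consists of limits of sequences $\xi_{n_k}\in\Lnot(Z_{n_k},\cF)$ in which each $\xi_{n_k}$ is a selection of the \emph{fixed} set $Z_{n_k}$ and the whole sequence converges a.s.; your construction with an $\cF$-measurable random index $n_k(\omega)$ only yields selections of $\bigcup_{n\geq k}Z_n$, and completing them to selections of the individual $Z_m$ destroys a.s.\ convergence. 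The inclusion $Y\subset Z$ is in fact false in general: take $\cH=\cF$, $\Omega=[0,1]$ with Lebesgue measure, and $X_n=Z_n=\{\one_{A_n}\}$ with $\{A_n\}$ the ``typewriter'' sequence of shrinking intervals sweeping $[0,1]$. Every a.s.\ convergent subsequence of $\one_{A_n}$ converges to $0$, so $\slimsup X_n=\{0\}$ and the hypothesis holds with $X=\{0\}$, yet $Y=\{0,1\}$ a.s. So $Z\neq Y$, and the fallback of applying maximality to $Y$ instead of $Z$ is also unavailable because $Y\not\subset X$: the hypothesis only controls the selectionwise upper limit of the $X_n$, not their pointwise upper limit.

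The paper's proof avoids the issue by never requiring measurability of $\slimsup\cm(X_n|\cH)$ as a set; it works selection by selection. If $\gamma_{n_k}$ are ($\cH$-measurable) selections of $\cm(X_{n_k}|\cH)$ converging a.s.\ to $\gamma$, then $\gamma$ is $\cH$-measurable as an a.s.\ limit of $\cH$-measurable random elements, and $\gamma\in X$ a.s.\ because $\gamma_{n_k}\in X_{n_k}$ and $\slimsup X_n\subset X$; hence $\gamma\in\Lnot(X,\cH)=\Lnot(\cm(X|\cH),\cH)$ by Lemma~\ref{H-meas-represent-m(X|H)exists}, i.e.\ $\gamma$ is a selection of $\cm(X|\cH)$. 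Since the family of all such limits is closed and decomposable, this selection-level statement already gives the set inclusion \eqref{eq:7}. Replacing your passage through $Y$ by this direct argument repairs the proof.
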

\begin{proof}
  If $\gamma_{n_k}\in \cm(X_{n_k}|\cH)$ and $\gamma_{n_k}\to \gamma$,
  then $\gamma$ is $\cH$-measurable, and almost surely belongs to $X$,
  whence $\gamma$ is a selection of $\cm(X|\cH)$. 
\end{proof}

It should be noted that the reverse inclusion in~\eqref{eq:7} does not
hold, e.g., if $X_n=\{\xi_n\}$ with a non-$\cH$-measurable $\xi_n$ such
that $\xi_n$ a.s. converges to an $\cH$-measurable $\xi$.

\subsection{Essential infimum of the support function}
\label{sec:essent-infim-supp}

It is possible to relate the conditional core of random closed convex
sets to the conditional essential infimum of their support
functions. 

\begin{theorem} 
  \label{thr:sf-cm} 
  Let $X$ be an a.s. non-empty random closed convex set.  Then
  \begin{equation}
    \label{eq:support-inequality}
    h_{\cm(X|\cH)}(\zeta)\le \essinf h_X(\zeta),
    \quad \zeta\in \Lnot(\XX^*,\cH),
  \end{equation}
  and 
  \begin{equation}
    \label{eq:cm-from-zeta-1}
    \cm(X|\cH)=\bigcap_{\zeta\in\Lnot(\XX^*,\cF)}\Big\{x\in\XX:\;
    \essinf \big(h_X(\zeta)-\langle \zeta,x\rangle\big)\geq0\}.
  \end{equation}
  If $\{\zeta_n,n\geq1\}$ is a sequence from \eqref{eq:cr}, then 
  \begin{equation}
    \label{eq:cm-from-zeta}
    \cm(X|\cH)=\bigcap_{n\geq1}\Big\{x\in\XX:\;
    \essinf \big(h_X(\zeta_n)-\langle \zeta_n,x\rangle\big)\geq0\}.
  \end{equation}
  If $X$ is weakly compact, then 
  \begin{equation}
    \label{eq:zeta-intersection}
    \cm(X|\cH)=\bigcap_{u\in \XX^*_0}\{x\in \XX:\;
    \langle u,x\rangle\leq \essinf h_X(u)\},
  \end{equation}
  and the intersection can be taken over all $u\in\XX^*$. 
\end{theorem}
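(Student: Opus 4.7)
My plan is to prove the four statements through selection-based arguments that rely on the countable half-space representation of Theorem~\ref{thr:countable} and the defining properties of the conditional essential infimum (that $\essinf[\cH] Y\leq Y$ a.s.\ and that $\essinf[\cH] Y$ is the largest $\cH$-measurable a.s.\ minorant of $Y$, recorded in the appendix).

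For \eqref{eq:support-inequality}, I take a Castaing representation $\cm(X|\cH)=\cl\{\xi_i,i\geq1\}$ with $\xi_i\in\Lnot(\cm(X|\cH),\cH)$ (which exists by Lemma~\ref{Exist-closed-case} applied to the $\cH$-measurable set $\cm(X|\cH)$). Lemma~\ref{lem:h-esssup} then gives $h_{\cm(X|\cH)}(\zeta)=\sup_i\langle\zeta,\xi_i\rangle$, which is $\cH$-measurable when $\zeta\in\Lnot(\XX^*,\cH)$. Since $\cm(X|\cH)\subset X$ implies $h_{\cm(X|\cH)}(\zeta)\leq h_X(\zeta)$ a.s., the maximality property of the essinf delivers the claimed bound.

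For \eqref{eq:cm-from-zeta}, I denote its right-hand side by $\tilde X$ and argue via $\cH$-decomposable families of $\cH$-selections. The set
\begin{displaymath}
\tilde\Xi=\{\xi\in\Lnot(\XX,\cH):\essinf[\cH](h_X(\zeta_n)-\langle\zeta_n,\xi\rangle)\geq 0 \text{ a.s., }\forall n\}
\end{displaymath}
is $\cH$-decomposable and closed in $\Lnot$, so by Corollary~\ref{cor:H-dec} there exists an $\cH$-measurable random closed set $\tilde X$ with $\Lnot(\tilde X,\cH)=\tilde\Xi$. To identify $\tilde X=\cm(X|\cH)$, I compare selection families. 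For any $\xi\in\Lnot(\cm(X|\cH),\cH)=\Lnot(X,\cH)$ (by Lemma~\ref{H-meas-represent-m(X|H)exists}), the inequality $h_X(\zeta_n)-\langle\zeta_n,\xi\rangle\geq 0$ a.s.\ makes $0$ an $\cH$-measurable minorant, so the essinf is $\geq 0$ a.s.\ and $\xi\in\tilde\Xi$. Conversely, for $\xi\in\tilde\Xi$, the essinf-minorant inequality yields $h_X(\zeta_n)-\langle\zeta_n,\xi\rangle\geq 0$ a.s.\ for each $n$, and \eqref{eq:cr} then gives $\xi\in X$ a.s., whence $\xi\in\Lnot(\cm(X|\cH),\cH)$. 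Identity \eqref{eq:cm-from-zeta-1} is then immediate: the larger intersection sits between $\cm(X|\cH)$ (by the same minorant argument applied for every $\zeta\in\Lnot(\XX^*,\cF)$) and $\tilde X$, so all three coincide.

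The weakly compact case \eqref{eq:zeta-intersection} uses the second half of Theorem~\ref{thr:countable}: $X=\bigcap_{u\in\XX^*_0}\{x:\langle u,x\rangle\leq h_X(u)\}$, with $h_X(u)$ finite a.s. The same double-inclusion goes through with deterministic $u$; here $\langle u,\xi\rangle$ is automatically $\cH$-measurable for $\xi\in\Lnot(\XX,\cH)$, so $\langle u,\xi\rangle\leq h_X(u)$ a.s.\ directly upgrades to $\langle u,\xi\rangle\leq\essinf[\cH] h_X(u)$ a.s.\ as an $\cH$-measurable minorant. The extension from $\XX^*_0$ to all of $\XX^*$ closes the cycle
\begin{displaymath}
\bigcap_{u\in\XX^*}\{x:\langle u,x\rangle\leq\essinf h_X(u)\}\subset\bigcap_{u\in\XX^*_0}\{x:\langle u,x\rangle\leq\essinf h_X(u)\}\subset\cm(X|\cH)\subset\bigcap_{u\in\XX^*}\{x:\langle u,x\rangle\leq\essinf h_X(u)\}.
\end{displaymath}
The main technical obstacle is making the pointwise formula in \eqref{eq:cm-from-zeta} unambiguous, since $\essinf[\cH](h_X(\zeta_n)-\langle\zeta_n,x\rangle)$ is only defined up to a $\P$-null set for each fixed $x$; my proposed route via $\cH$-decomposable selection families and Corollary~\ref{cor:H-dec} sidesteps this by constructing $\tilde X$ implicitly as the unique $\cH$-measurable random closed set realising the prescribed family of $\cH$-selections, after which the pointwise identity is recovered from the selections.
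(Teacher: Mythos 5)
Your arguments for \eqref{eq:support-inequality}, \eqref{eq:cm-from-zeta-1} and \eqref{eq:zeta-intersection} are correct and essentially coincide with the paper's: the paper proves \eqref{eq:support-inequality} by writing $h_{\cm(X|\cH)}(\zeta)$ as $\esssup\{\langle\zeta,\xi\rangle:\xi\in\Lnot(X,\cH)\}$ via Lemma~\ref{lem:h-esssup} and bounding each term by $\essinf h_X(\zeta)$, which is the same maximality-of-the-minorant argument as your Castaing-representation version, and the weakly compact case is handled by the same double inclusion over $\XX^*_0$.

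For \eqref{eq:cm-from-zeta} your route is genuinely different from the paper's, and this is where a gap remains. You construct an abstract $\cH$-measurable random closed set $\tilde X$ from the $\cH$-decomposable family $\tilde\Xi$ via Corollary~\ref{cor:H-dec} and correctly show $\tilde\Xi=\Lnot(X,\cH)=\Lnot(\cm(X|\cH),\cH)$ (indeed, since $\essinf Y\geq 0$ iff $Y\geq 0$ a.s., your $\tilde\Xi$ is just $\Lnot(X,\cH)$ by \eqref{eq:cr}). But the theorem asserts a \emph{pointwise} intersection formula, and identifying your abstract $\tilde X$ with the displayed right-hand side is exactly the part you defer with ``the pointwise identity is recovered from the selections''; it is not recovered for free. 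One must first exhibit the right-hand side of \eqref{eq:cm-from-zeta} as a well-defined $\cH$-measurable random closed set, and then show its $\cH$-selections are $\tilde\Xi$; the latter requires passing between $\essinf\big(h_X(\zeta_n)-\langle\zeta_n,x\rangle\big)$ evaluated at $x=\xi(\omega)$ and $\essinf\big(h_X(\zeta_n)-\langle\zeta_n,\xi\rangle\big)$ for a random $\cH$-measurable $\xi$, which uses the locality of the conditional essential infimum on $\cH$-measurable partitions together with an approximation of $\xi$ by countably valued selections. The paper's device --- observing that $\eta(x)=\essinf\big(h_X(\zeta_n)-\langle\zeta_n,x\rangle\big)=\essinf h_{X-x}(\zeta_n)$ is Lipschitz in $x$, hence admits a jointly measurable, $\omega$-wise continuous version, so that each set in the intersection is an $\cH$-measurable random closed set --- is precisely what makes both steps work, and cannot be sidestepped. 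With that version in hand, the paper concludes by the maximality of the conditional core rather than by Corollary~\ref{cor:H-dec}; your selection-family formulation is a clean equivalent, but you should carry out the identification rather than assert it.
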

\begin{proof}
  Without loss of generality assume that $\cm(X|\cH)\neq\emptyset$.
  Fix any $\zeta\in \Lnot(\XX^*,\cH)$.  By Lemma~\ref{lem:h-esssup},
  \begin{align*}
    h_{\cm(X|\cH)}(\zeta)&=\esssup\{\langle \zeta,\xi\rangle:\;
    \xi\in \Lnot(\cm(X|\cH),\cH)\}\\
    &=\esssup \{\langle \zeta,\xi\rangle:\;
    \xi\in \Lnot(X,\cH)\}\,.
  \end{align*}
  Moreover, $\cm(X|\cH)\subseteq X \subseteq \{x:~\langle x,
  \zeta\rangle\le h_X(\zeta)\}$.  Thus, $\langle \zeta,\xi\rangle\le
  h_X(\zeta)$ for $\xi\in \Lnot(X,\cH)$. Since $\langle
  \zeta,\xi\rangle$ is $\cH$-measurable, $\langle \zeta,\xi\rangle\le
  \essinf h_X(\zeta)$, so that \eqref{eq:support-inequality} holds. 

  Each selection $\xi$ of $\cm(X|\cH)$ is also a selection of the
  right-hand side of \eqref{eq:cm-from-zeta}, since
  $h_X(\zeta_n)-\langle\zeta_n,\xi\rangle\geq0$ a.s. for all $n$. The function
  \begin{displaymath}
    \eta(x)=\essinf \big(h_X(\zeta_n)-\langle\zeta_n,x\rangle\big)
    =\essinf h_{X-x}(\zeta_n)
  \end{displaymath}
  is Lipschitz in $x$ and so is continuous. Since $\eta(x)$ is
  $\cH$-measurable for all $x$, the function is jointly measurable in
  $(\omega,x)$. Thus, each set on the right-hand side is an
  $\cH$-measurable random closed set, whence the intersection is also
  a random closed set, and the equality follows from the maximality of
  the conditional core. The right-hand side of
  \eqref{eq:cm-from-zeta-1} is a subset of the right-hand side of
  \eqref{eq:cm-from-zeta} and contains $\cm(X|\cH)$, whence
  \eqref{eq:cm-from-zeta-1} holds. 

  By choosing $\zeta=u$ in \eqref{eq:support-inequality}, we see that
  the left-hand side of \eqref{eq:zeta-intersection} is a subset of
  the right-hand one with the intersection taken over all $u\in\XX^*$.
  The right-hand side of \eqref{eq:zeta-intersection} is an
  $\cH$-measurable random closed set denoted by $\tilde{X}$. It suffices
  to assume that $\tilde{X}$ is a.s. non-empty and consider its
  $\cH$-measurable selection $\gamma$. For all $u\in\XX^*_0$,
  \begin{displaymath}
    \langle u,\gamma\rangle \leq \essinf h_X(u) \leq h_X(u),
  \end{displaymath}
  whence $\gamma\in X$ a.s., and $\tilde{X}\subset \cm(X|\cH)$ a.s.
\end{proof}

Equation \eqref{eq:cm-from-zeta-1} may be thought as the dual
representation of the conditional core.

\begin{example}
  The inequality in \eqref{eq:support-inequality} can be strict. Let
  $X$ be a line in the plane $\XX=\R^2$ passing through the origin
  with the normal vector $\zeta$ having a non-atomic distribution and
  such that $\langle x,\zeta\rangle$ is not $\cH$-measurable for ant
  $x\neq0$. Furthermore, assume that $\cH$ contains all null-events
  from $\cF$. Then the only $\cH$-measurable selection of $X$ is the
  origin, so that the left-hand side of \eqref{eq:support-inequality}
  vanishes. For each deterministic (and so $\cH$-measurable)
  non-vanishing $u$, we have $h_X(u)=\infty$ a.s., so that the 
  right-hand side of \eqref{eq:support-inequality} is infinite.  Still
  \eqref{eq:cm-from-zeta} holds with $\zeta_1=\zeta$ and
  $\zeta_2=-\zeta$. Indeed, then $\langle \zeta,x\rangle=0$ a.s., which
  is only possible for $x=0$.
\end{example}

Note that $\essinf h_X(u)$ is not necessarily subadditive as function
of $u$ and so may fail to be a support function. Recall that the
conjugate of a function $f:\XX\mapsto(-\infty,\infty]$ is defined by 
\begin{displaymath}
  f^o(u)=\sup_{x\in\XX}\Big(\langle u,x\rangle- f(x)\Big),\quad u\in\XX^*,
\end{displaymath}
and the biconjugate of $f$ is the conjugate of $f^o:\XX^*\mapsto(-\infty,\infty]$. 

\begin{proposition}
  \label{prop:conjugate}
  Let $X$ be a random convex closed set. Then the support function of
  $\cm(X|\cH)$ is the largest $\cH$-measurable lower
  semicontinuous sublinear function $h:\XX^*\mapsto(-\infty,\infty]$
  such that \eqref{eq:support-inequality} holds. If $X$ is weakly 
  compact, then the support function of $\cm(X|\cH)$ is the
  biconjugate function to $\essinf h_X(u)$, $u\in\XX^*$.
\end{proposition}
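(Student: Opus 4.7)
The plan is to address the two assertions separately by exploiting the representations of $\cm(X|\cH)$ derived in Theorem~\ref{thr:sf-cm}. I would first note that $h_{\cm(X|\cH)}$ is itself a candidate: by Lemma~\ref{Exist-closed-case} the set $\cm(X|\cH)$ is an $\cH$-measurable random closed convex set, so $h_{\cm(X|\cH)}$ is $\cH$-measurable, lsc, and sublinear in the argument, and \eqref{eq:support-inequality} is exactly the required inequality.

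For maximality, let $h:\XX^*\to(-\infty,\infty]$ be any $\cH$-measurable lsc sublinear function satisfying the hypothesis. Its epigraph belongs pointwise to $\cE$ and is $\cH$-measurable in the product space, so Theorem~\ref{thr:epi} produces an $\cH$-measurable random closed convex set $Y$ with $h_Y=h$. The desired inequality $h\le h_{\cm(X|\cH)}$ becomes, via Corollary~\ref{cor:domination}, the inclusion $Y\subset\cm(X|\cH)$ a.s., and by the maximality of $\cm(X|\cH)$ among $\cH$-measurable subsets of $X$ (Lemma~\ref{Exist-closed-case}) this reduces to $Y\subset X$ a.s. I would pick an $\cH$-measurable Castaing representation $\{\xi_i,\,i\ge1\}\subset\Lnot(Y,\cH)$ of $Y$ from Proposition~\ref{CountRepresentation} and aim to show $\xi_i\in X$ a.s.\ for each $i$. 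For every $\zeta\in\Lnot(\XX^*,\cH)$ the hypothesis yields
\begin{displaymath}
  \langle\zeta,\xi_i\rangle\le h_Y(\zeta)=h(\zeta)\le\essinf h_X(\zeta)\le h_X(\zeta)\quad\text{a.s.}
\end{displaymath}
The main obstacle is that Corollary~\ref{cor:all-zeta} a priori characterizes $X$ through $\cF$-measurable $\zeta$, whereas the displayed bound is only available for $\cH$-measurable $\zeta$; the crux is to close this gap using the $\cH$-measurability of $\xi_i$, for example by $\cH$-conditioning any separating $\cF$-measurable $\zeta$ and combining with the convexity of $h_X$ in its argument (a Jensen-type step) to manufacture an $\cH$-measurable separator whose existence would contradict the displayed inequality.

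For the weakly compact case, \eqref{eq:zeta-intersection} already provides the explicit form $\cm(X|\cH)=\{x\in\XX:\langle u,x\rangle\le f(u)\text{ for all }u\in\XX^*\}$ with $f(u)=\essinf h_X(u)$. Since $h_X$ is positively homogeneous in $u$, so is $f$, and consequently
\begin{displaymath}
  f^o(x)=\sup_{u\in\XX^*}\bigl(\langle u,x\rangle-f(u)\bigr)
\end{displaymath}
takes only the values $0$ and $+\infty$, being zero precisely when $\langle u,x\rangle\le f(u)$ for all $u$, that is, when $x\in\cm(X|\cH)$. Thus $f^o$ is the indicator function of $\cm(X|\cH)$, and its Fenchel conjugate $f^{oo}$ is the support function of $\cm(X|\cH)$, identifying $h_{\cm(X|\cH)}$ with $(\essinf h_X)^{oo}$.
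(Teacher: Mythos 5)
Your handling of the two tractable parts is sound. That $h_{\cm(X|\cH)}$ is itself admissible follows from Lemma~\ref{Exist-closed-case} and Theorem~\ref{thr:sf-cm}, as you say; and your biconjugate computation in the weakly compact case is correct and usefully more explicit than the paper's one-line assertion: positive homogeneity of $f=\essinf h_X$ forces $f^o$ to take only the values $0$ and $+\infty$, its zero set is exactly $\bigcap_{u\in\XX^*}\{x:\langle u,x\rangle\le f(u)\}=\cm(X|\cH)$ by \eqref{eq:zeta-intersection}, and hence $f^{oo}=h_{\cm(X|\cH)}$. Note that \eqref{eq:zeta-intersection} also settles the maximality claim when $X$ is weakly compact: any admissible $h$ satisfies $h(u)\le\essinf h_X(u)$ for deterministic $u$, so the set $Y$ with $h_Y=h$ lies in each half-space $\{x:\langle u,x\rangle\le\essinf h_X(u)\}$ and therefore in $\cm(X|\cH)$.

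The obstacle you flag in the general maximality step, however, is not a failure of ingenuity on your part, and no Jensen-type conditioning will close it: the hypothesis \eqref{eq:support-inequality} constrains $h$ only along $\cH$-measurable directions, and this is genuinely weaker than $Y\subset X$. The paper's own example following Theorem~\ref{thr:sf-cm} is a counterexample to the unqualified first claim. Take $\XX=\R^2$, $\cH$ trivial (completed), and $X$ the line through the origin orthogonal to a uniformly distributed unit normal. Every $\zeta\in\Lnot(\XX^*,\cH)$ is a.s.\ constant, and $\essinf h_X(\zeta)=+\infty$ on $\{\zeta\neq0\}$, so \eqref{eq:support-inequality} imposes no constraint off the origin; the largest admissible lower semicontinuous sublinear $h$ is then the support function of all of $\R^2$, while $h_{\cm(X|\cH)}=h_{\{0\}}\equiv0$. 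The paper's proof has exactly the gap you identified: it passes from the largest admissible $h$ to a set $Y$ and invokes ``the definition of the conditional core as the largest $\cH$-measurable subset of $X$'', which presupposes $Y\subset X$ without justification. The statement becomes correct if one either strengthens the hypothesis to $h(\zeta)\le h_X(\zeta)$ a.s.\ for all $\zeta\in\Lnot(\XX^*,\cF)$ (so that Corollary~\ref{cor:domination} yields $Y\subset X$ directly), or imposes the conditions of \eqref{eq:cm-from-zeta-1} indexed by $\cF$-measurable directions, or restricts to weakly compact $X$, where your own argument completes the proof.
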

\begin{proof}
  By Theorem~\ref{thr:sf-cm}, \eqref{eq:support-inequality} holds.
  If $h$ is the largest lower semicontinuous function such
  that \eqref{eq:support-inequality} holds, then it corresponds to 
  an $\cH$-measurable random closed set $Y$. The
  conclusion follows from the definition of the conditional core as
  the largest $\cH$-measurable subset of $X$. 

  If $X$ is weakly compact, then it is possible to let the argument
  of the support function be non-random. The largest sublinear
  function dominated by  that  $\essinf h_X(u)$, $u\in\XX^*$, is its
  biconjugate. 
\end{proof}

\section{Conditional convex hull}
\label{sec:cond-conv-hull}

\subsection{Existence and construction}
\label{sec:definition}

\begin{definition}
  If $X$ is a random set, then its conditional convex hull
  $\CM(X|\cH)$ is the smallest $\cH$-measurable random convex closed
  set which contains $X$.
\end{definition}

\begin{theorem}
  \label{thr:exist-CM}
  If $X$ is a random set, then $\CM(X|\cH)$ exists, and 
  \begin{equation}
    \label{eq:13}
    h_{\CM(X|\cH)}(\zeta)=\esssup h_X(\zeta),\quad
    \zeta\in\Lnot(\XX^*,\cH). 
  \end{equation}
\end{theorem}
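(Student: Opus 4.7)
The plan is to prescribe the support function of $Y:=\CM(X|\cH)$ on $\cH$-measurable arguments by $\Phi(\zeta):=\esssup h_X(\zeta)$ and recover $Y$ itself from the representation theorems of Sections~\ref{sec:decomp-meas-vers} and~\ref{sec:random-convex-sets}. By Lemma~\ref{lem:h-esssup}, $h_X(\zeta)$ is $\cF$-measurable for $\zeta\in\Lnot(\XX^*,\cH)$, so $\Phi(\zeta)$ is a well-defined $\cH$-measurable $[-\infty,\infty]$-valued random variable; sublinearity of $h_X$ in $\zeta$ and its positive $\cH$-homogeneity are preserved by the conditional essential supremum, making $\Phi$ sublinear and $\cH$-positively homogeneous on $\Lnot(\XX^*,\cH)$. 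In particular the $\cH$-measurable half-space $H_\zeta:=\{x:\langle\zeta,x\rangle\le\Phi(\zeta)\}$ contains $X$ a.s., since $\Phi(\zeta)\ge h_X(\zeta)$.

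At the level of selections, introduce
\[
\tilde\Xi:=\{\xi\in\Lnot(\XX,\cF):\langle\zeta,\xi\rangle\le\Phi(\zeta)\text{ a.s.\ for every }\zeta\in\Lnot(\XX^*,\cH)\},
\]
and verify it is convex, $\Lnot$-closed, and $\cF$-decomposable (the defining inequality is preserved by $\cF$-indicator pasting because its right-hand side is $\cH$-measurable, and by a.s.\ limits along subsequences). Corollary~\ref{cor:H-dec} then yields an $\cH$-measurable random convex closed set $Y$ with $\tilde\Xi\cap\Lnot(\XX,\cH)=\Lnot(Y,\cH)$. Minimality of $Y$ among $\cH$-measurable random convex closed sets containing $X$ a.s.\ is the easy half: for any such $Z$, Theorem~\ref{thr:countable} (applied with an $\cH$-measurable Castaing representation of $Z^o$) gives $Z=\bigcap_m\{\langle\eta_m,x\rangle\le h_Z(\eta_m)\}$ with $\eta_m\in\Lnot(\XX^*,\cH)$, and $\cH$-measurability of $h_Z(\eta_m)\ge h_X(\eta_m)$ forces $h_Z(\eta_m)\ge\Phi(\eta_m)$, so every element of $\tilde\Xi\cap\Lnot(\XX,\cH)$ is an $\cH$-measurable selection of $Z$, whence $Y\subset Z$.

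The main obstacle is showing $X\subset Y$ a.s., since $Y$ was characterised through its $\cH$-measurable selections while $X$ is only guaranteed to have $\cF$-measurable ones. I would resolve this by establishing the support-function identity $h_Y(\zeta)=\Phi(\zeta)$ on $\Lnot(\XX^*,\cH)$. The bound $h_Y(\zeta)\le\Phi(\zeta)$ follows from the defining inequality of $\tilde\Xi$ via the $\cH$-analogue of Lemma~\ref{lem:h-esssup}; the reverse bound is the delicate point and requires constructing, for each $\varepsilon>0$, an $\cH$-measurable $\xi_\varepsilon\in\tilde\Xi\cap\Lnot(\XX,\cH)$ with $\langle\zeta,\xi_\varepsilon\rangle\ge\Phi(\zeta)-\varepsilon$ a.s., via a measurable selection argument exploiting the upward directedness of $\{\langle\zeta,\xi\rangle:\xi\in\tilde\Xi\}$ (Remark~\ref{SuppFunctionDirected}) together with the $\cF$-decomposability of $\tilde\Xi$, in the spirit of the argument in the proof of Theorem~\ref{thr:sf-cm}. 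Once $h_Y(\zeta)=\Phi(\zeta)\ge h_X(\zeta)$ holds on $\Lnot(\XX^*,\cH)$, the $\cH$-measurable representation $Y=\bigcap_m\{\langle\eta_m^Y,x\rangle\le h_Y(\eta_m^Y)\}$ from Theorem~\ref{thr:countable} gives, for any $\xi\in\Lnot(X,\cF)$, $\langle\eta_m^Y,\xi\rangle\le h_X(\eta_m^Y)\le\Phi(\eta_m^Y)=h_Y(\eta_m^Y)$ a.s., so $\xi\in\Lnot(Y,\cF)$; Castaing then yields $X\subset Y$ and equation~\eqref{eq:13} falls out as a by-product.
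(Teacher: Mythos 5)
Your construction of $Y$ via Corollary~\ref{cor:H-dec} and your minimality argument are sound, but the step you yourself flag as delicate --- producing, for each $\eps>0$, an $\cH$-measurable $\xi_\eps\in\tilde\Xi\cap\Lnot(\XX,\cH)$ with $\langle\zeta,\xi_\eps\rangle\ge\Phi(\zeta)-\eps$ --- is a genuine gap, and it carries essentially the entire content of the theorem. The upward directedness of Remark~\ref{SuppFunctionDirected} combined with $\cF$-decomposability of $\tilde\Xi$ only yields \emph{$\cF$-measurable} near-maximizers $\xi_n\in\tilde\Xi$ with $\langle\zeta,\xi_n\rangle$ increasing to $\sup\{\langle\zeta,\xi\rangle:\xi\in\tilde\Xi\}$; your sketch contains no mechanism for replacing these by elements of $\tilde\Xi\cap\Lnot(\XX,\cH)$, and it is not even clear a priori that $\tilde\Xi\cap\Lnot(\XX,\cH)$ is non-empty when $X$ is --- that is precisely the assertion that some single $\cH$-measurable point lies in all the half-spaces $H_\zeta$ simultaneously. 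Moreover, even the $\cF$-level supremum need not equal $\Phi(\zeta)$: it equals $h_{Y'}(\zeta)$ for the $\cF$-measurable random closed convex set $Y'$ with $\Lnot(Y',\cF)=\tilde\Xi$, and one only gets $h_X(\zeta)\le h_{Y'}(\zeta)\le\Phi(\zeta)$; upgrading the left inequality to $\Phi(\zeta)\le h_{Y'}(\zeta)$ would require knowing that $h_{Y'}(\zeta)$ is $\cH$-measurable, i.e.\ that the uncountable intersection $\bigcap_\zeta H_\zeta$ admits an $\cH$-measurable version, which is again the heart of the matter. The circularity is visible from the other end: once $X\subset Y$ is known, $h_Y(\zeta)\ge\Phi(\zeta)$ is immediate from the $\cH$-measurability of $h_Y(\zeta)$, so deriving $X\subset Y$ from $h_Y\ge\Phi$ leaves you needing an independent proof of one of the two.

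The paper closes exactly this step by a different device. It passes to the epigraph $\epi h_X$, which by Theorem~\ref{thr:epi} is a random closed convex cone in $\XX^*\times\R$ with values in $\cE$, takes its conditional core $\cm(\epi h_X|\cH)$ (which exists by Lemma~\ref{Exist-closed-case} and again lies in $\cE$), and reads the result back through Theorem~\ref{thr:epi} as the epigraph of the support function of an $\cH$-measurable random convex closed set $Z$. Maximality of the conditional core makes $h_Z$ the smallest $\cH$-measurable support function dominating $h_X$, so $Z=\CM(X|\cH)$; formula~\eqref{eq:13} then follows because $\esssup h_X(\cdot)$ inherits subadditivity and is squeezed between $h_X$ and $h_{\CM(X|\cH)}$ via Corollary~\ref{cor:domination}. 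In short, the existence of the \emph{smallest} dominating set is obtained as the \emph{largest} $\cH$-measurable subset of an epigraph, for which the machinery is already in place; if you want to keep your direct route, you would need to supply an argument of comparable strength for the attainment step.
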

\begin{proof}
  Without loss of generality, assume that $X$ is closed and convex
  (with possibly empty values). Then the epigraph $\epi h_X$ is a
  closed convex cone in $\XX^*\times\R$, so that $\cm(\epi h_X|\cH)$
  exists and is a convex cone by Lemma~\ref{Exist-closed-case}. By
  Theorem~\ref{thr:epi}, $\cm(\epi h_X|\cH)$ is the epigraph of the
  support function of an $\cH$-measurable random closed convex set
  $Z$. The support function of $Z$ is the smallest $\cH$-measurable
  support function that dominates $h_X$ and so $Z=\CM(X|\cH)$ is the smallest
  $\cH$-measurable random closed convex set containing $X$.

  The left-hand side of \eqref{eq:13} is greater than or equal to the
  right-hand one. Since 
  \begin{align*}
    \esssup h_X(u+v)&\leq \esssup(h_X(u)+h_X(v))\\
    &\leq \esssup h_X(u)+\esssup h_X(v)
  \end{align*}
  for all $u,v\in\XX^*$, the essential supremum retains the
  subadditivity property and so is a support function. Thus, the
  right-hand side is a support function of an $\cH$-measurable random
  closed convex set, which is a subset of $\CM(X|\cH)$ by
  Corollary~\ref{cor:domination}. The definition of the conditional
  convex hull yields the equality. 
\end{proof}

By Corollary~\ref{cor:all-zeta}, $\CM(X|\cH)$ equals the intersection
of random half-spaces $\{x:\; \langle \zeta,x\rangle\leq \esssup
h_X(\zeta)\}$ over $\zeta\in\Lnot(\XX^*,\cH)$. If $\CM(X|\cH)$ is
a.s. weakly compact, then it is possible to let $\zeta$ run over
deterministic elements from $\XX^*_0$.

\begin{proposition}
  \label{prop:union}
  If $X^n$ is the intersection of a random closed set $X$ with the
  centred ball of radius $n$, then 
  \begin{equation}
    \label{eq:CM-Union}
    \CM(X|\cH)=\cl\bigcup_n\CM(X^n|\cH).
  \end{equation}
\end{proposition}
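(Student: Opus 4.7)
The plan is to prove the two inclusions separately, writing $Y$ for the right-hand side of \eqref{eq:CM-Union}.

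For $Y\subset\CM(X|\cH)$: since $X^n\subset X$ a.s., and $\CM(X|\cH)$ is itself an $\cH$-measurable random closed convex set containing $X^n$, the minimality in the definition of $\CM(X^n|\cH)$ gives $\CM(X^n|\cH)\subset\CM(X|\cH)$ for every $n\geq1$. Taking the union over $n$ and then the closure --- which is preserved since $\CM(X|\cH)$ is already closed --- yields $Y\subset\CM(X|\cH)$.

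For the reverse inclusion $\CM(X|\cH)\subset Y$, I would verify that $Y$ is itself an $\cH$-measurable random closed convex set containing $X$; the minimality in the definition of $\CM(X|\cH)$ then forces the desired inclusion. Closedness is built into the definition of $Y$. Because $X^n\subset X^{n+1}$, the same minimality argument shows $\CM(X^n|\cH)\subset\CM(X^{n+1}|\cH)$, so $\bigcup_{n\geq1}\CM(X^n|\cH)$ is an increasing union of convex sets and hence convex, and so is its closure. The $\cH$-measurability of $Y$ follows by concatenating a countable Castaing representation of each $\CM(X^n|\cH)$ (available by Proposition~\ref{CountRepresentation} applied in the probability space $(\Omega,\cH,\P)$) into a single countable family of $\cH$-measurable selections whose closure equals $Y$. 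Finally, every $x\in X(\omega)$ satisfies $\|x\|<\infty$, so it lies in some $X^n(\omega)\subset\CM(X^n|\cH)(\omega)\subset Y(\omega)$; hence $X\subset Y$ a.s.

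I expect the only step requiring explicit bookkeeping to be the $\cH$-measurability of $Y$, which is dispatched by the Castaing-representation argument above. An alternative route via support functions and Theorem~\ref{thr:exist-CM} is conceivable --- using monotone convergence of essential suprema to show $h_{\CM(X^n|\cH)}(\zeta)\uparrow h_{\CM(X|\cH)}(\zeta)$ --- but it would require extending the comparison of support functions from $\cH$-measurable to general $\cF$-measurable test elements before invoking Corollary~\ref{cor:domination}, which seems to complicate matters unnecessarily compared with the direct set-theoretic minimality argument.
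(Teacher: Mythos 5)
Your proof is correct and follows essentially the same route as the paper: both directions come from the minimality in the definition of the conditional convex hull, with the paper leaving implicit the verification (which you carry out) that the right-hand side is an $\cH$-measurable closed convex set containing $X$. The only cosmetic caveat is that $\CM(X^n|\cH)$ may be empty on part of $\Omega$ for small $n$, so the Castaing-representation step is cleanest phrased via graph measurability of the countable union together with Proposition~\ref{MeasurVersClos}; this does not affect the argument.
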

\begin{proof}
  If $X\subseteq Y$ for an $\cH$-measurable random closed convex set $Y$,
  then $X^n\subseteq Y$. Thus, $\CM(X^n|\cH) \subseteq Y$ for all $n$,
  whence $\CM(X|\cH) \subseteq Y$.
\end{proof}

\subsection{Superadditivity of the conditional convex hull}
\label{sec:super-cond-conv}

\begin{proposition} 
  \label{coroCondMaxSum} 
  If $X$ and $Y$ are random sets, then 
  \begin{equation}
    \label{eq:3}
    \CM(X+Y|\cH)\subset \cl\Big(\CM(X|\cH)+\CM(Y|\cH)\Big).
  \end{equation}
  If $Y$ is $\cH$-measurable and convex, then 
  \begin{displaymath}
    \CM(X+Y|\cH)=\cl(\CM(X|\cH)+Y).
  \end{displaymath}
\end{proposition}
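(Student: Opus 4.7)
The plan is to treat the two assertions separately, proving the inclusion directly from the extremal definition of the conditional convex hull and then upgrading to equality (under the $\cH$-measurability of $Y$) by a support-function computation.

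For the inclusion, I would avoid support functions entirely. The sets $\CM(X|\cH)$ and $\CM(Y|\cH)$ are $\cH$-measurable random convex closed sets containing $X$ and $Y$ respectively, so the Minkowski sum $\CM(X|\cH)+\CM(Y|\cH)$ is convex, $\cH$-measurable, and contains $X+Y$. The closure $\cl(\CM(X|\cH)+\CM(Y|\cH))$ then remains convex, is a random closed set by the remark at the end of Section~\ref{sec:decomp-meas-vers} (with $\cH$-measurability inherited through the same argument), and still contains $X+Y$. Since $\CM(X+Y|\cH)$ is by definition the smallest $\cH$-measurable random convex closed set containing $X+Y$, the inclusion~\eqref{eq:3} is immediate.

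For the equality statement, first observe that when $Y$ is $\cH$-measurable and convex, $\cl Y$ is an $\cH$-measurable random convex closed set containing $Y$ and is clearly the smallest such, so $\CM(Y|\cH)=\cl Y$. The first inclusion yields $\CM(X+Y|\cH)\subset \cl(\CM(X|\cH)+\cl Y)=\cl(\CM(X|\cH)+Y)$. For the reverse inclusion, I would compute support functions at an arbitrary $\zeta\in\Lnot(\XX^*,\cH)$. Since $Y$ is $\cH$-measurable, $h_Y(\zeta)$ is $\cH$-measurable, so the standard "pull-out" property of the conditional essential supremum (given in the appendix) gives, together with Theorem~\ref{thr:exist-CM},
\begin{align*}
  h_{\CM(X+Y|\cH)}(\zeta)
  &=\esssup h_{X+Y}(\zeta)=\esssup\bigl(h_X(\zeta)+h_Y(\zeta)\bigr)\\
  &=\esssup h_X(\zeta)+h_Y(\zeta)
  = h_{\CM(X|\cH)}(\zeta)+h_Y(\zeta)\\
  &= h_{\CM(X|\cH)+Y}(\zeta)
  = h_{\cl(\CM(X|\cH)+Y)}(\zeta).
\end{align*}

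The main obstacle, and the only subtle step, is converting this support-function equality \emph{at $\cH$-measurable arguments} into set-theoretic equality of the two $\cH$-measurable random convex closed sets $\CM(X+Y|\cH)$ and $\cl(\CM(X|\cH)+Y)$. Corollary~\ref{cor:domination} as stated uses test arguments in $\Lnot(\XX^*,\cF)$, so I would instead repeat the proof of Theorem~\ref{thr:countable} with $\cH$ in place of $\cF$: the polar of an $\cH$-measurable random convex closed set is itself $\cH$-measurable (as in Lemma~\ref{lemma:polar}) and hence admits an $\cH$-measurable Castaing representation, yielding a countable family of $\cH$-measurable $\zeta_n$ such that each of our two sets equals the intersection $\bigcap_n\{x:\langle\zeta_n,x\rangle\le h(\zeta_n)\}$. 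Since the relevant support functions coincide on these $\cH$-measurable $\zeta_n$, the two intersections coincide, giving the reverse inclusion and hence the claimed equality.
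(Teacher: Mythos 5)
Your proof is correct and follows essentially the same route as the paper: the inclusion is obtained from the minimality of the conditional convex hull, and the equality from the support-function identity $\esssup h_{X+Y}(\zeta)=\esssup h_X(\zeta)+h_Y(\zeta)$ supplied by Theorem~\ref{thr:exist-CM}. The only difference is that you explicitly justify passing from equality of support functions at $\cH$-measurable arguments to equality of the two $\cH$-measurable random convex closed sets (via an $\cH$-measurable version of Theorem~\ref{thr:countable} and Corollary~\ref{cor:domination}), a step the paper leaves implicit.
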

\begin{proof}
  For \eqref{eq:3}, it suffices to note that the right-hand side is a
  convex closed set that is $\cH$-measurable and contains
  $X+Y$. The second statement follows from \eqref{eq:13}, since
  \begin{align*}
    h_{\CM(X+Y|\cH)}(u)&=\esssup h_{X+Y}(u),\\
    &=\esssup (h_X(u))+h_Y(u)=h_{\CM(X|\cH)}(u)+h_Y(u). \qedhere
  \end{align*}
\end{proof}

Proposition~\ref{coroCondMaxSum} together with the rather obvious
homogeneity property imply that the conditional convex hull is a
set-valued conditional \emph{superlinear expectation}. 

\subsection{Duality between the core and the convex hull}
\label{sec:duality-between-core}

The following result establishes a relationship between the
conditional convex hull and the conditional core assuming that the
random set $X$ contains at least one $\cH$-measurable selection.

\begin{proposition}
  \label{prop:CM-gamma}
  If $\Lnot(X,\cH)\neq\emptyset$, then 
  \begin{displaymath}
    \CM(X-\gamma|\cH)=(\cm((X-\gamma)^o|\cH))^o
  \end{displaymath}
  for any $\gamma\in \Lnot(X,\cH)$.
\end{proposition}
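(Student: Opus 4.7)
The plan is to exploit the classical bipolar duality together with the fact that the polar operation swaps the reverse--inclusion ordering used to define $\cm$ and $\CM$. Writing $Z = X - \gamma$, the hypothesis $\gamma \in \Lnot(X,\cH)$ guarantees that $0 \in Z$ a.s., so the bipolar theorem gives $Z^{oo} = \cl\co(Z)$, and $0$ also lies a.s.\ in every closed convex superset of $Z$, in particular in $\CM(Z|\cH)$, so that $(\CM(Z|\cH))^{oo} = \CM(Z|\cH)$.

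Next I would record the measurability fact behind the whole argument: if $A$ is an $\cH$-measurable random set, then its polar $A^o$ is $\cH$-measurable. Indeed, the Castaing representation argument of Lemma~\ref{lemma:polar} applied in the $\sigma$-algebra $\cH$ produces $\cH$-measurable selections $\zeta_n$, and
\begin{displaymath}
  \Gr A^o = \bigcap_{n\geq1}\{(\omega,u):\; \langle u,\zeta_n(\omega)\rangle \leq 1\} \in \cH\otimes\cB(\XX^*).
\end{displaymath}
Combining this with $\gamma\in\Lnot(\XX,\cH)$, one checks that $(X-\gamma)^o$ is $\cF$-measurable, so $\cm((X-\gamma)^o|\cH)$ is well defined by Lemma~\ref{Exist-closed-case}.

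For the inclusion $\CM(Z|\cH) \supseteq (\cm(Z^o|\cH))^o$, the key observation is that $(\cm(Z^o|\cH))^o$ is an $\cH$-measurable closed convex set. Since $\cm(Z^o|\cH) \subseteq Z^o$, taking polars reverses the inclusion and yields
\begin{displaymath}
  (\cm(Z^o|\cH))^o \supseteq Z^{oo} \supseteq Z.
\end{displaymath}
Thus $(\cm(Z^o|\cH))^o$ is an $\cH$-measurable closed convex superset of $Z$, and minimality of $\CM(Z|\cH)$ gives the inclusion $\CM(Z|\cH) \subseteq (\cm(Z^o|\cH))^o$.

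For the reverse inclusion, observe that $Z \subseteq \CM(Z|\cH)$ implies $(\CM(Z|\cH))^o \subseteq Z^o$, and $(\CM(Z|\cH))^o$ is $\cH$-measurable by the polar measurability fact above. Hence $(\CM(Z|\cH))^o$ is an $\cH$-measurable subset of $Z^o$, and maximality of the conditional core yields $(\CM(Z|\cH))^o \subseteq \cm(Z^o|\cH)$. Taking polars once more and using $(\CM(Z|\cH))^{oo} = \CM(Z|\cH)$ (because $0\in \CM(Z|\cH)$ and $\CM(Z|\cH)$ is closed convex) gives
\begin{displaymath}
  \CM(Z|\cH) = (\CM(Z|\cH))^{oo} \supseteq (\cm(Z^o|\cH))^o,
\end{displaymath}
completing the proof. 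The only delicate point is to verify that the polar of an $\cH$-measurable random set is again $\cH$-measurable, and that $0$ a.s.\ lies in $\CM(Z|\cH)$ so that the bipolar theorem applies without needing to adjoin $\{0\}$; both are minor and follow directly from the hypothesis $\gamma \in \Lnot(X,\cH)$.
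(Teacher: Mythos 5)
Your proof is correct and follows essentially the same route as the paper: reduce to $0\in Z$ a.s., then combine the bipolar theorem with the maximality of the conditional core and the minimality of the conditional convex hull (the paper phrases the first half with an arbitrary $\cH$-measurable convex closed superset $Y$ rather than $\CM(Z|\cH)$ itself, and you add an explicit check that polars of $\cH$-measurable sets are $\cH$-measurable, which the paper leaves implicit). The only blemish is that the inclusion announced at the start of your third paragraph is written in the wrong direction---the argument there actually establishes $\CM(Z|\cH)\subseteq(\cm(Z^o|\cH))^o$, as your own concluding line of that paragraph correctly states.
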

\begin{proof}
  It suffices to assume that $X$ is a random convex closed set and
  $0\in X$ a.s., so let $\gamma=0$ a.s. Then
  $(\cm(X^o|\cH))^o$ contains $X$ a.s. If $Y$ is an
  $\cH$-measurable random convex closed set which contains $X$, then
  $Y^o\subset X^o$ a.s., whence $Y^0\subset \cm(X^o|\cH)$ a.s., and
  $Y$ contains the polar to the latter set.
\end{proof}

A similar duality relationship holds for epigraphs, namely,
\begin{align*}
  \CM(\epi h_X|\cH)&=\epi h_{\cm(X|\cH)},\\
  \cm(\epi h_X|\cH)&=\epi h_{\CM(X|\cH)}.
\end{align*}

\begin{remark}
  Consider a filtration $(\cF_t)_{t=0,\dots,T}$ on
  $(\Omega,\cF,\P)$. An adapted sequence $(X_t)_{t=0,\dots,T}$ of
  random closed set is said to be 
  a \emph{maxingale} if $X_s=\CM(X_t|\cF_s)$
  for all $s\leq t$ the same holds for the
  conditional convex hull. 
  If $X$ is a random closed set, then $X_t=\CM(X|\cF_t)$, $t=0,\dots,T$,
  is a maxingale. Random sets $X_t=(-\infty,\xi_t]$ form a maxingale
  if and only if the sequence $(\xi_t)_{t=0,\dots,T}$ of random
  variables is a maxingale in the sense of \cite{bar:car:jen03}. 
  A similar concept applies for the conditional core.
  If the conditional core (or convex hull) is replaced by the
  expectation, one recovers the concept of a set-valued martingale,
  see \cite{hia:ume77} and \cite[Sec.~5.1]{mo1}.
\end{remark}

\section{Conditional expectation}
\label{sec:expect-cond-expect}

\subsection{Integrable random sets}
\label{sec:integrable-case}

\begin{definition}[see \cite{hia:ume77}]
  Let $X$ be an \emph{integrable} random closed set, that is
  $\Lnot[1](X,\cF)\neq\emptyset$. The conditional expectation
  $\E(X|\cH)$ with respect to a $\sigma$-algebra $\cH\subset\cF$ is
  the random closed set such that 
  \begin{equation}
    \label{eq:4}
    \Lnot[1](\E(X|\cH),\cH) =\cl_1\{\E(\xi|\cH):\;\xi\in\Lnot[1](X,\cF)\}.
  \end{equation}
\end{definition}

The following result shows that it is possible to take the
$\Lnot$-closure in \eqref{eq:4}.

\begin{lemma} 
  \label{lemma:lo-closure}
  $\Lnot(\E(X|\cH),\cH)$ coincides with the
  $\Lnot$-closure of the set
  $\{\E(\xi|\cH):\xi\in\Lnot[1](X,\cF)\}$.
\end{lemma}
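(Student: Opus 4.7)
The plan is to use a standard truncation/decomposability argument together with the fact that $\Lnot[1]$-convergence implies convergence in probability.

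Write $Y = \E(X|\cH)$ and let $\Xi$ denote the set $\{\E(\xi|\cH):\xi\in\Lnot[1](X,\cF)\}$. First I would prove the easy inclusion: every element of $\Xi$ is a selection of $Y$ (this is built into the definition via $\Lnot[1](Y,\cH)$ being the $\Lnot[1]$-closure of $\Xi$, and $\Lnot[1]$-closure lies inside $\Lnot$-closure), so $\Xi \subset \Lnot(Y,\cH)$. Since $Y$ is a random closed set, Theorem~\ref{ReprDecompL0} tells us that $\Lnot(Y,\cH)$ is closed in $\Lnot$. Hence $\cl_0 \Xi \subset \Lnot(Y,\cH)$.

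For the reverse inclusion, fix $\eta \in \Lnot(Y,\cH)$ and any $\gamma_0 \in \Lnot[1](Y,\cH)$ (which is non-empty by integrability of $X$, e.g., pick any $\xi_0 \in \Lnot[1](X,\cF)$ and set $\gamma_0 = \E(\xi_0|\cH)$). For $n \geq 1$ let $B_n = \{\|\eta\| \leq n\} \in \cH$ and define
\begin{displaymath}
  \eta_n \eqdef \eta \one_{B_n} + \gamma_0 \one_{B_n^c}.
\end{displaymath}
By the $\cH$-decomposability of $\Lnot[1](Y,\cH)$ (Theorem~\ref{ReprDecompL0}), $\eta_n \in \Lnot[1](Y,\cH)$. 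By definition of the conditional expectation, each $\eta_n$ lies in $\cl_1 \Xi \subset \cl_0 \Xi$. Since $\eta_n \to \eta$ almost surely as $n \to \infty$ (and thus in probability), and $\cl_0 \Xi$ is closed in $\Lnot$, we conclude $\eta \in \cl_0 \Xi$.

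The only subtlety — which I expect to be the main (though mild) obstacle — is the truncation step: one cannot simply take $\eta \one_{B_n}$ because it need not be a selection of $Y$, and $\eta$ itself need not be integrable. Anchoring the approximation to a fixed integrable selection $\gamma_0$ via $\cH$-decomposability on the event $B_n^c$ is what makes the construction work, and using the integrability assumption on $X$ guarantees the existence of such $\gamma_0$.
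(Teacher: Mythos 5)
Your proof is correct, but it takes a different route from the paper's. The paper handles the reverse inclusion by taking a Castaing representation $\{\xi_i,i\geq1\}$ of $\E(X|\cH)$ with each $\xi_i\in\cl_1\tilde\Xi\subset\cl_0\tilde\Xi$ (Proposition~\ref{CountRepresentation}) and then invoking Lemma~\ref{ApproxSelector} to approximate an arbitrary selection in probability by finite $\cH$-decomposable combinations $\sum_i\one_{A_i}\xi_i$; this implicitly uses that $\cl_0\tilde\Xi$ is decomposable so that those combinations stay inside it. You instead truncate the given selection $\eta$ directly, anchoring it on $\{\|\eta\|>n\}$ to a fixed integrable selection $\gamma_0$, which lands each $\eta_n$ in $\Lnot[1](\E(X|\cH),\cH)=\cl_1\tilde\Xi\subset\cl_0\tilde\Xi$ by the very definition of $\E(X|\cH)$, and then pass to the a.s.\ (hence $\Lnot$) limit. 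Your argument is more elementary: it bypasses the Castaing representation and Lemma~\ref{ApproxSelector} entirely and needs only the closedness of $\cl_0\tilde\Xi$. One cosmetic point: the claim $\eta_n\in\Lnot[1](Y,\cH)$ does not literally follow from the $\cH$-decomposability of $\Lnot[1](Y,\cH)$, since $\eta$ itself need not lie in that ($L^1$) family; but it holds by direct inspection, as $\eta_n$ is $\cH$-measurable, is a.s.\ a selection of $Y$ on each piece of the partition, and is integrable because $\eta\one_{B_n}$ is bounded and $\gamma_0$ is integrable.
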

\begin{proof} 
  By definition, $\tilde
  \Xi=\{\E(\xi|\cH):\;\xi\in\Lnot[1](X,\cF)\}$ is a subset of 
  $\Lnot(\E(X|\cH),\cH)$, which is closed in $\Lnot$ since
  $\E(X|\cH)$ is a.s. closed. Therefore, $\cl_0 \tilde \Xi\subseteq
  \Lnot(\E(X|\cH),\cH)$. By Proposition~\ref{CountRepresentation},
  the random set $\E(X|\cH)$ admits a Castaing representation
  $\{\xi_i, i\geq1\}$, where $\xi_i\in\cl_1\tilde \Xi$ for all
  $i\geq1$. Then $\{\xi_i,i\geq1\}\subseteq \cl_0 \tilde \Xi$, so that
  $\Lnot(\E(X|\cH),\cH)\subseteq \cl_0 \tilde \Xi$ by
  Lemma~\ref{ApproxSelector}.
\end{proof}

\subsection{Generalised conditional expectation of random sets}
\label{sec:gener-cond-expect-1}

The following definition relies on the concept of the generalised
expectation discussed in \ref{sec:gener-cond-expect}. 

\begin{definition}
  Let $X$ be a random closed set and let $\cH$ be a
  sub-$\sigma$-algebra of $\cF$ such that $\Lnot[1_\cH](X,\cF)\ne
  \emptyset$. The generalised conditional expectation $\E^g(X|\cH)$
  is the $\cH$-measurable random closed set such that
  \begin{equation}
    \label{eq:8}
    \Lnot(\E^g(X|\cH),\cH)
    =\cl_0\{\E^g(\xi|\cH):\;\xi\in\Lnot[1_\cH](X,\cF)\}.
  \end{equation}
\end{definition}

The existence of the generalised conditional expectation follows from
Corollary~\ref{cor:H-dec}, since the family on the right-hand side of
\eqref{eq:8} is $\cH$-decomposable. 

\begin{lemma} 
  \label{LemmEquCondExp-G} 
  If $X$ is an \emph{integrable} random closed set, then
  $\E(X|\cH)=\E^g(X|\cH)$.
\end{lemma}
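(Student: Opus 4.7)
The plan is to use the selection-set characterisations~\eqref{eq:4} and~\eqref{eq:8} together with Lemma~\ref{lemma:lo-closure}, and to establish both inclusions by comparing the two generating families. First note that since $X$ is integrable, $\Lnot[1](X,\cF)\subset\Lnot[1_\cH](X,\cF)$, and for every $\xi\in\Lnot[1](X,\cF)$ the generalised conditional expectation coincides with the usual one, $\E^g(\xi|\cH)=\E(\xi|\cH)$. Combining this with Lemma~\ref{lemma:lo-closure} gives the easy inclusion
\begin{displaymath}
  \Lnot(\E(X|\cH),\cH)
  =\cl_0\{\E(\xi|\cH):\xi\in\Lnot[1](X,\cF)\}
  \subset\cl_0\{\E^g(\xi|\cH):\xi\in\Lnot[1_\cH](X,\cF)\}
  =\Lnot(\E^g(X|\cH),\cH),
\end{displaymath}
so that $\E(X|\cH)\subset\E^g(X|\cH)$ a.s.

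For the reverse inclusion, the key step is to approximate, in $\Lnot$, the generalised conditional expectation of an arbitrary $\xi\in\Lnot[1_\cH](X,\cF)$ by ordinary conditional expectations of selections from $\Lnot[1](X,\cF)$. Fix a reference selection $\eta\in\Lnot[1](X,\cF)$ (which exists by integrability of $X$) and set
\begin{displaymath}
  A_n\eqdef\{\E(\|\xi\|\,|\cH)\le n\}\in\cH,\qquad
  \xi_n\eqdef\xi\one_{A_n}+\eta\one_{A_n^c}.
\end{displaymath}
Then $\xi_n$ is an $\cF$-measurable selection of $X$, and $\E\|\xi_n\|\le\E[\one_{A_n}\E(\|\xi\|\,|\cH)]+\E\|\eta\|\le n+\E\|\eta\|<\infty$, so $\xi_n\in\Lnot[1](X,\cF)$. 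Conditioning yields
\begin{displaymath}
  \E(\xi_n|\cH)=\one_{A_n}\E^g(\xi|\cH)+\one_{A_n^c}\E(\eta|\cH),
\end{displaymath}
and since $A_n\uparrow\Omega$ a.s.\ (because $\E(\|\xi\|\,|\cH)<\infty$ a.s.), we obtain $\E(\xi_n|\cH)\to\E^g(\xi|\cH)$ a.s.

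Hence every generator on the right-hand side of~\eqref{eq:8} lies in the $\Lnot$-closure of $\{\E(\eta|\cH):\eta\in\Lnot[1](X,\cF)\}$, which by Lemma~\ref{lemma:lo-closure} equals $\Lnot(\E(X|\cH),\cH)$. Taking $\cl_0$ of both sides (and using that $\Lnot(\E(X|\cH),\cH)$ is closed in $\Lnot$ because $\E(X|\cH)$ is a random closed set) gives $\Lnot(\E^g(X|\cH),\cH)\subset\Lnot(\E(X|\cH),\cH)$, and the two random closed sets must therefore coincide by Lemma~\ref{H-meas-represent-m(X|H)exists} (or directly from Theorem~\ref{ReprDecompL0}). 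I expect the mild technical point to be the verification that $\xi_n\in\Lnot[1](X,\cF)$ and the clean passage from an almost sure pointwise limit of the approximating conditional expectations to membership in the $\Lnot$-closure; everything else is a direct comparison of generating families.
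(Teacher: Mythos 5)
Your proposal is correct and follows essentially the same route as the paper: the paper's proof likewise shows the non-trivial inclusion by patching $\xi\in\Lnot[1_\cH](X,\cF)$ with an integrable reference selection $\gamma$ outside the union $\cup_{i\le n}A_i$ of partition elements (your $\xi_n=\xi\one_{A_n}+\eta\one_{A_n^c}$ is exactly this), observing that the resulting conditional expectations lie in $\E(X|\cH)$, and passing to the a.s.\ limit using the $\Lnot$-closedness of $\Lnot(\E(X|\cH),\cH)$ from Lemma~\ref{lemma:lo-closure}. The only cosmetic difference is your choice of the increasing sets $A_n=\{\E(\|\xi\|\,|\cH)\le n\}$ in place of unions of partition elements, and your explicit write-up of the easy inclusion.
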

\begin{proof} 
  To show the non-trivial inclusion, consider $\xi\in
  \Lnot[1_\cH](X,\cF)$, so that $\E^g(\xi|\cH)=\sum_{i=1}^\infty
  \E(\xi \one_{A_i}|\cH)\one_{A_i}$ for an $\cH$-measurable partition
  $\{A_i, i\geq1\}$, and $\xi\one_{A_i}\in \Lnot[1](\XX,\cF)$ for
  all $i\ge 1$. If $\gamma\in \Lnot[1](X,\cF)$, then
  \begin{displaymath}
    \E^g(\xi|\cH)=\lim_{n\to\infty} \sum_{i=1}^n \Big[\E(\xi \one_{A_i}|\cH)\one_{A_i}
    +\E(\gamma|\cH)1_{\Omega\setminus \cup_{i\le n}A_i}\Big]
    \quad \text{a.s.}
  \end{displaymath}
  Since $\xi\one_{A_i}$ and $\gamma$ are integrable, the sum under the
  limit belongs to $\E(X|\cH)$. Therefore, $\E^g(\xi|\cH)\in\E(X|\cH)$
  a.s. Since $\E(X|\cH)$ is a random closed set, the family
  $\Lnot(\E(X|\cH),\cH)$ is closed in $\Lnot$ by
  Lemma~\ref{lemma:lo-closure}. Thus, $\E^g(X|\cH)\subseteq \E(X|\cH)$
  a.s.
\end{proof}

\begin{lemma}
  \label{Link-GExp-Exp}
  Let $X$ be a random closed set such that $\Lnot[1_\cH](X,\cF)\ne
  \emptyset$. Then, for every $\xi\in \Lnot[1_\cH](X,\cF)$,
  $X-\xi$ is an \emph{integrable} random closed set, and
  \begin{displaymath}
    \E^g(X|\cH)=\E(X-\xi|\cH)+\E^g(\xi|\cH)
    \quad \text{a.s.}
  \end{displaymath}
\end{lemma}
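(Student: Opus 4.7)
The plan is to first establish that $X-\xi$ is integrable so that $\E(X-\xi|\cH)$ is well defined, and then to prove the two set inclusions separately using linearity of $\E^g$ together with the refinement of $\cH$-partitions that witness $1_\cH$-integrability. Since $\xi\in\Lnot(X,\cF)$, the zero random element is a selection of $X-\xi$ that is trivially in $\Lnot[1](X-\xi,\cF)$; hence $X-\xi$ is integrable, and Lemma~\ref{LemmEquCondExp-G} gives $\E^g(X-\xi|\cH)=\E(X-\xi|\cH)$.

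For the inclusion $\E(X-\xi|\cH)+\E^g(\xi|\cH)\subseteq \E^g(X|\cH)$, I take an arbitrary $\zeta\in\Lnot[1](X-\xi,\cF)$. If $\{A_i\}\subset\cH$ is a partition witnessing the $1_\cH$-integrability of $\xi$, then $(\zeta+\xi)\one_{A_i}=\zeta\one_{A_i}+\xi\one_{A_i}$ is integrable on each $A_i$, so $\zeta+\xi\in\Lnot[1_\cH](X,\cF)$. Invoking linearity of $\E^g$ (from the appendix) together with the agreement $\E^g(\zeta|\cH)=\E(\zeta|\cH)$ for integrable $\zeta$, I get
\begin{equation*}
  \E(\zeta|\cH)+\E^g(\xi|\cH)=\E^g(\zeta+\xi|\cH)\in\Lnot(\E^g(X|\cH),\cH).
\end{equation*}
Applying $\cl_0$ and Lemma~\ref{lemma:lo-closure} to $\E(X-\xi|\cH)$, together with the fact that translation by the fixed $\cH$-measurable element $\E^g(\xi|\cH)$ preserves $\Lnot$-closures, yields the desired inclusion.

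For the reverse inclusion, I take $\eta\in\Lnot[1_\cH](X,\cF)$ and write $\eta=(\eta-\xi)+\xi$. A common refinement of the $\cH$-partitions witnessing $1_\cH$-integrability of $\eta$ and of $\xi$ shows that $\eta-\xi\in\Lnot[1_\cH](X-\xi,\cF)$; linearity of $\E^g$ then gives
\begin{equation*}
  \E^g(\eta|\cH)=\E^g(\eta-\xi|\cH)+\E^g(\xi|\cH).
\end{equation*}
By the defining relation \eqref{eq:8} applied to $X-\xi$ together with the first step, $\E^g(\eta-\xi|\cH)$ is an $\cH$-measurable selection of $\E(X-\xi|\cH)$, hence $\E^g(\eta|\cH)$ is a selection of $\E(X-\xi|\cH)+\E^g(\xi|\cH)$ almost surely. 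Since the latter is a random closed set (the sum of a closed set and a singleton), its selection family is $\Lnot$-closed, and taking $\cl_0$ over all such $\eta$ finishes the argument.

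The main obstacle is the book-keeping around $1_\cH$-integrability: the essential inputs are that $\Lnot[1_\cH]$ is closed under sums and differences via the common-refinement trick, and that $\E^g$ is linear on this class — both of which should be collected in the appendix material on the generalised conditional expectation.
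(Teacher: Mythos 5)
Your proposal is correct and follows essentially the same route as the paper: both arguments decompose a selection $\eta$ of $X$ as $(\eta-\xi)+\xi$, use additivity of $\E^g$ on $\Lnot[1_\cH]$ via the common refinement of witnessing partitions, identify $\E(X-\xi|\cH)$ with $\E^g(X-\xi|\cH)$ through Lemma~\ref{LemmEquCondExp-G}, and conclude by $\Lnot$-closedness of the selection families. You are merely more explicit than the paper about why $X-\xi$ is integrable (the zero selection) and about the closure bookkeeping via Lemma~\ref{lemma:lo-closure}, which is a welcome addition rather than a deviation.
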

\begin{proof} 
  Since $X-\xi$ is integrable, $\E(X-\xi|\cH)=\E^g(X-\xi|\cH)$
  by Lemma~\ref{LemmEquCondExp-G}.  Then
  \begin{displaymath}
    \E^g(\eta|\cH)=\E^g(\eta-\xi|\cH)+\E^g(\xi|\cH)\in
    \E^g(X-\xi|\cH)+\E^g(\xi|\cH)\quad \text{a.s.}
  \end{displaymath}
  for all $\eta\in \Lnot[1_\cH](X,\cF)$.  Therefore,
  $\E^g(X|\cH)\subseteq \E(X-\xi|\cH)+\E^g(\xi|\cH)$ a.s. The
  reverse inclusion follows from 
  \begin{displaymath}
    \E^g(X-\xi|\cH)\subseteq \E^g(X|\cH)-\E^g(\xi|\cH). \qedhere
  \end{displaymath}
\end{proof}

\begin{corollary} 
  \label{Exp(X|H)=X} 
  If $X$ is an $\cH$-measurable a.s. non-empty random closed convex
  set, then $\E^g(X|\cH)=X$ a.s.
\end{corollary}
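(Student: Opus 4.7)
The plan is to show the equivalent identity $\Lnot(\E^g(X|\cH),\cH)=\Lnot(X,\cH)$. Since both $\E^g(X|\cH)$ and $X$ are $\cH$-measurable random closed sets, this equality of $\cH$-measurable selection families forces the sets themselves to coincide a.s.\ by Lemma~\ref{H-meas-represent-m(X|H)exists} (or directly by Proposition~\ref{MeasurVersClos1}).

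For the inclusion $\Lnot(X,\cH)\subseteq\Lnot(\E^g(X|\cH),\cH)$, I would pick any $\eta\in\Lnot(X,\cH)$ and note that $\eta$ belongs to $\Lnot[1_\cH](X,\cF)$ by partitioning $\Omega$ into the $\cH$-measurable level sets $\{k-1\le\|\eta\|<k\}$, on each of which $\eta$ is integrable. Since $\eta$ is $\cH$-measurable, $\E^g(\eta|\cH)=\eta$ by the basic properties of the generalised conditional expectation recalled in the Appendix, so $\eta$ appears directly in the generating family on the right-hand side of \eqref{eq:8}.

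For the reverse inclusion, I would reduce to the classical integrable case via Lemma~\ref{Link-GExp-Exp}. Fix any $\xi\in\Lnot(X,\cH)$; then $X-\xi$ is $\cH$-measurable, closed, convex, and contains $0$ a.s., hence is integrable. The lemma gives
\[
\E^g(X|\cH)=\E(X-\xi|\cH)+\E^g(\xi|\cH)=\E(X-\xi|\cH)+\xi,
\]
so it suffices to show that $\E(Y|\cH)=Y$ a.s.\ for any $\cH$-measurable integrable random closed convex set $Y$, applied to $Y=X-\xi$.

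The main obstacle is this last claim, a set-valued analogue of the obvious identity for random vectors. For the nontrivial inclusion $\E(Y|\cH)\subseteq Y$, I would invoke the random half-space representation of Theorem~\ref{thr:countable}: the polar $Y^o$ is also $\cH$-measurable (by Lemma~\ref{lemma:polar} applied to an $\cH$-measurable Castaing representation of $Y$), so its Castaing representation can be chosen in $\Lnot(\XX^*,\cH)$, yielding
\[
Y=\bigcap_{n\geq1}\{x\in\XX:\;\langle\zeta_n,x\rangle\le h_Y(\zeta_n)\},
\]
with each $h_Y(\zeta_n)$ being $\cH$-measurable by Lemma~\ref{lem:h-esssup}. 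For any $\eta\in\Lnot[1](Y,\cF)$, the inequality $\langle\zeta_n,\eta\rangle\le h_Y(\zeta_n)$ holds a.s.; truncating $\zeta_n$ on the $\cH$-measurable sets $\{\|\zeta_n\|\le k\}$ to secure integrability, pulling the $\cH$-measurable factor out of $\E(\cdot|\cH)$, and letting $k\to\infty$ yields $\langle\zeta_n,\E(\eta|\cH)\rangle\le h_Y(\zeta_n)$ a.s., so $\E(\eta|\cH)\in Y$ a.s. The reverse inclusion $Y\subseteq\E(Y|\cH)$ is immediate from a Castaing representation of $Y$ by $\cH$-measurable integrable selections, each fixed by $\E(\cdot|\cH)$, combined with Lemma~\ref{lemma:lo-closure}. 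Putting everything together gives $\E(X-\xi|\cH)=X-\xi$ and hence $\E^g(X|\cH)=X$ a.s.
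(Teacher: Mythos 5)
Your proof is correct and follows the route the paper intends: the corollary is stated there without proof as an immediate consequence of Lemma~\ref{Link-GExp-Exp} applied with $\xi\in\Lnot(X,\cH)\subseteq\Lnot[1_\cH](X,\cF)$, combined with $\E^g(\xi|\cH)=\xi$ and the classical identity $\E(Y|\cH)=Y$ for an $\cH$-measurable integrable closed convex $Y$, which the paper takes from \cite{hia:ume77} and \cite{mo1}. The only difference is that you additionally prove that classical identity from scratch via the $\cH$-measurable half-space representation of Theorem~\ref{thr:countable} and a truncation of the $\zeta_n$; that argument is sound and self-contained, though the paper simply cites it.
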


It is well known, see \cite{hia:ume77} and \cite{mo1}, that if $X$ is
a.s. convex and integrable, then $h_{\E(X|\cH)}(u)=\E(h_X(u)|\cH)$
a.s.  for all $u\in\XX^*$, see \cite[Th.~2.1.47]{mo1}.  The following
result is a generalisation of this fact for possibly random arguments of the
support function. 

\begin{lemma}
  \label{lemm:sup-cond}
  If $\Lnot[1_\cH](X,\cF)\neq\emptyset$, then 
  \begin{displaymath}
    \E^g(h_X(\zeta)|\cH)=h_{\E^g(X|\cH)}(\zeta),\quad \zeta\in\Lnot(\XX^*,\cH).
  \end{displaymath}
\end{lemma}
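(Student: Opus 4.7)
The plan is to establish both inequalities in the claimed identity, leveraging Lemma~\ref{lem:h-esssup} (applied both to $X$ and to the $\cH$-measurable random closed set $\E^g(X|\cH)$) together with the take-out-what's-known and monotone convergence properties of the generalised conditional expectation recalled in the appendix.

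For the easy direction $h_{\E^g(X|\cH)}(\zeta) \leq \E^g(h_X(\zeta)|\cH)$, I would start from the pointwise bound $\langle\zeta,\xi\rangle \leq h_X(\zeta)$ for $\xi \in \Lnot[1_\cH](X,\cF)$, apply $\E^g(\cdot|\cH)$ to both sides, and use take-out-what's-known (valid since $\zeta$ is $\cH$-measurable) to get $\langle\zeta,\E^g(\xi|\cH)\rangle \leq \E^g(h_X(\zeta)|\cH)$ a.s. Taking the essential supremum over such $\xi$ and invoking \eqref{eq:8} together with Lemma~\ref{lem:h-esssup} applied to the $\cH$-measurable set $\E^g(X|\cH)$ yields the desired inequality.

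For the reverse inequality, the key preliminary step is to upgrade Lemma~\ref{lem:h-esssup} by replacing $\Lnot(X,\cF)$ with $\Lnot[1_\cH](X,\cF)$. Fixing $\gamma \in \Lnot[1_\cH](X,\cF)$ (non-empty by hypothesis) and given any $\eta \in \Lnot(X,\cF)$, the truncation
\begin{displaymath}
\eta_n \eqdef \eta\one_{\|\eta\|\leq n} + \gamma\one_{\|\eta\|>n}
\end{displaymath}
stays in $\Lnot[1_\cH](X,\cF)$ (the $\cH$-partition witnessing $\gamma\in\Lnot[1_\cH]$ works for $\eta_n$ since $\|\eta_n\| \leq n + \|\gamma\|$) and satisfies $\langle\zeta,\eta_n\rangle \to \langle\zeta,\eta\rangle$ a.s., giving
\begin{displaymath}
h_X(\zeta) = \esssup[\cF]\{\langle\zeta,\xi\rangle : \xi \in \Lnot[1_\cH](X,\cF)\}.
\end{displaymath}
The combination in Remark~\ref{SuppFunctionDirected} preserves membership in $\Lnot[1_\cH](X,\cF)$ (take a common refinement of the two $\cH$-partitions), so the family is directed upward and there exists a sequence $\xi_n \in \Lnot[1_\cH](X,\cF)$ with $\langle\zeta,\xi_n\rangle \uparrow h_X(\zeta)$ a.s. Applying monotone convergence for $\E^g(\cdot|\cH)$ and take-out-what's-known,
\begin{displaymath}
\E^g(h_X(\zeta)|\cH) = \lim_n \E^g(\langle\zeta,\xi_n\rangle|\cH) = \lim_n \langle\zeta,\E^g(\xi_n|\cH)\rangle \leq h_{\E^g(X|\cH)}(\zeta),
\end{displaymath}
the final step using that $\E^g(\xi_n|\cH)$ is a selection of $\E^g(X|\cH)$ by \eqref{eq:8}.

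The main obstacle I anticipate is the careful verification of take-out-what's-known when the $\cH$-measurable $\zeta$ is unbounded and paired with $\xi\in\Lnot[1_\cH](X,\cF)$: the remedy is to refine the $\cH$-partition $\{A_i\}$ witnessing $\xi\in\Lnot[1_\cH]$ by the $\cH$-measurable level sets $\{k-1<\|\zeta\|\leq k\}$, so that on each piece $\zeta$ is bounded, $\xi$ is integrable, and the classical take-out identity for the ordinary conditional expectation applies; assembling the pieces then yields $\E^g(\langle\zeta,\xi\rangle|\cH) = \langle\zeta,\E^g(\xi|\cH)\rangle$.
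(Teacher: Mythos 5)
Your proof is correct, but the argument for the hard inequality $\E^g(h_X(\zeta)|\cH)\leq h_{\E^g(X|\cH)}(\zeta)$ takes a genuinely different route from the paper's. The paper first normalises by replacing $X$ with $X-\gamma$ so that $0\in X$ a.s.\ (making $h_X(\zeta)\geq 0$ and the conventional conditional expectation applicable), and then, for fixed $\eps>0$ and $c>0$, takes a measurable selection of $X\cap Y$ with $Y=\{x:\langle\zeta,x\rangle\geq h_X(\zeta)-\eps\}\cup\{x:\langle\zeta,x\rangle\geq c\}$, passes to conditional expectations and lets $c\uparrow\infty$, $\eps\downarrow 0$. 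You instead upgrade Lemma~\ref{lem:h-esssup} to the subfamily $\Lnot[1_\cH](X,\cF)$ via the truncation $\eta\one_{\|\eta\|\leq n}+\gamma\one_{\|\eta\|>n}$, exploit upward-directedness to extract an increasing sequence $\langle\zeta,\xi_n\rangle\uparrow h_X(\zeta)$ with $\xi_n\in\Lnot[1_\cH](X,\cF)$, and conclude by conditional monotone convergence together with the take-out identity (the paper's Lemma~\ref{lemma:linear}, which you essentially re-derive by refining the partition with the level sets of $\|\zeta\|$). Your route has the advantage that every selection you manipulate admits a generalised conditional expectation by construction, so each application of take-out-what's-known is unambiguously legitimate; the cost is that you need a monotone convergence theorem for $\E^g$ that the appendix does not state (only dominated convergence is given there), though it follows routinely piece by piece on the witnessing $\cH$-partition since your sequence is bounded below by $\langle\zeta,\xi_1\rangle\in\Lnot[1_\cH](\R,\cF)$. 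One small point worth tightening: you never normalise to $0\in X$, so the meaning of $\E^g(h_X(\zeta)|\cH)$ for the possibly non-integrable, $[-\infty,\infty]$-valued $h_X(\zeta)$ is left implicit; it is pinned down by your lower bound, but stating the reduction to $X-\gamma$ at the outset, as the paper does, would make the left-hand side of the identity well-defined from the start.
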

\begin{proof}
  By passing from $X$ to $X-\gamma$ for $\gamma\in
  \Lnot[1_\cH](X,\cF)\neq\emptyset$, it is possible to assume that $X$
  contains the origin with probability one and work with the conventional
  conditional expectation.
  
  Each $\eta\in\Lnot(\E(X|\cH),\cH)$ is the almost sure limit of
  $\E(\xi_n|\cH)$ for $\xi_n\in\Lnot[1_\cH](X,\cF)$, $n\geq1$. 
  Then 
  \begin{displaymath}
    \langle\zeta,\eta\rangle=\lim \E(\langle\zeta,\xi_n\rangle|\cH)
    \leq \E(h_X(\zeta)|\cH).
  \end{displaymath}
  Thus, $h_{\E^g(X|\cH)}(\zeta)\leq \E^g(h_X(\zeta)|\cH)$ a.s.

  In the other direction, fix $\eps>0$ and $c>0$ and let 
  \begin{displaymath}
    Y=\{x\in\XX:\; \langle\zeta,x\rangle\geq h_X(\zeta)-\eps\}
    \cup\{x\in\XX:\; \langle \zeta,x\rangle\geq c\}.
  \end{displaymath}
  Then $X\cap Y$ is an almost surely non-empty random closed set,
  which possesses a selection $\xi$ such that 
  \begin{displaymath}
    \langle \zeta,\xi\rangle\geq \min(h_X(\zeta)-\eps,c).
  \end{displaymath}
  Passing to conditional expectations yields that 
  \begin{displaymath}
    \E(\min(h_X(\zeta)-\eps,c)|\cH)\leq
    \E(\langle\zeta,\xi\rangle|\cH)
    \leq h_{\E(X|\cH)}(\zeta).
  \end{displaymath}
  Since the support function of $X$ is non-negative, letting
  $c\uparrow\infty$ and $\eps\downarrow0$ concludes the proof.
\end{proof}

Note that Lemma~\ref{lemm:sup-cond} holds for the conventional
conditional expectations if $X$ is integrable and $\zeta\in
\Lnot[\infty](\XX^*,\cH)$, that is, the (strong) norm of $\zeta$ is
essentially bounded.

\begin{lemma}
  \label{LemmaUnionExpectation}
  Let $X$ be a random closed set such that
  $\Lnot[1_\cH](X,\cF)\neq\emptyset$, and let $X^n=X\cap B_n$,
  $n\geq1$. Then
  \begin{displaymath}
    \E^g(X|\cH)=\cl\bigcup_n\E^g(X^n|\cH)\quad \text{a.s.}
  \end{displaymath}
\end{lemma}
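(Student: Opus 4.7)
My plan is to establish both inclusions comprising the claimed equality.

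For $\cl\bigcup_n \E^g(X^n|\cH)\subseteq \E^g(X|\cH)$: the inclusion $X^n\subseteq X$ gives $\Lnot[1_\cH](X^n,\cF)\subseteq\Lnot[1_\cH](X,\cF)$, so comparing the defining identities \eqref{eq:8} for $X^n$ and $X$ yields $\Lnot(\E^g(X^n|\cH),\cH)\subseteq\Lnot(\E^g(X|\cH),\cH)$. Since $\E^g(X|\cH)$ is a random closed set, this translates to $\E^g(X^n|\cH)\subseteq \E^g(X|\cH)$ a.s.\ for each $n$, and the inclusion survives passage to the closed union.

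For the reverse inclusion, by \eqref{eq:8} it suffices to show $\E^g(\xi|\cH)\in\cl_0\bigcup_n\Lnot(\E^g(X^n|\cH),\cH)$ for every $\xi\in\Lnot[1_\cH](X,\cF)$. The plan is to reduce to the integrable setting via Lemma~\ref{Link-GExp-Exp}: $Z=X-\xi$ is integrable, contains $0$ a.s., and $\E^g(X|\cH)=\E(Z|\cH)+\E^g(\xi|\cH)$. For integrable $Z$ containing the origin, the truncation $\zeta^m=\zeta\one_{\|\zeta\|\le m}$ of any $\zeta\in\Lnot[1](Z,\cF)$ lies in $Z\cap B_m$ (because $0\in Z$) and converges to $\zeta$ in $L^1$, so dominated convergence for conditional expectations combined with Lemma~\ref{lemma:lo-closure} yields $\E(Z|\cH)=\cl\bigcup_m\E(Z\cap B_m|\cH)$. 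Using Lemma~\ref{Link-GExp-Exp} in reverse and noting that $(X\cap\bar B(\xi, m))-\xi=Z\cap B_m$,
\begin{equation*}
\E^g(X|\cH)=\cl\bigcup_m\E^g\big(X\cap \bar B(\xi, m)\,\big|\,\cH\big),
\end{equation*}
where $\bar B(\xi, m)$ denotes the closed ball of radius $m$ centred at $\xi$.

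It remains to replace the random centre $\xi$ by the origin; this is the main obstacle, since $X\cap\bar B(\xi, m)\not\subseteq X^n$ globally for any fixed $n$. My plan is to exploit the $\cH$-measurable partition $\{A_i\}$ underlying $\xi\in\Lnot[1_\cH]$, on each of which $\xi\one_{A_i}$ is integrable and hence $\P(A_i\cap\{\|\xi\|>k\})\to 0$ as $k\to\infty$. Any selection $\eta$ of $X\cap\bar B(\xi, m)$ satisfies $\|\eta\|\le\|\xi\|+m$, so $\eta\in X^n$ on $\{\|\xi\|\le n-m\}$; the $\cH$-decomposability of Theorem~\ref{ReprDecompL0} then allows splicing $\eta\one_{\{\|\xi\|\le n-m\}}$ with any measurable selection of $X^n$ on the complement (which is a.s.\ non-empty for sufficiently large $n$, since $\{X^n\ne\emptyset\}\uparrow\Omega$) to obtain elements of $\Lnot(X^n,\cF)$ whose generalised conditional expectations converge in probability to $\E^g(\eta|\cH)$ as $n\to\infty$. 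Taking the closed union over $n$ absorbs these approximating sequences and delivers the required inclusion.
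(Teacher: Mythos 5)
Your proof is correct and, at its core, takes the same route as the paper: reduce to an integrable set containing the origin by subtracting a selection $\xi\in\Lnot[1_\cH](X,\cF)$ (via Lemma~\ref{Link-GExp-Exp}), truncate selections by indicator multiplication, and pass to the limit using $\Lnot[1]$-convergence of conditional expectations together with Lemma~\ref{lemma:lo-closure}. Where you genuinely diverge is at the end. The paper's proof stops after the recentred truncation, i.e.\ it effectively establishes
\begin{displaymath}
  \E^g(X|\cH)=\cl\bigcup_m\E^g(X\cap \bar B(\xi,m)|\cH),
\end{displaymath}
and tacitly identifies the balls $\bar B(\xi,m)$ centred at the random point $\xi$ with the centred balls $B_n$ of the statement. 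You correctly flag this identification as a nontrivial step and supply the missing argument: splice a selection $\eta$ of $X\cap\bar B(\xi,m)$ with a bounded (hence integrable) selection of $X^n$ along $\{\|\xi\|\le n-m\}$, and use conditional dominated convergence (Lemma~\ref{Dominated-convergence-theorem}, with dominating variable $2\|\xi\|+2m\in\Lnot[1_\cH]$) to get $\E^g(\eta_n|\cH)\to\E^g(\eta|\cH)$ a.s.; closedness of $\Lnot(\cl\bigcup_n\E^g(X^n|\cH),\cH)$ then finishes the inclusion. This buys a proof of the lemma as actually stated rather than of its recentred variant. One caveat: your parenthetical claim that $X^n$ is a.s.\ non-empty for large $n$ does not follow from $\{X^n\ne\emptyset\}\uparrow\Omega$ (take $X=\{\zeta\}$ with $\zeta$ unbounded), and on $\{X^n=\emptyset\}$ the splice has nothing to attach to --- indeed there $\Lnot[1_\cH](X^n,\cF)=\emptyset$ and $\E^g(X^n|\cH)$ is not defined by \eqref{eq:8}. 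This is really a defect in the lemma's formulation, which the paper's own proof dodges only by silently replacing $B_n$ with balls around $\gamma$; your construction works verbatim on $\{X^n\ne\emptyset\}\supseteq\{\|\xi\|\le n\}$, which exhausts $\Omega$, so the issue is one of bookkeeping (restricting to the trace $\sigma$-algebra where $X^n$ is non-empty) rather than a flaw in your reasoning.
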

\begin{proof}
  By passing from $X$ to $X-\gamma$ for any $\gamma\in
  \Lnot[1_\cH](X,\cF)$, it is possible to assume that $0\in X$
  a.s., so that $X$ is integrable. 

  Denote the right-hand side by $Y$.  Note that $Y\subseteq
  \E(X|\cH)$.  To confirm the reverse inclusion, let $\xi=
  \E(\eta|\cH)$ for $\eta\in \Lnot[1](X,\cF)$. Then $\xi$ is the
  limit of $\E(\eta_n|\cH)$ in $\Lnot[1]$, where
  $\eta_n\eqdef\eta\one_{|\eta|\le n}\in \Lnot[1](X^n,\cF)$. Since
  $\E(\eta_n|\cH)\in \E(X^n|\cH)$ a.s., $\xi\in Y$ a.s. and the
  conclusion follows.
\end{proof}

\subsection{Sandwich theorem}
\label{sec:sandwich-theorem}\bigskip

Now we show that the (generalised) conditional expectation is
sandwiched between the conditional core and the conditional convex
hull. 

\begin{proposition}
  \label{prop:cond-exp-cm-CM}
  If $X$ is an a.s. non-empty random closed set, then
  \begin{equation}
    \label{eq:inclusion-cond-exp}
    \cm(X|\cH)\subseteq \E^g(X|\cH)\subseteq\CM(X|\cH)
    \quad \text{a.s.}
  \end{equation}
\end{proposition}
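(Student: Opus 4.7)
The plan is to treat the two inclusions separately, each reducing to a selection-level computation.

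For the first inclusion $\cm(X|\cH)\subseteq \E^g(X|\cH)$, I may assume $\cm(X|\cH)$ is a.s. non-empty, otherwise the inclusion is vacuous. By Lemma~\ref{H-meas-represent-m(X|H)exists}, any $\xi\in\Lnot(\cm(X|\cH),\cH)$ is an $\cH$-measurable selection of $X$. The partition $A_n=\{n-1\le\|\xi\|<n\}$ is $\cH$-measurable since $\xi$ is, and each $\xi\one_{A_n}$ is bounded hence integrable, so $\xi\in\Lnot[1_\cH](X,\cF)$; in particular the generalised conditional expectation $\E^g(X|\cH)$ is well defined. Moreover $\xi\one_{A_n}$ is itself $\cH$-measurable and integrable, so $\E(\xi\one_{A_n}|\cH)=\xi\one_{A_n}$, and the partition definition of the generalised expectation gives $\E^g(\xi|\cH)=\xi$. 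Therefore $\xi$ lies in the family on the right-hand side of~\eqref{eq:8} and hence is a selection of $\E^g(X|\cH)$. Taking a Castaing representation of $\cm(X|\cH)$ from $\Lnot(\cm(X|\cH),\cH)$ and using that $\E^g(X|\cH)$ is a.s. closed yields the first inclusion.

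For the second inclusion $\E^g(X|\cH)\subseteq\CM(X|\cH)$, I would argue at the level of support functions on $\cH$-measurable arguments. For any $\zeta\in\Lnot(\XX^*,\cH)$, Lemma~\ref{lemm:sup-cond} gives $h_{\E^g(X|\cH)}(\zeta)=\E^g(h_X(\zeta)|\cH)$, while Theorem~\ref{thr:exist-CM} gives $h_{\CM(X|\cH)}(\zeta)=\esssup h_X(\zeta)$. Since $\esssup h_X(\zeta)$ is $\cH$-measurable and dominates $h_X(\zeta)$ a.s., monotonicity of the generalised conditional expectation (applied piece-by-piece on an $\cH$-measurable partition) yields
\[
h_{\E^g(X|\cH)}(\zeta)=\E^g(h_X(\zeta)|\cH)\le \esssup h_X(\zeta)=h_{\CM(X|\cH)}(\zeta)\quad\text{a.s.}
\]
Now $\CM(X|\cH)$ is $\cH$-measurable and convex closed, so by Proposition~\ref{CountRepresentation} its polar admits a Castaing representation $\{\zeta_n,n\geq1\}\subset\Lnot(\XX^*,\cH)$, and the argument of Theorem~\ref{thr:countable} (after translating by an $\cH$-measurable selection) writes $\CM(X|\cH)$ as a countable intersection of $\cH$-measurable half-spaces indexed by these $\zeta_n$. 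The inequality above at each $\zeta_n$ shows every selection of $\E^g(X|\cH)$ lies in every such half-space, giving the desired inclusion; alternatively one invokes Corollary~\ref{cor:domination} once one notes that for $\cH$-measurable sets the supremum over $\cH$-measurable $\zeta$ suffices.

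The main obstacle is bookkeeping around the generalised conditional expectation: I must justify the linearity identity $\E^g(\xi|\cH)=\xi$ for $\cH$-measurable $\xi$ and the monotonicity $\E^g(\cdot|\cH)\le\esssup(\cdot)$ directly from the partition-based definition of $\E^g$, and I must ensure that Lemma~\ref{lemm:sup-cond} applies to genuinely random $\cH$-measurable $\zeta$ rather than deterministic arguments only. Once these technicalities are handled, the two inclusions follow almost immediately from the duality between conditional cores/hulls and essential infima/suprema of the support function already developed in Sections~\ref{sec:conditional-core} and~\ref{sec:cond-conv-hull}.
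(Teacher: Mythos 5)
Your proof is correct, and your first inclusion is essentially the paper's argument: every $\cH$-measurable selection $\xi$ of $\cm(X|\cH)$ satisfies $\E^g(\xi|\cH)=\xi$ and hence lies in $\E^g(X|\cH)$ (the paper states this in one line; your partition computation just makes it explicit). Where you genuinely diverge is the second inclusion. The paper's route is a two-line monotonicity argument: $X\subseteq\CM(X|\cH)$ implies $\E^g(X|\cH)\subseteq\E^g(\CM(X|\cH)|\cH)$, and the latter equals $\CM(X|\cH)$ by Corollary~\ref{Exp(X|H)=X}, since the conditional convex hull is $\cH$-measurable. You instead go through support functions: Lemma~\ref{lemm:sup-cond} to identify $h_{\E^g(X|\cH)}(\zeta)$ with $\E^g(h_X(\zeta)|\cH)$, the bound $\E^g(h_X(\zeta)|\cH)\le\esssup h_X(\zeta)=h_{\CM(X|\cH)}(\zeta)$ from Theorem~\ref{thr:exist-CM}, and then a domination argument via the countable half-space representation of Theorem~\ref{thr:countable}. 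This works, and your observation that for an $\cH$-measurable target set it suffices to test $\cH$-measurable $\zeta$ (because the Castaing representation of its polar can be taken $\cH$-measurable) is the right fix for the fact that Corollary~\ref{cor:domination} as stated quantifies over all of $\Lnot(\XX^*,\cF)$. The cost of your route is exactly the bookkeeping you flag: you must make sense of $\E^g(h_X(\zeta)|\cH)$ when $h_X(\zeta)$ may be $+\infty$ with positive probability and justify monotonicity there (trivial on $\{\esssup h_X(\zeta)=\infty\}$, and handled piecewise on an $\cH$-measurable partition elsewhere), none of which the paper's selection-level monotonicity argument requires. What your route buys is a dual, support-function picture of the sandwich that meshes directly with the representations \eqref{eq:13} and \eqref{eq:support-inequality}; what the paper's buys is brevity and freedom from any integrability or finiteness caveats beyond the existence of $\E^g(X|\cH)$ itself.
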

\begin{proof}
  The first inclusion is trivial, unless
  $\Lnot(X,\cH)\ne \emptyset$. Then each $\cH$-measurable selection
  $\xi$ of $\cm(X|\cH)$ satisfies $\xi=\E^g(\xi|\cH)$, whence the
  first inclusion holds. 

  For the second inclusion, note that $X\subset \CM(X|\cH)$, and
  \begin{displaymath}
    \E^g(\CM(X|\cH)|\cH)=\CM(X|\cH)
  \end{displaymath}
  by Corollary~\ref{Exp(X|H)=X}. 
\end{proof}

If $0\in X$ a.s., then Proposition~\ref{prop:CM-gamma} yields that
\begin{displaymath}
  \cm(X|\cH)\subseteq \E^g(X|\cH)\subseteq (\cm(X^o|\cH))^o, 
\end{displaymath}
whence
\begin{displaymath}
  \cm(X|\cH)\subset \Big(\E^g(X|\cH)\cap (\E^g(X^o|\cH))^o\Big). 
\end{displaymath}

Consider the family $\cQ$ of all probability measures $\Q$ absolutely
continuous with respect to $\P$.  The following result can be viewed
as an analogue of the representation of superlinear and sublinear
functions as suprema and infima of linear functions.

\begin{theorem}
  \label{thr-Comparison-m-M-E}
  Let $X$ be a random closed convex set.
  Then $\CM(X|\cH)$ (respectively, $\cm(X|\cH)$) is the smallest
  (respectively, largest) $\cH$-measurable random closed convex set
  a.s. containing $\E^g_\Q(X|\cH)$ for all $\Q\in\cQ$ such that the
  generalised conditional expectation exists.
\end{theorem}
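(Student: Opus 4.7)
The plan is to prove the statement for $\CM(X|\cH)$ in detail; the assertion for $\cm(X|\cH)$ follows by a completely dual argument, with $\essinf$ in place of $\esssup$, Theorem~\ref{thr:sf-cm} in place of Theorem~\ref{thr:exist-CM}, and the reverse inclusions throughout, or alternatively by polarisation via Proposition~\ref{prop:CM-gamma} after translating so that $0\in X$. I would organise the $\CM$ proof as two inclusions. The easy direction, $\E^g_\Q(X|\cH)\subseteq \CM(X|\cH)$ $\Q$-a.s. for every admissible $\Q\in\cQ$, uses that $\Q\ll\P$ transfers the $\P$-a.s. inclusion $X\subseteq \CM(X|\cH)$ to $\Q$-a.s., after which monotonicity of $\E^g_\Q(\cdot|\cH)$ together with Corollary~\ref{Exp(X|H)=X} applied under $\Q$ (an $\cH$-measurable convex closed set is its own generalised conditional expectation under any measure) yields $\E^g_\Q(X|\cH)\subseteq \E^g_\Q(\CM(X|\cH)|\cH)=\CM(X|\cH)$.

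For minimality, let $Y$ be an $\cH$-measurable random closed convex set containing $\E^g_\Q(X|\cH)$ $\Q$-a.s. for every admissible $\Q$. By Corollary~\ref{cor:domination} it suffices to verify $h_Y(\zeta)\geq h_{\CM(X|\cH)}(\zeta)$ for every $\zeta\in\Lnot(\XX^*,\cH)$. Lemma~\ref{lemm:sup-cond} converts the hypothesis into $h_Y(\zeta)\geq \E^g_\Q(h_X(\zeta)|\cH)$ $\Q$-a.s.\ for each such $\Q$, and Theorem~\ref{thr:exist-CM} identifies $h_{\CM(X|\cH)}(\zeta)$ with $\esssup h_X(\zeta)$ under $\P$. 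The theorem thus reduces to the scalar duality: if $g\in\Lnot(\R,\cF)$ and $\psi\in\Lnot(\R,\cH)$ satisfy $\psi\geq \E^g_\Q(g|\cH)$ $\Q$-a.s.\ for every $\Q\ll\P$ for which the latter is defined, then $\psi\geq\esssup g$ $\P$-a.s.

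Establishing this scalar duality is the main obstacle and the heart of the proof. I would argue by contradiction: if $m=\esssup g$ satisfies $\psi<m-\eps$ on some $\cH$-measurable set $A$ with $\P(A)>0$, then minimality of $m$ forces $B=A\cap\{g>m-\eps\}$ to have positive $\P$-measure, for otherwise $(m-\eps)\one_A+m\one_{A^c}$ would be a strictly smaller $\cH$-measurable majorant of $g$. Setting $d\Q/d\P=\one_B/\P(B)$ produces $\Q\in\cQ$ concentrated on $B$, and a Bayes-formula computation gives $\E_\Q(g|\cH)\geq m-\eps$ $\Q$-a.s., using that $\E_\P(\one_B|\cH)>0$ $\Q$-a.s.; this contradicts $\psi<m-\eps$ on $B\subseteq A$. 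The delicate point is that $g=h_X(\zeta)$ may be infinite, so $\E^g_\Q$ must be the genuinely generalised conditional expectation; I would handle this by first treating the integrably bounded case, then localising via $X^n=X\cap B_n$, and passing to the limit using Proposition~\ref{prop:union} on the $\CM$ side and Lemma~\ref{LemmaUnionExpectation} on the $\E^g_\Q$ side.
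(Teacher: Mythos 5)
Your strategy for the $\CM$ half coincides with the paper's. The scalar duality you isolate and prove via the density $d\Q=\one_B/\P(B)\,d\P$ is exactly Theorem~\ref{essCharact} of the appendix (whose proof uses the same construction), so you could simply cite it; the paper then assembles the conclusion through the countable half-space family $\{\zeta_n\}$ of \eqref{eq:cr}, showing $\CM(X|\cH)\subseteq\cl\co\bigcup_{n,m}\E^g_{\Q_{nm}}(X|\cH)$, whereas you go through Corollary~\ref{cor:domination}; these are interchangeable. Both you and the paper are equally loose about which $\Q$ are admissible when $h_X(\zeta)$ is unbounded, so your localisation via $X^n$ is not a defect relative to the published argument.

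The point that needs fixing is the $\cm$ half. It is not obtained by a ``completely dual'' argument with Theorem~\ref{thr:sf-cm} in place of Theorem~\ref{thr:exist-CM}: inequality \eqref{eq:support-inequality} can be strict, and $\essinf h_X(\cdot)$ need not be sublinear, so knowing $h_Z(\zeta)\leq\essinf h_X(\zeta)$ for all $\zeta\in\Lnot(\XX^*,\cH)$ does not by itself compare $Z$ with $\cm(X|\cH)$. The paper closes this by Proposition~\ref{prop:conjugate}: $h_{\cm(X|\cH)}$ is the \emph{largest} $\cH$-measurable lower semicontinuous sublinear function dominated by $\essinf h_X$, hence it dominates $h_Z$, and Corollary~\ref{cor:domination} finishes. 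Your alternative via polarisation and Proposition~\ref{prop:CM-gamma} is worse than a gap: it would require relating $(\E^g_\Q(X|\cH))^o$ to $\E^g_\Q(X^o|\cH)$, and the generalised conditional expectation does not commute with taking polars (already for a scalar $\xi>0$ one has $\E(1/\xi)\neq 1/\E\xi$), so the hypothesis on $Z$ does not transform into the hypothesis needed to apply the $\CM$ statement to $X^o$. Replace that alternative by the route through Proposition~\ref{prop:conjugate}.
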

\begin{proof}
  Consider the family $\{\zeta_n,n\geq1\}$ which yields $\CM(X|\cH)$
  by an analogue of \eqref{eq:cr}. By \eqref{eq:13} and
  Theorem~\ref{essCharact},
  \begin{displaymath}
    \E^g_{\Q_{nm}}(h_X(\zeta_n)|\cH)\uparrow 
    h_{\CM(X|\cH)}(\zeta_n)\quad \text{a.s. as }\; m\to\infty
  \end{displaymath}
  for a sequence $\{\Q_{nm},m\geq1\}\subset \cQ$. By Lemma \ref{lemm:sup-cond}, 
  \begin{align*}
    h_{\CM(X|\cH)}(\zeta_n)
    =h_{\bigcup_m \E^g_{\Q_{nm}}(X|\cH)}(\zeta_n)
    \le h_{\bigcup_{n,m} \E^g_{\Q_{nm}}(X|\cH)}(\zeta_n).
  \end{align*}
  In view of \eqref{eq:cr},
  \begin{displaymath}
    \CM(X|\cH)\subseteq\cl\co \bigcup_{n,m\geq1} \E^g_{\Q_{nm}}(X|\cH). 
  \end{displaymath}
  Proposition \ref{prop:cond-exp-cm-CM} yields the reverse inclusion, so
  that the equality holds. The union can be, equivalently, taken over
  all $\Q\in\cQ$, letting the generalised conditional expectation to
  be empty if $X$ does not contain any selection which does not admit
  the generalised conditional expectation under $\Q$.

  If $Z$ is an $\cH$-measurable subset of $\E^g_\Q(X|\cH)$ for all
  $\Q\in\cQ$, then
  \begin{displaymath}
    h_Z(\zeta)=\E^g_\Q(h_Z(\zeta)|\cH)\leq
    \E^g_\Q(h_X(\zeta)|\cH).
  \end{displaymath}
  By Theorem~\ref{essCharact}, $h_Z(\zeta)\leq \essinf h_X(\zeta)$ 
  for all $\zeta\in\Lnot(\XX^*,\cH)$.
  By Proposition~\ref{prop:conjugate}, 
  \begin{displaymath}
    h_Z(\zeta)\leq h_{\cm(X|\cH)}(\zeta).
  \end{displaymath}
  By Corollary~\ref{cor:domination},
  $\cm(X|\cH)\supset Z$.
\end{proof}

Following the idea of Theorem~\ref{thr-Comparison-m-M-E}, it is
possible to come up with a general way of constructing non-linear
set-valued expectations. If $\cM$ is a sub-family of $\cQ$, then a measurable
version of 
\begin{displaymath}
  \bigcap_{\Q\in\cM} \E_\Q(X|\cH)
\end{displaymath}
is a set-valued sublinear conditional expectation that satisfies
\eqref{eq:2} and a measurable version of
\begin{displaymath}
  \cl \bigcup_{\Q\in\cM} \E_\Q(X|\cH)
\end{displaymath}
satisfies \eqref{eq:3}. 

\begin{example}
  \label{ex:sv-avar}
  Assume that $\cM$ consists of all probability measures $\Q$ such
  that $d\Q/d\P\leq \alpha^{-1}$ for some $\alpha\in(0,1)$. Then the
  above formula provide set-valued sub- and superlinear analogues of
  the conditional Average Value-at-Risk, which is a well-known risk
  measure, see \cite{delb12,foel:sch04}.
\end{example}

\section{Polar sets and random cones}
\label{sec:random-cones}

If $X$ is a \emph{convex cone} in $\XX$, then $X^*=-X^o$ is called the
positive dual cone to $X$, so that
\begin{displaymath}
  X^*=\{u\in \XX^*:\; \langle u,x\rangle\ge 0\;\forall x\in X\}.
\end{displaymath}

\begin{proposition}
  \label{dual-CondMin-max}\label{lemma:exp-cone}
  Let $\bK$ be a random convex closed cone in $\XX$. Then both
  $\cm(\bK|\cH)$ and $\CM(\bK|\cH)$ are closed convex cones,
  $\cm(\bK|\cH)=\CM(\bK^*|\cH)^*$, and $\E(\bK|\cH)=\CM(\bK|\cH)$ a.s.
\end{proposition}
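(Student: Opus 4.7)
The plan is to handle the four assertions in sequence, leveraging the support-function representation of Theorem~\ref{thr:exist-CM}, the polar duality of Proposition~\ref{prop:CM-gamma}, and the bipolar theorem for closed convex cones.

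First, the claim that $\cm(\bK|\cH)$ is a random closed convex cone is immediate from Lemma~\ref{Exist-closed-case}. For $\CM(\bK|\cH)$, I would argue via the support function: by Theorem~\ref{thr:exist-CM},
$$h_{\CM(\bK|\cH)}(\zeta)=\esssup h_\bK(\zeta),\quad \zeta\in\Lnot(\XX^*,\cH).$$
Since $\bK$ is a.s.\ a cone, $h_\bK(\lambda\zeta)=\lambda h_\bK(\zeta)$ for $\lambda>0$, and this positive homogeneity is inherited by the essential supremum, so the support function identifies $\CM(\bK|\cH)$ as a closed convex cone. Equivalently, $\lambda\CM(\bK|\cH)$ is $\cH$-measurable, closed, convex, and contains $\lambda\bK=\bK$ for each $\lambda>0$, hence contains $\CM(\bK|\cH)$.

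For the duality $\cm(\bK|\cH)=\CM(\bK^*|\cH)^*$, I would apply Proposition~\ref{prop:CM-gamma} to the random convex cone $\bK^*$ with $\gamma=0$ (available since $0\in\bK^*$ a.s.), obtaining
$$\CM(\bK^*|\cH)=\bigl(\cm((\bK^*)^o|\cH)\bigr)^o.$$
The bipolar theorem for closed convex cones gives $(\bK^*)^o=-(\bK^*)^*=-\bK$, and Lemma~\ref{LemmCondminProduct}(ii) with $\lambda=-1$ yields $\cm(-\bK|\cH)=-\cm(\bK|\cH)$. The elementary identity $(-\cC)^o=\cC^*$ for a closed convex cone $\cC$ then converts this to $\CM(\bK^*|\cH)=\cm(\bK|\cH)^*$, and a further application of the cone bipolar theorem—legitimate by claim~(1)—delivers $\CM(\bK^*|\cH)^*=\cm(\bK|\cH)$.

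For $\E(\bK|\cH)=\CM(\bK|\cH)$ (read as $\E^g$, since $0\in\Lnot[1_\cH](\bK,\cF)$ makes the generalised expectation well-defined), the inclusion $\E^g(\bK|\cH)\subseteq\CM(\bK|\cH)$ is Proposition~\ref{prop:cond-exp-cm-CM}. For the reverse, I would apply Lemma~\ref{lemm:sup-cond}:
$$h_{\E^g(\bK|\cH)}(\zeta)=\E^g(h_\bK(\zeta)|\cH),\quad \zeta\in\Lnot(\XX^*,\cH).$$
Because $\bK$ is a cone, $h_\bK(\zeta)\in\{0,+\infty\}$ a.s., and unwinding the partition-based definition of $\E^g$ from the appendix shows that the generalised conditional expectation and the conditional essential supremum of such an integrand coincide, both equalling $(+\infty)\cdot\one_B$ where $B\in\cH$ is the minimal $\cH$-measurable cover of $\{h_\bK(\zeta)=+\infty\}$. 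Thus $h_{\E^g(\bK|\cH)}(\zeta)=h_{\CM(\bK|\cH)}(\zeta)$ for every $\zeta\in\Lnot(\XX^*,\cH)$, and Corollary~\ref{cor:domination} forces equality of the two sets. The main obstacle is precisely verifying this coincidence of $\E^g$ with $\esssup$ on $\{0,+\infty\}$-valued integrands; once that is secured, the polar manipulations of claim~(3) are routine bipolar algebra.
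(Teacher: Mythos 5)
Your argument is correct in substance but takes a genuinely different route from the paper at the two non-trivial claims. For the duality $\cm(\bK|\cH)=\CM(\bK^*|\cH)^*$, the paper argues directly from extremality: $\cm(\bK|\cH)^*$ is an $\cH$-measurable closed convex cone containing $\bK^*$, hence contains $\CM(\bK^*|\cH)$ by minimality of the conditional convex hull, while $\CM(\bK^*|\cH)^*$ is an $\cH$-measurable cone contained in $\bK=\bK^{**}$, hence contained in $\cm(\bK|\cH)$ by maximality of the core. Your detour through Proposition~\ref{prop:CM-gamma} and the identities $(\bK^*)^o=-\bK$ and $(-\cC)^o=\cC^*$ reaches the same conclusion, but tacitly invokes the $\XX^*$-valued version of that proposition; this is harmless (the statement already presupposes $\CM$ of a set in $\XX^*$), though the paper's version is more self-contained. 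For $\E(\bK|\cH)=\CM(\bK|\cH)$ the proofs genuinely diverge: the paper shows $\Lnot[1](\CM(\bK|\cH),\cH)\subseteq\Lnot[1](\E(\bK|\cH),\cH)$ by Hahn--Banach separation in $\Lnot[1]$, using that the separating functional $-\eta$ lies in $\E(\bK|\cH)^*\subseteq\cm(\bK|\cH)^*=\CM(\bK^*|\cH)$ and hence in $\bK^*$ a.s., which contradicts $c>0$; you instead exploit the $\{0,+\infty\}$ dichotomy of cone support functions together with Lemma~\ref{lemm:sup-cond} and Theorem~\ref{thr:exist-CM}. Your route is arguably more transparent and isolates the real phenomenon (that $\E^g$ and $\esssup$ coincide on $\{0,+\infty\}$-valued integrands), but two steps need tightening. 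First, $\E^g(h_\bK(\zeta)|\cH)$ is not covered by Definition~\ref{GeneralisedCondExp} when $\P\{h_\bK(\zeta)=+\infty\}>0$, so you must adopt the monotone-truncation convention implicit in the proof of Lemma~\ref{lemm:sup-cond}; under that convention your identification with $(+\infty)\one_B$, where $B=\{\P(h_\bK(\zeta)=+\infty\,|\,\cH)>0\}$ is the minimal $\cH$-measurable cover, is correct and indeed coincides with $\esssup h_\bK(\zeta)$. Second, Corollary~\ref{cor:domination} as stated requires agreement of support functions on all of $\Lnot(\XX^*,\cF)$, whereas you only have it on $\Lnot(\XX^*,\cH)$; this does suffice here because both $\E^g(\bK|\cH)$ and $\CM(\bK|\cH)$ are $\cH$-measurable and Theorem~\ref{thr:countable} applied with $\cH$ in place of $\cF$ represents each as an intersection of half-spaces with $\cH$-measurable normals, but that reduction should be stated explicitly.
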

\begin{proof}
  The conical properties of the core and the convex hull are
  obvious. Since $\cm(\bK|\cH)\subseteq \bK$,
  \begin{displaymath}
    \bK^*\subseteq \CM(\bK^*|\cH)\subseteq \cm(\bK|\cH)^*
  \end{displaymath}
  in view of the definition of $\CM(\bK^*|\cH)$. Therefore,
  $\cm(\bK|\cH)\subseteq \CM(\bK^*|\cH)^*$. The opposite inclusion
  follows from 
  \begin{displaymath}
    \CM(\bK^*|\cH)^*\subseteq \cm(\bK|\cH)\subseteq \bK
  \end{displaymath}
  by the definition of the conditional core.

  For the last statement, assume that
  $\gamma\in\Lnot[1](\CM(\bK|\cH),\cH)$ does not belong to
  $\Lnot[1](\E(\bK|\cH),\cH)$. By the Hahn-Banach separation theorem,
  there exist $\eta\in \Lnot[\infty](\XX^*,\cH)$ and $c\in\R$ such
  that
  \begin{displaymath}
    \E\langle \xi,\eta\rangle <c< \E\langle\gamma,\eta\rangle
  \end{displaymath}
  for all $\xi\in \Lnot[1](\E(\bK|\cH),\cH)$. Since
  $\Lnot[1](\E(\bK|\cH),\cH)$ is a cone, we have $c>0$ and
  $-\eta$ belongs to $\Lnot[1](\E(\bK|\cH)^*,\cH)$. Since
  $\cm(\bK|\cH)\subseteq \E(\bK|\cH)$,
  \begin{displaymath}
    \E(\bK|\cH)^*\subseteq \cm(\bK|\cH)^*=\CM(\bK^*|\cH)
  \end{displaymath}
  by Proposition~\ref{dual-CondMin-max}. Therefore, $-\eta\in \bK^*$
  a.s. Since $\gamma\in\bK$ a.s., $\E\langle \gamma,\eta\rangle\le 0$
  in contradiction with $c>0$.
  
  The opposite inclusion follows from
  Theorem~\ref{thr-Comparison-m-M-E}(i).
\end{proof}

Each random convex closed set $X$ in $\XX$ gives rise 
to a random convex cone $Y=\cone(X)$ in $\R_+\times \XX$ given by 
\begin{equation}
  \label{eq:10}
  \cone(X)=\{(t,tx):\; t\geq0,\; x\in X\}. 
\end{equation}
Note that $Y^*$ is not necessarily a subset of $\R_+\times\XX$ and so
cannot be represented by \eqref{eq:10}. 

\begin{proposition}
  \label{prop:perspective}
  If $Y=\cone(X)$ is given by \eqref{eq:10}, then
  $\cm(Y|\cH)=\cone(\cm(X|\cH))$ and, if $\CM(X|\cH)$ is a.s. bounded,
  then also $\CM(Y|\cH)=\cone(\CM(X|\cH))$.
\end{proposition}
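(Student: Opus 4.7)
The plan is to reduce both identities to ``slicing at $t=1$'' arguments, exploiting the cone structure of $Y=\cone(X)$.

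\textbf{Core identity.} First I would observe that $\cone(\cm(X|\cH))$ is $\cH$-measurable and contained in $Y$, so maximality of the core yields $\cone(\cm(X|\cH)) \subseteq \cm(Y|\cH)$. For the reverse, the cone part of Lemma~\ref{Exist-closed-case} gives that $\cm(Y|\cH)$ is itself a cone. I would then pick an arbitrary selection $(\tau,\eta) \in \Lnot(\cm(Y|\cH),\cH)$; since $(\tau,\eta) \in Y$, either $\tau=0$ (which forces $\eta=0$ from the definition of $\cone(X)$), or $\tau>0$. On $\{\tau>0\}$ the cone property gives $(1,\eta/\tau) = \tau^{-1}(\tau,\eta) \in \cm(Y|\cH)$, and the slice $W \eqdef \{x\in\XX : (1,x)\in\cm(Y|\cH)\}$ is an $\cH$-measurable random closed set with $W\subseteq X$, hence $W\subseteq \cm(X|\cH)$ by maximality. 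Thus $(\tau,\eta)=\tau(1,\eta/\tau)$ is a selection of $\cone(\cm(X|\cH))$, and a Castaing representation (Proposition~\ref{CountRepresentation}) upgrades this selection-level inclusion to a set inclusion.

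\textbf{Convex-hull identity, easy direction.} Assuming $\CM(X|\cH)$ is a.s.\ bounded, I would first check that $\cone(\CM(X|\cH))$ is closed in $\R_+\times\XX$: for a sequence $(t_n,t_n x_n)$ with $x_n\in\CM(X|\cH)$ converging to $(t,y)$, either $t>0$ and $x_n\to y/t\in\CM(X|\cH)$ by closedness, or $t=0$ and the boundedness of $\{x_n\}$ forces $y=0$. This convex closed $\cH$-measurable set contains $Y$, so minimality of $\CM(Y|\cH)$ yields $\CM(Y|\cH) \subseteq \cone(\CM(X|\cH))$.

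\textbf{Reverse hull inclusion and main obstacle.} To show $\cone(\CM(X|\cH))\subseteq\CM(Y|\cH)$, the plan is first to establish that $\CM(Y|\cH)$ is itself a cone. For each $\lambda>0$, the set $\lambda\CM(Y|\cH)$ is $\cH$-measurable, convex, closed, and contains $\lambda Y=Y$, so minimality gives $\CM(Y|\cH)\subseteq\lambda\CM(Y|\cH)$; applying this with $\lambda^{-1}$ yields the opposite inclusion, so $\lambda\CM(Y|\cH)=\CM(Y|\cH)$ for every $\lambda>0$, and $0\in\CM(Y|\cH)$ since $0\in Y$. Then the slice $V \eqdef \{x\in\XX : (1,x)\in\CM(Y|\cH)\}$ is an $\cH$-measurable random convex closed set containing $X$, hence $V\supseteq\CM(X|\cH)$ by minimality. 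The cone property of $\CM(Y|\cH)$ then gives $\cone(V)\subseteq\CM(Y|\cH)$, and composing yields $\cone(\CM(X|\cH))\subseteq\cone(V)\subseteq\CM(Y|\cH)$. The main obstacle is precisely the closedness of $\cone(\CM(X|\cH))$: without the a.s.\ boundedness hypothesis, recession directions of $\CM(X|\cH)$ produce limit points $(0,v)$ with $v\neq 0$ sitting in $\cl\cone(\CM(X|\cH))$ but not in $\cone(\CM(X|\cH))$ itself, so the identity would have to be stated modulo closure.
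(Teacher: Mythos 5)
Your proof is correct, and while the core identity follows essentially the paper's own route (normalise a selection $(\tau,\eta)$ of $\cm(Y|\cH)$ by its first coordinate and observe that $\eta/\tau$ is an $\cH$-measurable point of $X$), your treatment of the convex hull is genuinely different. The paper proves $\cone(\CM(X|\cH))\subseteq\CM(Y|\cH)$ by computing the support function of $Y$ (which takes only the values $0$ and $+\infty$), invoking the identity $\E(\bK|\cH)=\CM(\bK|\cH)$ for cones from Proposition~\ref{dual-CondMin-max} together with $h_{\CM(X|\cH)}=\esssup h_X$ from Theorem~\ref{thr:exist-CM}, and then closing the loop via the sandwich inclusion $\E^g(Y|\cH)\subseteq\CM(Y|\cH)$. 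You instead argue directly from minimality: the scaling trick $\lambda\CM(Y|\cH)=\CM(Y|\cH)$ shows the hull is a cone, and the slice $V=\{x:(1,x)\in\CM(Y|\cH)\}$ is an $\cH$-measurable convex closed set containing $X$, hence containing $\CM(X|\cH)$, whence $\cone(\CM(X|\cH))\subseteq\cone(V)\subseteq\CM(Y|\cH)$. Your argument is more elementary (no support functions, no conditional expectation of cones), and it makes explicit two points the paper passes over in silence: that the a.s.\ boundedness of $\CM(X|\cH)$ is exactly what makes $\cone(\CM(X|\cH))$ closed, so that the ``obvious'' inclusion $\CM(Y|\cH)\subseteq\cone(\CM(X|\cH))$ really does need that hypothesis (otherwise recession directions force one to pass to $\cl\cone(\CM(X|\cH))$), and that the degenerate first coordinate $\tau=0$ forces $\eta=0$. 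One small point you share with the paper: in the core identity, upgrading the selection-level inclusion to a set inclusion via a Castaing representation a priori only gives $\cm(Y|\cH)\subseteq\cl\cone(\cm(X|\cH))$, since $\cone(\cm(X|\cH))$ need not be closed; this is harmless because $\cm(Y|\cH)\subseteq\cone(X)$ excludes the extra limit points $(0,v)$ with $v\neq0$, but it is worth a sentence.
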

\begin{proof}
  By definition, $\cm(Y|\cH)\supset \cone(\cm(X|\cH))$, since the
  latter set is $\cH$-measurable. If $(\xi_0,\xi)$ is an
  $\cH$-measurable selection of $\cm(Y|\cH)$, then $\eta=\xi/\xi_0$
  a.s. belongs to $X$ and is $\cH$-measurable, whence
  $\eta\in\cm(X|\cH)$, and $(\xi_0,\xi)=\xi_0(1,\eta)$. 

  Obviously, $\CM(Y|\cH)\subset \cone(\CM(X|\cH))$. We show that
  $\E(Y|\cH)=\cone(\CM(X|\cH))$. The support function of $Y$ is given
  by 
  \begin{align*}
    h_Y((u_0,u))&=\sup\{tu_0+t\langle u,x\rangle:\; t\geq 0, x\in X\}\\
    &=
    \begin{cases}
      0, & \text{if }\; u_0+h_X(u)\leq 0,\\
      \infty & \text{otherwise}.
    \end{cases}
  \end{align*}
  Thus, $\E h_Y((u_0,u))=0$ if $u_0+h_X(u)\leq 0$ a.s. and is infinite
  otherwise. It suffices to note that $u_0+h_X(u)\leq 0$ a.s. if and
  only if $u_0\leq -\esssup h_X(u)$ and refer to
  Theorem~\ref{thr:exist-CM}.  
\end{proof}

\begin{example}[Random cone in $\R^2$]
  If finance, the random segment $X=[S^b,S^a]\subset\R_+$ models the
  bid-ask spread, and the positive dual cone to $Y=\cone(X)$ is called
  the solvency cone, see
  \cite{kab:saf09}. Proposition~\ref{prop:perspective} shows that 
  $\cm(Y|\cH)$ is the cone generated by $[\esssup S^b,\essinf S^a]$, while
  $\CM(Y|\cH)$ is generated by $[\essinf S^b,\esssup S^a]$. 
\end{example}

\renewcommand{\thesection}{Appendix A}
\renewcommand{\thetheorem}{A.\arabic{theorem}}
\renewcommand{\theequation}{A.\arabic{equation}}

\section{Conditional essential supremum}
\label{sec:appendix}

Let $\Xi\subseteq \Lnot(\R,\cF)$ be a (possibly uncountable) family of
real-valued $\cF$-measurable random variables and let $\cH$ be a
sub-$\sigma$-algebra of $\cF$. The following result is well known, see
e.g. \cite[Appendix~A.5]{foel:sch04} and further refinements in
\cite{bar:car:jen03} and \cite{kab:lep-1-13}. 

\begin{theorem} 
  For any family $\Xi$ of random variables, there exits a unique
  $\hat\xi\in \Lnot((-\infty,+\infty],\cH)$, denoted by $\esssup \Xi$
  and called the $\cH$-conditional supremum of $\Xi$, such that
  $\hat\xi \ge \xi$ a.s. for all $\xi\in\Xi$, and $\eta \ge \xi$
  a.s. for an $\eta\in \Lnot((-\infty,+\infty],\cH)$ and all
  $\xi\in\Xi$ implies $\eta \ge \hat\xi$ a.s.
\end{theorem}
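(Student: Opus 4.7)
The plan is to handle uniqueness trivially and reduce existence to a ``minimising sequence'' argument for a set of $\cH$-measurable upper bounds, after first reducing to the bounded case.

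Uniqueness is straightforward: if two candidates $\hat\xi_1,\hat\xi_2$ satisfy both required properties, then each is an $\cH$-measurable a.s.\ upper bound for $\Xi$, so the minimality clause applied twice gives $\hat\xi_1\geq \hat\xi_2$ a.s.\ and $\hat\xi_2\geq \hat\xi_1$ a.s.

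For existence, first I would apply a strictly increasing bounded bijection, e.g.\ $f(x)=\arctan(x)$ extended by $f(+\infty)=\pi/2$, to every element of $\Xi$. Since $f$ preserves the order and $\cH$-measurability in both directions, it suffices to produce the conditional essential supremum of the bounded family $f(\Xi)\subset \Lnot([-\pi/2,\pi/2],\cF)$ and then apply $f^{-1}$. I then introduce the family
\begin{displaymath}
  \mathcal{U}=\{\eta\in\Lnot([-\pi/2,\pi/2],\cH):\eta\geq f(\xi)\text{ a.s.\ for every }\xi\in\Xi\},
\end{displaymath}
which is non-empty (it contains the constant $\pi/2$) and, crucially, closed under finite pointwise minima, since $\eta_1\wedge\eta_2$ is still $\cH$-measurable and still dominates every $f(\xi)$.

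Next I would set $c=\inf_{\eta\in\mathcal{U}}\E[\eta]$ (which is finite in $[-\pi/2,\pi/2]$ because of boundedness) and pick a minimising sequence $\eta_n\in\mathcal{U}$ with $\E[\eta_n]\downarrow c$. Replacing $\eta_n$ by $\tilde\eta_n\eqdef\eta_1\wedge\cdots\wedge\eta_n$, stability of $\mathcal{U}$ under finite infima keeps $\tilde\eta_n\in\mathcal{U}$ and makes the sequence monotone decreasing. I define $\hat\eta\eqdef\lim_n\tilde\eta_n$, which is $\cH$-measurable and bounded, so dominated convergence gives $\E[\hat\eta]=c$; moreover the a.s.\ limit inequality shows $\hat\eta\geq f(\xi)$ a.s.\ for every $\xi\in\Xi$, i.e.\ $\hat\eta\in\mathcal{U}$.

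The main point, and where I expect the argument to require care, is minimality. Given any $\eta\in\mathcal{U}$, the element $\eta\wedge\hat\eta$ lies in $\mathcal{U}$ by the closure property, so $\E[\eta\wedge\hat\eta]\geq c=\E[\hat\eta]$; combined with $\eta\wedge\hat\eta\leq\hat\eta$ and the boundedness of both, this forces $\eta\wedge\hat\eta=\hat\eta$ a.s., i.e.\ $\eta\geq\hat\eta$ a.s. Setting $\hat\xi\eqdef f^{-1}(\hat\eta)\in \Lnot((-\infty,+\infty],\cH)$ and using that $f$ is strictly increasing and order-preserving, the required inequality $\hat\xi\geq\xi$ a.s.\ for all $\xi\in\Xi$ transfers back, and any $\cH$-measurable $\eta\in \Lnot((-\infty,+\infty],\cH)$ dominating $\Xi$ yields $f(\eta)\in\mathcal{U}$, hence $f(\eta)\geq\hat\eta$ a.s.\ and $\eta\geq\hat\xi$ a.s. The main obstacle throughout is the potential unboundedness of the members of $\Xi$, which is precisely what the bounded reduction circumvents so that expectations remain finite and the minimising-sequence trick works.
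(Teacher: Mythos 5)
Your proof is correct. Note that the paper does not actually prove this statement: it is quoted as a known result with references to F\"ollmer--Schied and Barron--Cardaliaguet--Jensen, so there is no in-paper argument to compare against. Your route differs slightly from the standard one in those references: the classical proof first replaces $\Xi$ by the upward-directed family of finite maxima, extracts a countable subfamily $\{\xi_n\}$ maximising $\E[f(\xi)]$, and sets $\hat\xi=\sup_n\xi_n$ (this has the side benefit of exhibiting $\esssup\Xi$ as a countable supremum from within $\Xi$, which the paper implicitly uses elsewhere, e.g.\ in Remark~\ref{SuppFunctionDirected} and Theorem~\ref{essCharact}). You instead work dually with the lattice of $\cH$-measurable upper bounds and a \emph{minimising} sequence for the expectation; this is cleaner for the conditional version because $\cH$-measurability of the candidate is automatic and no upward-directing of $\Xi$ is needed, at the cost of not producing the countable representation. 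All the individual steps check out: closure of $\mathcal U$ under finite minima, monotonisation of the minimising sequence, dominated convergence for the bounded transformed family, and the identity $\E[\eta\wedge\hat\eta]=\E[\hat\eta]$ with $\eta\wedge\hat\eta\le\hat\eta$ forcing $\eta\ge\hat\eta$ a.s. The only cosmetic point is the empty family: your construction then returns $\hat\eta\equiv-\pi/2$, i.e.\ $\hat\xi\equiv-\infty$, which falls outside $\Lnot((-\infty,+\infty],\cH)$; this is a defect of the theorem's own formulation rather than of your argument, and is resolved by assuming $\Xi\neq\emptyset$, in which case $\hat\eta\ge f(\xi)>-\pi/2$ a.s.\ for any fixed $\xi\in\Xi$ and $f^{-1}(\hat\eta)$ lands in the required space.
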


It is easy to 
verify the tower property
\begin{displaymath}
  \esssup[\cH']\esssup[\cH] \Xi
  =\esssup[\cH']\xi
\end{displaymath}
if $\cH'\subseteq \cH$.

Let $\cQ$ be the set of all probability measures $\Q$ absolutely
continuous with respect to $\P$. In the following, $\E_\Q$ designates
the expectation under $\Q$.

\begin{theorem}
  \label{essCharact}
  Let $\Xi\subseteq \Lnot(\R,\cF)$ and let $\cH$ be a
  sub-$\sigma$-algebra of $\cF$. If each $\xi\in\Xi$ is
  a.s. non-negative or its generalised conditional expectation
  $\E^g_\Q(\xi|\cH)$ exists for all $\Q\in \cQ$, then
  \begin{displaymath}
    \esssup \Xi=\esssup[\cF]\{\E^g_\Q(\xi|\cH), \xi\in\Xi, \Q\in \cQ\}\,.    
  \end{displaymath}
  Moreover, if $\Xi=\{\xi\}$ is a singleton, the family
  $\{\E^g_\Q(\xi|\cH), \Q\in \cQ\}$ is directed upward, and there
  exists a sequence $\Q_n\in \cQ$, $n\geq1$, such that
  $\E_{\Q_n}(\xi|\cH)\uparrow\esssup \Xi$ everywhere on $\Omega$.
\end{theorem}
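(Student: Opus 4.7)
The plan is to first establish the identity for a singleton $\Xi=\{\xi\}$, including the upward-directedness and the existence of a converging sequence, and then bootstrap to arbitrary $\Xi$. Write $\hat\xi=\esssup\xi$ for the $\cH$-conditional essential supremum. The direction $\esssup[\cF]\{\E^g_\Q(\xi|\cH):\Q\in\cQ\}\le\hat\xi$ is immediate from monotonicity of the generalised conditional expectation: since $\hat\xi$ is $\cH$-measurable and $\hat\xi\ge\xi$ a.s., $\E^g_\Q(\xi|\cH)\le\E^g_\Q(\hat\xi|\cH)=\hat\xi$ a.s. for every $\Q\in\cQ$. For the opposite inequality, for each $n\ge1$ I would choose an $\cH$-measurable $M_n$ with $M_n<\hat\xi$ and $M_n\uparrow\hat\xi$ a.s.\ (for instance $M_n=(\hat\xi-1/n)\wedge n$), and let $A_n=\{\xi>M_n\}$. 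The minimality of $\hat\xi$ forces $\P(A_n|\cH)>0$ a.s., for otherwise replacing $\hat\xi$ by $M_n$ on an $\cH$-set of positive measure would produce a strictly smaller $\cH$-measurable dominator of $\xi$. Then $Z_n=\one_{A_n}/\P(A_n|\cH)$ satisfies $\E(Z_n|\cH)=1$ a.s., so $\E Z_n=1$ and $Z_n$ is the Radon--Nikodym density of a measure $\Q_n\in\cQ$. A Bayes-type computation yields
\begin{equation*}
  \E^g_{\Q_n}(\xi|\cH)=\frac{\E^g(\xi\one_{A_n}|\cH)}{\P(A_n|\cH)}\ge M_n\quad\text{a.s.,}
\end{equation*}
so $\esssup[\cF]\{\E^g_\Q(\xi|\cH):\Q\in\cQ\}\ge M_n$ for every $n$ and hence $\ge\hat\xi$.

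To establish upward-directedness, given $\Q_1,\Q_2\in\cQ$ with densities $Z_1,Z_2$, I would set $A=\{\E^g_{\Q_1}(\xi|\cH)\ge\E^g_{\Q_2}(\xi|\cH)\}\in\cH$ and define
\begin{equation*}
  Z=\one_A\frac{Z_1}{\E(Z_1|\cH)}+\one_{A^c}\frac{Z_2}{\E(Z_2|\cH)}
\end{equation*}
on the supports of the respective conditional expectations (with standard conventions elsewhere). Since $A\in\cH$, taking the conditional expectation gives $\E(Z|\cH)=\one_A+\one_{A^c}=1$ a.s., so $Z$ is the density of some $\Q\in\cQ$; a second Bayes calculation then yields $\E^g_\Q(\xi|\cH)=\E^g_{\Q_1}(\xi|\cH)\vee\E^g_{\Q_2}(\xi|\cH)$, proving upward-directedness. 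Standard extraction from an upward-directed family then produces a sequence $\Q_n\in\cQ$ with $\E^g_{\Q_n}(\xi|\cH)\uparrow\hat\xi$ a.s., and redefining the chosen versions on a single $\P$-null set upgrades the convergence to everywhere on $\Omega$.

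For a general family $\Xi$, denote the right-hand side of the claimed identity by $R$. Each $\E^g_\Q(\xi|\cH)$ is $\cH$-measurable, so $R$ is $\cH$-measurable, and the inequality $R\le\esssup\Xi$ follows from the same monotonicity argument applied to each $\xi$. Conversely, the singleton case applied to each $\xi\in\Xi$ gives $R\ge\esssup[\cF]\{\E^g_\Q(\xi|\cH):\Q\in\cQ\}=\esssup\xi\ge\xi$ a.s.; thus $R$ is an $\cH$-measurable upper bound of $\Xi$ and $\esssup\Xi\le R$ by minimality. The main obstacle I expect is the Bayes identity in the generalised setting: one must verify that $\E^g(\xi\one_{A_n}|\cH)/\P(A_n|\cH)$ genuinely represents $\E^g_{\Q_n}(\xi|\cH)$ when $\xi\cdot Z_n$ is not integrable and when $\hat\xi$ can be infinite, and this is where the dichotomy in the hypothesis (either non-negativity of $\xi$ or existence of $\E^g_\Q(\xi|\cH)$ for every $\Q\in\cQ$) is used. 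Everything else is standard once this computation is licensed.
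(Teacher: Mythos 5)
Your proof is correct, and although it rests on the same underlying mechanism as the paper's --- re-weighting $\P$ by a density concentrated on a set where $\xi$ exceeds an $\cH$-measurable threshold --- the execution is a genuinely different route. The paper proves $\tilde\gamma\eqdef\esssup[\cF]\{\E^g_\Q(\xi|\cH)\}\ge\xi$ for the whole family at once, by contradiction: it puts the single measure $d\Q=\P(A)^{-1}\one_A\,d\P$ on the bad set $A=\{\tilde\gamma<\xi\}$ and derives a contradiction from $\Q(A^c)=0$. You instead reduce to singletons and argue constructively, with thresholds $M_n\uparrow\hat\xi$ and densities $Z_n=\one_{A_n}/\P(A_n|\cH)$. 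Your conditional (rather than global) normalisation is an improvement in one respect: since $\E(Z_n|\cH)=1$ a.s., the Bayes quotient $\E(\xi\one_{A_n}|\cH)/\P(A_n|\cH)$ is a canonically determined version of $\E_{\Q_n}(\xi|\cH)$ up to $\P$-null sets, which sidesteps the fact that $\E_{\Q_n}(\cdot|\cH)$ is a priori only fixed up to $\Q_n$-null (possibly $\P$-non-null) sets --- a point the paper's proof passes over silently; your construction also hands you the approximating sequence for the singleton case as a by-product. The directedness step is essentially identical to the paper's (glue the two densities along the $\cH$-set on which one conditional expectation dominates the other), again with conditional normalisation, and needs only the small convention you mention on $\{\E(Z_i|\cH)=0\}$, where $Z_i$ vanishes a.s.\ anyway. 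The one item you flag but do not verify --- the Bayes identity for the generalised conditional expectation --- is indeed where the hypothesis is consumed; it goes through by applying the classical Bayes formula on the pieces of an $\cH$-measurable partition on which $\P(A_n|\cH)$ is bounded below (so that $Z_n$ is bounded there) and $\xi\one_{A_i}$ is integrable, or by monotone convergence when $\xi\ge0$. That verification is routine but should be written out to make the argument complete.
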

\begin{proof}
  Since $\esssup \Xi\ge \xi$ for all $\xi\in\Xi$ and
  $\esssup \Xi$ is $\cH$-measurable, $\esssup \Xi\ge
  \E^g_\Q(\xi|\cH)$ for all $\Q\in \cQ$ and $\xi\in\Xi$. Therefore, 
  \begin{displaymath}
    \esssup \Xi\geq \esssup[\cF]\{\E_\Q(\xi|\cH),\xi\in\Xi,\Q\in \cQ\}
    =\tilde\gamma\,.  
  \end{displaymath}
  It remains to show that $\tilde\gamma\geq\xi$ a.s. for all $\xi\in
  \Xi$. This is trivial on the set
  $\{\tilde\gamma=\infty\}$. Therefore, we may assume without loss of
  generality that $\tilde\gamma<\infty$ a.s.  Assume that there exist
  a $\xi\in\Xi$ and a non-null set $A\in \cF$ such that
  $\tilde\gamma<\xi$ on $A$. Then $\tilde\gamma \one_{A}+\xi
  \one_{A^c}\le \xi$ and the inequality is strict on $A$. Let
  $d\Q=\alpha \one_{A}d\P$ for $\alpha=\P(A)^{-1}$, so that $\Q\in
  \cQ$. By definition of $\tilde\gamma$,
  \begin{displaymath}
    \E^g_\Q(\xi|\cH)\one_{A}+\xi \one_{A^c}\le \xi,
  \end{displaymath}
  and the inequality is strict on $A$. Taking the
  conditional expectation yields that
  \begin{displaymath}
    \E^g_\Q(\xi|\cH)\E^g_\Q(\one_{A}|\cH)+\E^g_\Q(\xi \one_{A^c}|\cH)
    \le \E_\Q(\xi|\cH),
  \end{displaymath}
  whence
  \begin{displaymath}
    \E^g_\Q(\xi \one_{A^c}|\cH)\le
    \E^g_\Q(\xi|\cH)\E^g_\Q(\one_{A^c}|\cH),
  \end{displaymath}
  and the inequality is strict on $A$. Indeed, the random variables in
  the inequality above take their values in $\R$ by assumption. We
  then obtain a contradiction, since $\Q(A^c)=0$.
  
  To show that the family $\{\E^g_\Q(\xi|\cH), \Q\in \cQ\}$ is directed
  upward, consider $\Q_1, \Q_2\in \cQ$ such that $d\Q_i=\alpha_id\P$,
  $i=1,2$. Define $\Q\in \cQ$ by letting $d\Q=c\alpha d\P$ with
  \begin{displaymath}
    \alpha= \alpha_1\one_{\E_{\Q_1}(\xi|\cH)\ge
      \E_{\Q_2}(\xi|\cH)}+\alpha_2\one_{\E_{\Q_1}(\xi|\cH)<
      \E_{\Q_2}(\xi|\cH)}
  \end{displaymath}
  and $c>0$ chosen such that $\Q(\Omega)=1$. Then 
  \begin{displaymath}
    \E^g_{\Q}(\xi\one_{H})=\E^g_{\P}(\alpha\xi\one_{H})\ge
    \E^g_{\Q}(\E^g_{\Q^i}(\xi|\cH)\one_{H})\quad  \text{a.s.}
  \end{displaymath}
  for every $H\in \cH$, whence $\E^g_\Q(\xi|\cH)\ge
  \E^g_{\Q_i}(\xi|\cH)$ a.s. for $i=1,2$. The conclusion follows.
\end{proof}

Similar definitions and results hold for the conditional essential infimum.

\renewcommand{\thesection}{Appendix B}
\renewcommand{\thetheorem}{B.\arabic{theorem}}
\renewcommand{\theequation}{B.\arabic{equation}}

\section{Generalised conditional expectation}
\label{sec:gener-cond-expect}

\begin{definition}
  \label{GeneralisedCondExp} 
  Let $\cH$ be a sub-$\sigma$-algebra of $\cF$. We say that the
  generalised conditional expectation of $\xi\in \Lnot(\XX,\cF)$
  exists if there exists an $\cH$-measurable partition
  $\{A_i, i\geq1\}$ such that $\xi\one_{A_i}$ is integrable for all
  $i\ge 1$. In this case, we say that $\xi\in \Lnot[1_\cH](\XX,\cF)$
  and let 
  \begin{equation}
    \label{eq:gep}
    \E^g(\xi|\cH)=\sum_{i=1}^\infty \E(\xi \one_{A_i}|\cH)\one_{A_i}.
  \end{equation}
\end{definition}

It is easy to see that the generalised conditional expectation does
not depend on the chosen partition. 

\begin{theorem} 
  \label{EquivalentDef-G-Cond-Exp} 
  We have $\xi\in\Lnot[1_\cH](\XX,\cF)$ if and only if
  $\xi\in\Lnot(\XX,\cF)$ with $\E(\|\xi\||\cH)<\infty$ a.s. For random
  variables $\xi\in \Lnot[1_\cH](\R,\cF)$, we have 
  $\E^g(\xi|\cH)=\E(\xi^+|\cH)-\E(\xi^-|\cH)$, where $\xi^+=\xi\vee 0$
  and $\xi^-=-(\xi\wedge 0)$.
\end{theorem}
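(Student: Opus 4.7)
The plan is to handle the two assertions separately, starting with the characterisation of $\Lnot[1_\cH](\XX,\cF)$ and then deriving the decomposition formula as a corollary of standard conditional-expectation manipulations applied coordinate-wise along the partition.

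For the forward direction of the equivalence, I would start from an $\cH$-measurable partition $\{A_i,i\geq1\}$ such that $\|\xi\|\one_{A_i}\in\Lnot[1](\R,\cF)$ for every $i$. Since $\one_{A_i}$ is $\cH$-measurable, the standard pull-out property gives
\begin{displaymath}
\E(\|\xi\|\mid\cH)\one_{A_i}=\E(\|\xi\|\one_{A_i}\mid\cH)<\infty\quad\text{a.s.}
\end{displaymath}
for every $i\geq1$, and because $\{A_i\}$ partitions $\Omega$ this forces $\E(\|\xi\|\mid\cH)<\infty$ a.s.

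For the reverse direction, assuming $\E(\|\xi\|\mid\cH)<\infty$ a.s., I would construct the partition explicitly by level sets: set
\begin{displaymath}
A_i\eqdef\{i-1\leq \E(\|\xi\|\mid\cH)<i\},\quad i\geq1,
\end{displaymath}
which is an $\cH$-measurable partition of $\Omega$ (modulo a null set, where one can extend arbitrarily). Then by the tower property,
\begin{displaymath}
\E\big[\|\xi\|\one_{A_i}\big]=\E\big[\E(\|\xi\|\mid\cH)\one_{A_i}\big]\leq i\,\P(A_i)<\infty,
\end{displaymath}
so each $\xi\one_{A_i}$ is integrable and hence $\xi\in\Lnot[1_\cH](\XX,\cF)$.

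For the second statement, let $\xi\in\Lnot[1_\cH](\R,\cF)$ with partition $\{A_i\}$. Since $\xi^\pm\one_{A_i}\leq |\xi|\one_{A_i}\in\Lnot[1](\R,\cF)$, both $\xi^\pm$ lie in $\Lnot[1_\cH](\R,\cF)$, and the classical identity $\E(\xi\one_{A_i}\mid\cH)=\E(\xi^+\one_{A_i}\mid\cH)-\E(\xi^-\one_{A_i}\mid\cH)$ can be summed against $\one_{A_i}$ to yield
\begin{displaymath}
\E^g(\xi\mid\cH)=\E^g(\xi^+\mid\cH)-\E^g(\xi^-\mid\cH).
\end{displaymath}
It then remains to observe that when the integrand is a.s. non-negative with $\E(\xi^\pm\mid\cH)<\infty$ a.s., the generalised conditional expectation agrees with the usual one: using $\cH$-measurability of $\one_{A_i}$ once more, $\E^g(\xi^+\mid\cH)\one_{A_j}=\E(\xi^+\one_{A_j}\mid\cH)=\E(\xi^+\mid\cH)\one_{A_j}$, and summing over $j$ gives $\E^g(\xi^+\mid\cH)=\E(\xi^+\mid\cH)$ a.s., and analogously for $\xi^-$. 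The only subtle point -- and the place worth attention -- is to ensure that the combined formula is unambiguous despite possibly infinite values of $\E(\xi^\pm\mid\cH)$; but the hypothesis $\E(|\xi|\mid\cH)<\infty$ a.s. (obtained from the first part) forces both conditional expectations to be a.s. finite, so the difference is well-defined. No single step is truly delicate; the main thing to get right is the bookkeeping between the partition-based definition of $\E^g$ and the pull-out property of conditional expectation.
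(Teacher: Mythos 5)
Your proposal is correct and follows essentially the same route as the paper's proof: the reverse implication via the level-set partition $\{i-1\le \E(\|\xi\|\,|\,\cH)<i\}$ together with the tower property, the forward implication via the pull-out property and a.s.\ finiteness of conditional expectations of integrable variables, and the decomposition $\E^g(\xi|\cH)=\E(\xi^+|\cH)-\E(\xi^-|\cH)$ obtained by splitting along the partition. The only cosmetic difference is that you indexed the level sets from $i-1$ rather than $n$, and you spell out the identification $\E^g(\xi^\pm|\cH)=\E(\xi^\pm|\cH)$ slightly more explicitly than the paper does.
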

\begin{proof}
  If $\E(\|\xi\| |\cH)<\infty$ a.s., define an $\cH$-measurable
  partition by letting
  \begin{displaymath}
    A_n=\{\omega:~\E(\|\xi\| |\cH)\in [n,n+1) \}\in\cH,\quad n\ge 0.
  \end{displaymath}
  Since 
  \begin{displaymath}
    \E(\|\xi\|\one_{A_n})=\E(\E(\|\xi\| |\cH)\one_{A_n})\le
    n+1,
  \end{displaymath}
  we have $\xi\one_{A_n}$ is integrable, and 
  \begin{displaymath}
    \E^g(\xi|\cH)=\sum_n \E(\xi\one_{A_n}|\cH)\one_{A_n}
  \end{displaymath}
  If $\xi$ is a random variable, then $\E(\xi^+|\cH)<\infty$ and
  $\E(\xi^-|\cH)<\infty$ a.s. and 
  \begin{align*}
  \E^g(\xi|\cH)&=\sum_n(\E(\xi^+|\cH)-\E(\xi^-|\cH))\one_{A_n}\\
    &=\E(\xi^+|\cH)-\E(\xi^-|\cH).
  \end{align*}

  Reciprocally, let $\xi\in\Lnot[1_\cH](\XX,\cF)$, i.e.  there exists
  an $\cH$-measurable partition $\{A_i,i\geq1\}$ such that
  $\|\xi\|\one_{A_i}$ is integrable for all $i\ge 1$. Then
  \begin{displaymath}
    \E(\|\xi\| |\cH)=\sum_n\E(\|\xi\|\one_{A_n} |\cH)\one_{A_n}.
  \end{displaymath}
  Since $\|\xi\|\one_{A_n}$ is integrable,
  $\E(\E(\|\xi\|\one_{A_n}|\cH))=\E(\|\xi\|\one_{A_n})<\infty$, so that
  $\E(\|\xi\|\one_{A_n}|\cH)<\infty$ a.s. and $\E(\|\xi\||\cH)<\infty$
  a.s.
\end{proof}

\begin{lemma} 
  \label{Decomposition-rv-CondExp}
  Random element $\xi\in\Lnot(\XX,\cF)$ admits a generalised
  conditional expectation if and only if $\xi=\gamma\tilde\xi$, where
  $\gamma\in\Lnot([1,\infty),\cH)$ and $\tilde\xi$ is integrable.
\end{lemma}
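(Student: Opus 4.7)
The plan is to use Theorem~\ref{EquivalentDef-G-Cond-Exp}, which identifies $\Lnot[1_\cH](\XX,\cF)$ with the family of $\xi$ satisfying $\E(\|\xi\||\cH)<\infty$ a.s., and then to construct $\gamma$ explicitly as a suitable $\cH$-measurable dominator of $\E(\|\xi\||\cH)$.

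For the ``if'' direction, suppose $\xi=\gamma\tilde\xi$ with $\gamma\in\Lnot([1,\infty),\cH)$ and $\tilde\xi\in\Lnot[1](\XX,\cF)$. Since $\gamma$ is $\cH$-measurable and nonnegative, pulling it out of the conditional expectation gives
\begin{displaymath}
  \E(\|\xi\||\cH)=\gamma\,\E(\|\tilde\xi\||\cH)\quad\text{a.s.,}
\end{displaymath}
which is finite almost surely because $\gamma<\infty$ a.s. and $\E(\|\tilde\xi\||\cH)<\infty$ a.s. by integrability of $\tilde\xi$. By Theorem~\ref{EquivalentDef-G-Cond-Exp}, $\xi\in\Lnot[1_\cH](\XX,\cF)$.

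For the ``only if'' direction, assume $\xi\in\Lnot[1_\cH](\XX,\cF)$. Again by Theorem~\ref{EquivalentDef-G-Cond-Exp}, we have $\E(\|\xi\||\cH)<\infty$ a.s. Define
\begin{displaymath}
  \gamma\eqdef 1+\E(\|\xi\||\cH),\qquad \tilde\xi\eqdef \gamma^{-1}\xi.
\end{displaymath}
Then $\gamma$ is $\cH$-measurable with values in $[1,\infty)$, and $\tilde\xi\in\Lnot(\XX,\cF)$ with $\xi=\gamma\tilde\xi$. Since $\gamma^{-1}$ is $\cH$-measurable, the tower property yields
\begin{displaymath}
  \E\|\tilde\xi\|=\E\bigl(\gamma^{-1}\|\xi\|\bigr)
  =\E\bigl(\gamma^{-1}\E(\|\xi\||\cH)\bigr)
  \leq \E\bigl(\gamma^{-1}\gamma\bigr)=1,
\end{displaymath}
so $\tilde\xi$ is integrable, which completes the decomposition.

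There is no real obstacle here; the only point requiring a little care is choosing $\gamma$ so that it simultaneously dominates $\E(\|\xi\||\cH)$ (to secure integrability of $\tilde\xi$) and stays bounded below by $1$ (to meet the requirement $\gamma\in\Lnot([1,\infty),\cH)$). Adding $1$ to the conditional expectation of $\|\xi\|$ achieves both at once.
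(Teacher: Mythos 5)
Your proof is correct and follows essentially the same route as the paper: the key construction $\gamma=1+\E(\|\xi\||\cH)$ with $\tilde\xi=\gamma^{-1}\xi$ is identical, and your converse direction merely routes through the characterisation in Theorem~\ref{EquivalentDef-G-Cond-Exp} where the paper instead exhibits the partition $A_i=\{\gamma\in[i,i+1)\}$ directly from the definition. Both variants of the converse are one-line arguments and equally valid.
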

\begin{proof} 
  Let $\xi$ admit a generalised conditional expectation. Define
  $\gamma\eqdef(1+\E(\|\xi\||\cH))$, so that $\E(\|\tilde \xi\| |\cH)\le
  1$. Hence, $0\le \E(\|\tilde \xi| )\le 1$ and $\tilde\xi$ is
  integrable. Reciprocally, if $\xi=\gamma\tilde\xi$ with
  $\gamma\in\Lnot([1,\infty),\cH)$ and integrable $\tilde\xi$,
  then $\xi\one_{A_i}$ is integrable for
  $A_i=\{\gamma\in[i,i+1)\}$, $i\ge 1$.
\end{proof}

\begin{lemma}
  \label{lemma:linear}
  If $\xi\in \Lnot[1_\cH](\XX,\cF)$ and $\zeta\in\Lnot(\XX^*,\cH)$,
  then 
  \begin{displaymath}
    \E^g(\langle\zeta,\xi\rangle|\cH)=\langle\zeta,\E^g(\xi|\cH)\rangle.
  \end{displaymath}
\end{lemma}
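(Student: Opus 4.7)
The plan is to reduce to the standard linearity property of (ordinary) conditional expectation applied on pieces of a common refinement of the $\cH$-measurable partitions associated to $\xi$ and to $\langle\zeta,\xi\rangle$.

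First I would verify that $\langle\zeta,\xi\rangle\in\Lnot[1_\cH](\R,\cF)$. By Theorem~\ref{EquivalentDef-G-Cond-Exp}, it suffices to check $\E(|\langle\zeta,\xi\rangle|\,|\cH)<\infty$ a.s. Since $\zeta\in\Lnot(\XX^*,\cH)$, the norm $\|\zeta\|$ is a.s.\ finite and $\cH$-measurable, and $|\langle\zeta,\xi\rangle|\le\|\zeta\|\,\|\xi\|$. Using the $\cH$-measurability of $\|\zeta\|$ together with $\E(\|\xi\|\,|\cH)<\infty$ a.s.\ (again by Theorem~\ref{EquivalentDef-G-Cond-Exp}), one gets $\E(|\langle\zeta,\xi\rangle|\,|\cH)\le\|\zeta\|\,\E(\|\xi\|\,|\cH)<\infty$ a.s.

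Next I would build a convenient common partition. Let $\{A_i,i\ge1\}\subset\cH$ be a partition on which $\xi\one_{A_i}$ is Bochner integrable, and set $B_j=\{\|\zeta\|\in[j-1,j)\}\in\cH$. On each $C_{ij}\eqdef A_i\cap B_j$ the random element $\xi\one_{C_{ij}}$ is integrable and $\zeta\one_{C_{ij}}$ is bounded by $j$, whence $\langle\zeta,\xi\rangle\one_{C_{ij}}$ is also integrable. Since the generalised conditional expectation does not depend on the choice of partition, I may compute both $\E^g(\xi|\cH)$ and $\E^g(\langle\zeta,\xi\rangle|\cH)$ from \eqref{eq:gep} using $\{C_{ij}\}$.

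The key step is then the identity
\begin{displaymath}
  \E\big(\langle\zeta,\xi\one_{C_{ij}}\rangle\,\big|\,\cH\big)
  =\langle\zeta,\E(\xi\one_{C_{ij}}|\cH)\rangle \quad\text{a.s.,}
\end{displaymath}
which is the standard pull-out property of Bochner conditional expectations for an $\cH$-measurable, essentially bounded $\XX^*$-valued coefficient and an integrable $\XX$-valued random element; it follows by approximating $\zeta\one_{C_{ij}}$ by $\cH$-simple $\XX^*$-valued functions and passing to the limit under dominated convergence. Summing this identity over $i,j$ against $\one_{C_{ij}}$ gives
\begin{displaymath}
  \E^g(\langle\zeta,\xi\rangle|\cH)
  =\sum_{i,j}\langle\zeta,\E(\xi\one_{C_{ij}}|\cH)\rangle\one_{C_{ij}}
  =\Big\langle\zeta,\sum_{i,j}\E(\xi\one_{C_{ij}}|\cH)\one_{C_{ij}}\Big\rangle
  =\langle\zeta,\E^g(\xi|\cH)\rangle,
\end{displaymath}
where in the last equality I again invoke the partition-independence from Definition~\ref{GeneralisedCondExp}.

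The main obstacle is the pull-out identity for the Bochner conditional expectation with a random, merely $\cH$-measurable $\XX^*$-valued coefficient; after localising to $C_{ij}$, however, the coefficient is bounded and the result reduces to the standard fact for simple $\cH$-measurable coefficients.
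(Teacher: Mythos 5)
Your proposal is correct and follows essentially the same route as the paper: the paper's (very terse) proof likewise refines the partition $\{A_n\}$ from \eqref{eq:gep} by the level sets $\{\|\zeta\|\in[m,m+1)\}$ and applies the linearity (pull-out) property of the ordinary conditional expectation on each piece. You have merely supplied the details the paper leaves implicit, including the verification that $\langle\zeta,\xi\rangle\in\Lnot[1_\cH](\R,\cF)$ and the partition-independence used to reassemble the sum.
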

\begin{proof}
  If $\{A_n,n\geq1\}$ is the partition from \eqref{eq:gep}, then the
  statement follows by partitioning $\Omega$ with
  $A_{nm}=A_n\cap\{\|\zeta\|\in[m,m+1)\}$ and using the linearity
  property of the conditional expectation. 
\end{proof}

In view of Lemma~\ref{EquivalentDef-G-Cond-Exp}, let
$\Lnot[p_\cH](\XX,\cF)$ with $p\in[1,\infty]$ be the family of
$\xi\in\Lnot(\XX,\cF)$ such that $\E(\|\xi\|^p|\cH)<\infty$ a.s. if
$p\in[1,\infty)$ and $\esssup\|\xi\|<\infty$ a.s. if $p=\infty$.

\begin{lemma}[Dominated convergence]
  \label{Dominated-convergence-theorem} 
  Let $\{\xi_n, n\ge 1\}$ be a sequence from $\Lnot[p_\cH](\XX,\cF)$
  with $p\in[1,\infty)$ which converges a.s. to
  $\xi\in\Lnot(\XX,\cF)$. If $\|\xi_n\|\le\gamma$ a.s. for some
  $\gamma\in\Lnot[p_\cH](\R_+,\cF)$ and all $n$, then $\xi_n\to\xi$ in
  $\Lnot[p_\cH](\XX,\cF)$.
\end{lemma}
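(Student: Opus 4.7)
The idea is to localise on an $\cH$-measurable partition chosen so that, piece by piece, the statement reduces to the classical (conditional) dominated convergence theorem. Throughout, convergence in $\Lnot[p_\cH](\XX,\cF)$ is interpreted as the a.s.\ convergence of the generalised conditional $p$-th moment $\E^g(\|\xi_n-\xi\|^p|\cH)\to 0$.

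First I would use the assumption $\gamma\in\Lnot[p_\cH](\R_+,\cF)$, which by definition means $\E(\gamma^p|\cH)<\infty$ a.s. Following the device in the proof of Theorem~\ref{EquivalentDef-G-Cond-Exp}, I would define an $\cH$-measurable partition $\{A_k,k\ge 1\}$ of $\Omega$ by
\begin{displaymath}
A_k\eqdef\{\E(\gamma^p|\cH)\in[k-1,k)\},\qquad k\ge 1.
\end{displaymath}
On each $A_k$ one has $\E(\gamma^p\one_{A_k})=\E(\E(\gamma^p|\cH)\one_{A_k})\leq k$, so $\gamma^p\one_{A_k}$ is (genuinely) integrable. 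Then since $\|\xi_n-\xi\|\le 2\gamma$ a.s., the random variables $\|\xi_n-\xi\|^p\one_{A_k}$ are dominated by the integrable $(2\gamma)^p\one_{A_k}$.

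Next, fix $k$. Since $\xi_n\to\xi$ a.s., we have $\|\xi_n-\xi\|^p\one_{A_k}\to 0$ a.s., so the classical conditional dominated convergence theorem yields
\begin{displaymath}
\E(\|\xi_n-\xi\|^p\one_{A_k}|\cH)\longrightarrow 0\qquad \text{a.s.}
\end{displaymath}
Because $A_k\in\cH$ and $\|\xi_n-\xi\|^p\le (2\gamma)^p$ lies in $\Lnot[1_\cH](\R_+,\cF)$ (Theorem~\ref{EquivalentDef-G-Cond-Exp}), the generalised conditional expectation $\E^g(\|\xi_n-\xi\|^p|\cH)$ is well defined, and by \eqref{eq:gep} it satisfies
\begin{displaymath}
\one_{A_k}\E^g(\|\xi_n-\xi\|^p|\cH)=\E(\|\xi_n-\xi\|^p\one_{A_k}|\cH).
\end{displaymath}
Running this over all $k\ge 1$ and using that $\{A_k,k\ge 1\}$ partitions $\Omega$ gives $\E^g(\|\xi_n-\xi\|^p|\cH)\to 0$ a.s., which is the claimed convergence.

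The only real obstacle is the bookkeeping step: verifying that splitting by $\{A_k\}$ correctly interchanges with the generalised conditional expectation, so that a.s.\ convergence on each piece patches together to a.s.\ convergence on $\Omega$. This is routine once one invokes the identity above from \eqref{eq:gep} and the fact that the partition is $\cH$-measurable; everything else is a direct appeal to classical conditional dominated convergence applied on each $A_k$.
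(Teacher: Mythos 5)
Your proof is correct and follows essentially the same route as the paper's: partition $\Omega$ into $\cH$-measurable pieces on which the dominating function becomes genuinely $p$-integrable (the paper simply posits such a partition, while you construct it explicitly via $\{\E(\gamma^p|\cH)\in[k-1,k)\}$ as in Theorem~\ref{EquivalentDef-G-Cond-Exp}), apply the classical conditional dominated convergence theorem on each piece using the bound $\|\xi_n-\xi\|\le 2\gamma$, and patch the pieces back together through \eqref{eq:gep}. No gaps; your version is just slightly more explicit about the construction of the partition and the patching step.
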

\begin{proof} 
  Consider a partition $\{A_i,i\geq1\}$ of elements from $\cH$ such
  that $\gamma\one_{A_i}\in\Lnot[p](\XX,\cF)$ for all $i\ge 1$. Observe
  that $\|\xi\|\le\gamma$ a.s. Applying the conditional dominated
  convergence theorem for integrable random variables, we obtain that
  $\E(\|\xi_n-\xi\|^p\one_{A_i}|\cH)\to 0$ for all $i\ge 1$. Hence,
  $\E(\|\xi_n-\xi\|^p|\cH)\to 0$ as $n\to\infty$.
\end{proof}

\begin{lemma}
  \label{denseL^p} 
  The set $\Lnot[p](\XX,\cF)$ is dense in
  $\Lnot[p_{\cH}](\XX,\cF)$ for all $p\in[1,\infty]$.
\end{lemma}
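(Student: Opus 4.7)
The plan is to approximate any $\xi\in\Lnot[p_{\cH}](\XX,\cF)$ by truncating on $\cH$-measurable level sets of the conditional $p$-norm. The underlying idea is exactly parallel to the construction used in Definition~\ref{GeneralisedCondExp} and in Theorem~\ref{EquivalentDef-G-Cond-Exp}: although $\E\|\xi\|^p$ may be infinite, the conditional quantity $\E(\|\xi\|^p|\cH)$ is finite a.s., and restricting attention to $\cH$-measurable sets where it is bounded produces genuinely $p$-integrable approximants.

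For $p\in[1,\infty)$, given $\xi\in\Lnot[p_{\cH}](\XX,\cF)$, I would set
\begin{displaymath}
  A_n\eqdef\{\omega:\;\E(\|\xi\|^p|\cH)\le n\}\in\cH,\qquad n\ge1,
\end{displaymath}
and define $\xi_n\eqdef\xi\one_{A_n}$. The $\cH$-measurability of $A_n$ combined with the tower property gives
\begin{displaymath}
  \E\|\xi_n\|^p=\E\big(\one_{A_n}\E(\|\xi\|^p|\cH)\big)\le n<\infty,
\end{displaymath}
so $\xi_n\in\Lnot[p](\XX,\cF)$. Because $\E(\|\xi\|^p|\cH)<\infty$ a.s., one has $\one_{A_n}\uparrow 1$ a.s., so that $\xi_n\to\xi$ a.s., while
\begin{displaymath}
  \E(\|\xi_n-\xi\|^p|\cH)=\one_{A_n^c}\E(\|\xi\|^p|\cH)\downarrow0\quad\text{a.s.,}
\end{displaymath}
which is the notion of convergence in $\Lnot[p_{\cH}]$ used implicitly in Lemma~\ref{Dominated-convergence-theorem}.

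For $p=\infty$, the same scheme works with the sets
\begin{displaymath}
  A_n\eqdef\{\omega:\;\esssup[\cH]\|\xi\|\le n\}\in\cH,
\end{displaymath}
and $\xi_n\eqdef\xi\one_{A_n}$. By definition of the conditional essential supremum, $\|\xi\|\le\esssup[\cH]\|\xi\|$ a.s., so $\|\xi_n\|\le n$ a.s. and $\xi_n\in\Lnot[\infty](\XX,\cF)$; since $\esssup[\cH]\|\xi\|<\infty$ a.s., again $\one_{A_n}\uparrow 1$ a.s., and $\esssup[\cH]\|\xi-\xi_n\|=\one_{A_n^c}\esssup[\cH]\|\xi\|\downarrow0$ a.s.

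The only delicate point is matching the notion of convergence in $\Lnot[p_{\cH}]$: the sets $A_n$ are manifestly $\cH$-measurable, which is what is needed to pull indicators in and out of the conditional expectation, so no difficulty arises. The proof is therefore essentially a one-paragraph truncation argument; the main work is in recognising that the conditional version of $L^p$ permits truncation by an $\cH$-measurable scalar rather than a constant.
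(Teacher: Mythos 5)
Your proof is correct and is essentially the paper's argument: the paper factors $\xi=\gamma\tilde\xi$ via Lemma~\ref{Decomposition-rv-CondExp} and truncates on $\{\gamma\le n\}$, which is exactly your truncation on the $\cH$-measurable level sets of the conditional $p$-norm, and then invokes Lemma~\ref{Dominated-convergence-theorem} where you compute the convergence directly. No substantive difference.
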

\begin{proof}
  Let $p\in[1,\infty)$. Consider $\xi\in\Lnot[p_{\cH}](\XX,\cF)$. By
  Lemma~\ref{Decomposition-rv-CondExp}, $\xi=\gamma \tilde\xi$ where
  $\gamma\in \Lnot([1,\infty),\cH)$ and $\tilde
  \xi\in\Lnot[p](\XX,\cF)$. Define
  $\xi_n\eqdef\gamma\one_{\|\gamma\|\le n}\tilde\xi\in
  \Lnot[p](\XX,\cF)$. By Lemma~\ref{Dominated-convergence-theorem},
  $\xi_n\to \xi$ in $\Lnot[p_{\cH}](\XX,\cF)$.  For $p=\infty$, a
  similar argument applies with $\xi_n\eqdef\xi$ if
  $\esssup[\cH]\|\xi\|\le n$ and $\xi_n\eqdef0$ otherwise.
\end{proof}

\begin{lemma}
  \label{Extraction-subsequence} 
  Let $\{\xi_n, n\geq1\}$ be a sequence from $\Lnot[p_{\cH}](\XX,\cF)$
  which converges to $\xi$ in $\Lnot[p_{\cH}](\XX,\cF)$. Then there
  exists a random $\cH$-measurable sequence $\{n_k, k\ge 1\}$ of
  $\cH$-measurable natural numbers, such that
  $\xi_{n_{k}}\in\Lnot[p_{\cH}](\XX,\cF)$ and $\xi_{n_k}\to \xi$ a.s.
\end{lemma}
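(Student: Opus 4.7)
The plan is to adapt the classical argument that $L^p$-convergence admits an a.s.\ convergent subsequence, but now choosing the indices as $\cH$-measurable random natural numbers rather than deterministic ones. First treat $p\in[1,\infty)$: convergence of $\xi_n$ to $\xi$ in $\Lnot[p_\cH](\XX,\cF)$ means $\eta_n\eqdef \E(\|\xi_n-\xi\|^p|\cH)\to 0$ almost surely, in the same sense used throughout Lemma~\ref{Dominated-convergence-theorem} and Lemma~\ref{denseL^p}. Define recursively the random indices
\[
n_1\eqdef\inf\{n\ge 1:\eta_n\le 2^{-1}\},\qquad n_{k+1}\eqdef\inf\{n>n_k:\eta_n\le 2^{-(k+1)}\}.
\]
Each $n_k$ is a.s.\ finite because $\eta_n\to 0$ a.s., and $\cH$-measurable because $\{n_k=j\}$ is a boolean combination of the $\cH$-measurable events $\{n_{k-1}=i\}$ and $\{\eta_\ell\le 2^{-k}\}$ for $\ell,i\le j$. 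The random element $\xi_{n_k}\eqdef\sum_{j\ge 1}\xi_j\one_{\{n_k=j\}}$ is then $\cF$-measurable.

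The key identity, obtained by pulling the $\cH$-measurable indicators out of the conditional expectation, is
\[
\E(\|\xi_{n_k}-\xi\|^p|\cH)=\sum_{j\ge 1}\one_{\{n_k=j\}}\,\eta_j\le 2^{-k}\quad\text{a.s.},
\]
by the very definition of $n_k$. Combined with $\|\xi_{n_k}\|^p\le 2^{p-1}(\|\xi\|^p+\|\xi_{n_k}-\xi\|^p)$ and the observation that $\xi\in\Lnot[p_\cH](\XX,\cF)$ (itself an immediate consequence of $\xi_n\to\xi$ in $\Lnot[p_\cH]$ via the triangle inequality), this shows $\xi_{n_k}\in\Lnot[p_\cH](\XX,\cF)$. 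Summing the above estimate in $k$ and applying the conditional Tonelli theorem,
\[
\E\Big(\sum_{k\ge 1}\|\xi_{n_k}-\xi\|^p\,\Big|\,\cH\Big)\le\sum_{k\ge 1}2^{-k}=1<\infty\quad\text{a.s.},
\]
so $\sum_k\|\xi_{n_k}-\xi\|^p<\infty$ a.s., and therefore $\xi_{n_k}\to\xi$ a.s.

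The case $p=\infty$ requires only a trivial modification: replace $\eta_n$ by $\esssup[\cH]\|\xi_n-\xi\|$, which tends to $0$ a.s.\ by the definition of $\Lnot[\infty_\cH]$-convergence, and use the same recursive construction; then $\|\xi_{n_k}-\xi\|\le\eta_{n_k}\le 2^{-k}$ a.s.\ directly, yielding a.s.\ convergence with no summation step. The main obstacle I anticipate is not analytical but bookkeeping: verifying the $\cH$-measurability of the random indices $n_k$ and justifying the interchange $\E(f(\xi_{n_k})|\cH)=\sum_j\one_{\{n_k=j\}}\E(f(\xi_j)|\cH)$ for $f\ge 0$ and $\cH$-measurable $n_k$. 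This is the same random-switching principle exploited in the proof of Proposition~\ref{prop-open+decomposable} and implicitly throughout Section~\ref{sec:infin-decomp-1}.
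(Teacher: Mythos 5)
Your proof is correct and follows essentially the same route as the paper: both define the random indices $n_k$ as $\cH$-measurable first-entrance times of $\E(\|\xi_n-\xi\|^p|\cH)$ below a geometric threshold, pull the $\cH$-measurable indicators $\one_{\{n_k=j\}}$ through the conditional expectation to get $\E(\|\xi_{n_k}-\xi\|^p|\cH)\le 2^{-k}$ (the paper uses $2^{-pk}$ and additionally requires the threshold to hold for all $i\ge n$, which is inessential), and deduce membership in $\Lnot[p_\cH]$ and a.s.\ convergence. If anything, your direct summation $\sum_k\E(\|\xi_{n_k}-\xi\|^p|\cH)<\infty$ is slightly cleaner than the paper's detour through unconditional $\Lnot[p]$-convergence ``and almost surely for a subsequence,'' and your explicit treatment of $p=\infty$ is a small completeness bonus.
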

\begin{proof} 
  Since $\xi_n\to\xi$ in $\Lnot[p_{\cH}](\XX,\cF)$, we deduce that
  $\E(\|\xi_m-\xi\|^p|\cH)\to0$ a.s. as $m\to\infty$. Define the
  $\cH$-measurable sequence $\{n_k, k\ge 1\}$ of natural numbers by
  \begin{align*} 
    n_1&\eqdef\inf\{n:~\E(\|\xi_i-\xi\|^p|\cH)\le 2^{-p},
    \,\text{ for\,all\,}i\ge n\},\\
    n_{k+1}&\eqdef\inf\{n> n_k:~\E(\|\xi_i-\xi\|^p|\cH)\le
    2^{-p(k+1)},
    \,\text{ for\,all\,}i\ge n\}.
  \end{align*}
  Since $\E(\|\xi_{n_{k}}-\xi\|^p|\cH)\le 2^{-pk}$, we deduce that
  $\E(\|\xi_{n_{k}}-\xi\|^p)\le 2^{-pk}$. Therefore,
  $(\xi_{n_{k}}-\xi)\to0$ in $\Lnot[p](\XX,\cF)$ and almost surely
  for a subsequence. Observe that
  \begin{displaymath}
    \E(\|\xi_{n_{k}}\||\cH)=\sum_{j\ge k}\E(\|\xi_{j}\||\cH)\one_{n_{k}=j}
    <\infty,
  \end{displaymath}
  so that $\xi_{n_{k}}\in\Lnot[p_{\cH}](\XX,\cF)$. The conclusion
  follows.
\end{proof}

\end{document}